\newcommand{\zblue}[1]{#1}
\newcommand{\cF}{\mathcal{F}}
\newcommand{\cG}{\mathcal{G}}
\newcommand{\tbx}{\tilde\bx}
\newcommand{\del}[1]{}
\theoremstyle{plain}
\newtheorem{thm}{\protect\theoremname}
\theoremstyle{definition}
\newtheorem{defn}[thm]{\protect\definitionname}
\theoremstyle{plain}
\newtheorem{lem}[thm]{\protect\lemmaname}
\theoremstyle{plain}
\newtheorem{prop}[thm]{\protect\proppositionname}
\theoremstyle{plain}
\newtheorem{cor}[thm]{\protect\corollaryname}
\theoremstyle{plain}
\newtheorem{conj}[thm]{\protect\conjecturename}
\theoremstyle{definition}
\newtheorem*{problem*}{\protect\problemname}
\theoremstyle{remark}
\newtheorem{rem}[thm]{\protect\remarkname}
\providecommand{\definitionname}{Definition}
\providecommand{\lemmaname}{Lemma}
\providecommand{\proppositionname}{Proposition}
\providecommand{\problemname}{Problem}
\providecommand{\theoremname}{Theorem}
\providecommand{\remarkname}{Remark}
\providecommand{\conjecturename}{Conjecture}
\providecommand{\corollaryname}{Corollary}
\begin{document}
\global\long\def\be{\mathbf{e}}%
\global\long\def\bf{\mathbf{f}}%
\global\long\def\bx{\mathbf{x}}%
\global\long\def\by{\mathbf{y}}%
\global\long\def\ba{\mathbf{a}}%
\global\long\def\bq{\mathbf{q}}%
\global\long\def\bp{\mathbf{p}}%
\global\long\def\bb{\mathbf{b}}%
\global\long\def\bv{\mathbf{v}}%
\global\long\def\bq{\mathbf{q}}%
\global\long\def\bs{\mathbf{s}}%
\global\long\def\bw{\mathbf{w}}%
\global\long\def\br{\mathbf{r}}%
\global\long\def\bz{\mathbf{z}}%
\global\long\def\bad{\mathbf{Bad}}%
\global\long\def\badw{\mathbf{Bad}\left(\bw\right)}%
\global\long\def\cala{\mathcal{A}}%
\global\long\def\calu{\mathcal{U}}%
\global\long\def\calh{\mathcal{H}}%
\global\long\def\calb{\mathcal{B}}%
\global\long\def\calq{\mathcal{Q}}%
\global\long\def\calc{\mathcal{C}}%
\global\long\def\cale{\mathcal{E}}%
\global\long\def\calf{\mathcal{F}}%
\global\long\def\calw{\mathcal{W}}%
\global\long\def\bbr{\mathbb{R}}%
\global\long\def\bbz{\mathbb{Z}}%
\global\long\def\bbq{\mathbb{Q}}%
\global\long\def\bbn{\mathbb{N}}%
\global\long\def\sl{\text{SL}}%
\global\long\def\sltr{\text{SL}_{3}\left(\bbr\right)}%
\global\long\def\sltz{\text{SL}_{3}\left(\bbz\right)}%
\global\long\def\sldr{\text{SL}_{d+1}\left(\bbr\right)}%
\global\long\def\sldz{\text{SL}_{d+1}\left(\bbz\right)}%
\global\long\def\sep{\,:\,}%
\global\long\def\subp{\supseteq}%
\global\long\def\subq{\subseteq}%
\global\long\def\eps{\varepsilon}%
\global\long\def\diam{\operatorname{diam}}%
\global\long\def\diag{\operatorname{diag}}%
\global\long\def\dist{\operatorname{d}}%
\global\long\def\linearspan{\operatorname{span}}%
\global\long\def\support{\operatorname{supp}}%
\global\long\def\supp{\operatorname{supp}}%
\global\long\def\covolume{\operatorname{covolume}}%

\global\long\def\limx#1#2{{\displaystyle \lim_{#1\to\infty}}#2}%

\global\long\def\xto#1#2{{\displaystyle \xrightarrow[#1\to#2]{}}}%
\newcommand{\R}{\mathbb{R}}
\newcommand{\N}{\mathbb{N}}
\newcommand{\cC}{\mathcal{C}}
\newcommand{\Z}{\mathbb{Z}}
\newcommand{\ve}{\varepsilon}
\newcommand{\vv}[1]{\mathbf{#1}}

\newcommand{\three}{1}
\newcommand{\threeo}{}
\newcommand{\threePlusOne}{2}

\global\long\def\modified{\text{restricted}}%

\title{$\badw$ is hyperplane absolute winning}

\author[Victor Beresnevich]{Victor Beresnevich}
\address{Victor Beresnevich, Department of Mathematics, University of York, Heslington, York, YO10 5DD, United Kingdom}
\email{victor.beresnevich@york.ac.uk}

\author[Erez Nesharim]{Erez Nesharim}
\address{Erez Nesharim, Einstein Institute of Mathematics, The Hebrew University of Jerusalem, Jerusalem, 9190401, Israel}
\email{ereznesh@gmail.com}

\author[Lei Yang]{Lei Yang\vspace{-2.85em}}
\address{Lei Yang, College of Mathematics, Sichuan University, Chengdu, Sichuan, 610000, China}
\email{lyang861028@gmail.com}

\date{}

\maketitle

\begin{abstract}
In 1998 Kleinbock conjectured that any set of weighted badly approximable $d\times n$ real matrices is a winning subset in the sense of Schmidt's game. In this paper we prove this conjecture in full for vectors in $\R^d$ in arbitrary dimensions by showing that the corresponding set of weighted badly approximable vectors is hyperplane absolute winning. The proof uses the Cantor potential game played on the support of Ahlfors regular absolutely decaying measures and 
the quantitative nondivergence estimate for a class of fractal measures due to Kleinbock, Lindenstrauss and Weiss. To establish the existence of a relevant winning strategy in the Cantor potential game we introduce a new approach using two independent diagonal actions on the space of lattices.
%One of the actions arises from the Dani correspondence and is used to detect badly approximable vectors. The second action extends the orbits into expanding compact sets, ultimately enabling the use of the quantitative non-divergence estimate.
\end{abstract}

\paragraph{}
\begin{center}
\emph{Dedicated to Anna Nesharim\vspace{0.85em}}
\end{center}

\section{Introduction}\label{sec:introduction}

As is well known, the rational points are dense in the real space $\R^d$, meaning that $\R^d$ can be covered by cubes in $\R^d$ of an arbitrarily small fixed sidelength $\ve>0$ centred at rational points. Various quantitative aspects of this basic property are studied within the theory of Diophantine approximation. For instance, by Dirichlet's theorem, $\R^d$ can be covered by cubes in $\R^d$ of sidelength $2q^{-(d+1)/d}$ centred at rational points {(not necessarily written in the lowest terms)} with arbitrarily large denominators $q\in\bbn$. One of the fundamental concepts studied in Diophantine approximation is that of badly approximable points. These are precisely the points in $\R^d$ that {cannot be} covered by the cubes arising from Dirichlet's theorem when $2$ is replaced by any positive constant. In the more general case one considers coverings by parallelepipeds with different sidelengths controlled by $d$ real parameters referred to as weights. This more general setup gives rise to the notion of weighted badly approximable points that will be the main object of study in this paper.

In what follows $d\in\bbn$ and $\calw_d$ denotes the collection of all \emph{$d$-dimensional weights}:
\[
\calw_d=\left\{\bw=(w_1,\dots,w_d)\in\bbr^d\sep w_1,\ldots,w_d\geq0,\;w_1+\ldots+w_d=1\right\}.
\]
For $\bw\in\calw_d$, a vector $\bx{=(x_1,\dots,x_d)}\in\bbr^d$ is called \emph{badly approximable with respect to $\bw$} if there exists $c>0$ such that for every $q\in\bbn$ and $\bp=(p_1,\dots,p_d)\in\bbz^d$ there exists $1\leq i\leq d$ satisfying
\[
\left|x_i-\frac{p_i}{q}\right|\geq\frac{c}{q^{1+w_i}}\,.
\]
Let $\badw$ be the \emph{set of badly approximable vectors in $\R^d$ with respect to $\bw$}.
%Thus,
%\[
%\badw=\left\{  \bx\in\bbr^{d}\sep\inf_{q\in\bbn,\;\bp\in\bbz^d}\max_{1\leq i\leq d}q^{w_{i}}\left| qx_i-p_i \right|>0\right\}.
%\]

One of the motivations for studying the set of weighted badly approximable vectors comes from its
connection to a conjecture of Littlewood -- a famous open problem from the 1930s. Let us briefly recall this connection.

\begin{conj}[Littlewood's conjecture, 1930s]
\label{conj:lit}
Every $\bx{=(x_1,x_2)}\in\bbr^2$ satisfies
\begin{equation}\label{eq:lit}
\inf_{q\in\bbn,\;\bp\in\bbz^2}q\,|qx_1-p_1|\,|qx_2-p_2|=0\,.
\end{equation}
\end{conj}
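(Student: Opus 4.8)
Since \eqref{eq:lit} is in fact \emph{Littlewood's conjecture}, open since the 1930s and widely considered out of current reach, what follows is not a proof but the standard line of attack together with the point at which it stalls.

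The plan is first to recast \eqref{eq:lit} dynamically. Write $\|\cdot\|$ for the distance to the nearest integer; then $\bx=(x_1,x_2)$ \emph{fails} \eqref{eq:lit} exactly when $\liminf_{q\to\infty}q\,\|qx_1\|\,\|qx_2\|>0$. Put $u_\bx=\left(\begin{smallmatrix}1&0&0\\x_1&1&0\\x_2&0&1\end{smallmatrix}\right)$ and, for $\mathbf{t}=(t_1,t_2)$ with $t_1,t_2\geq0$, put $a_{\mathbf{t}}=\diag\!\bigl(e^{-t_1-t_2},e^{t_1},e^{t_2}\bigr)\in\sltr$. Since $a_{\mathbf{t}}$ is diagonal of determinant $1$, the product of the absolute values of the three coordinates of $a_{\mathbf{t}}u_\bx(q,-p_1,-p_2)$ equals $q\,|qx_1-p_1|\,|qx_2-p_2|$ for every $\mathbf{t}$, and so by Mahler's compactness criterion (Dani's correspondence) one has: $\bx$ fails \eqref{eq:lit} if and only if the multiparameter orbit $\{a_{\mathbf{t}}u_\bx\sltz\sep t_1,t_2\geq0\}$ is bounded in $\sltr/\sltz$. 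Thus Littlewood's conjecture is equivalent to the assertion that this positive diagonal orbit is unbounded for every $\bx\in\bbr^2$.

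The next step would be to bring in rigidity of the full diagonal subgroup $A=\{\diag(e^{s_1},e^{s_2},e^{s_3})\sep s_1+s_2+s_3=0\}$. By the measure classification theorem of Einsiedler, Katok and Lindenstrauss, every $A$-invariant ergodic probability measure on $\sltr/\sltz$ with positive entropy under some one-parameter subgroup is homogeneous; combined with an entropy/variational argument this already yields that the set of $\bx\in\bbr^2$ failing \eqref{eq:lit} has Hausdorff dimension $0$, which is the strongest result currently known towards the conjecture. To conclude one would need, in outline, the corresponding \emph{topological} rigidity --- that every bounded $A$-orbit on $\sltr/\sltz$ is closed of finite volume, hence algebraic --- together with the classical observation going back to Cassels and Swinnerton-Dyer that the algebraic bounded orbits, which come from totally real cubic fields, are never of the unipotent shape $u_\bx\sltz$.

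The hard part --- in fact the reason the conjecture is still open --- is precisely this passage from measure rigidity to topological rigidity: bounded $A$-orbits genuinely exist, so they cannot be excluded outright, and there is at present no mechanism to rule out hypothetical non-algebraic bounded orbits, nor to show that no bounded orbit of any description is of the form $u_\bx\sltz$ (such a mechanism would in particular settle the Cassels--Swinnerton-Dyer/Margulis boundedness conjecture in rank two; compare the partial progress on the mixed and $p$-adic variants of de Mathan--Teuli\'e). One should add, since it is a frequent source of confusion, that the main theorem of the present paper does \emph{not} bear on \eqref{eq:lit}: it makes each $\badw$ hyperplane absolute winning, so $\bigcap_{\bw\in\calw_2\cap\bbq^2}\badw$ is again winning and in particular non-empty and of full Hausdorff dimension, but membership in $\badw$ only bounds $\max_i q^{w_i}\|qx_i\|$ from below, whereas \eqref{eq:lit} constrains the product $\bigl(q^{w_1}\|qx_1\|\bigr)\bigl(q^{w_2}\|qx_2\|\bigr)=q\,\|qx_1\|\,\|qx_2\|$ --- a different quantity altogether.
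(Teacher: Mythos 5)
The statement you were asked about is Conjecture~\ref{conj:lit} itself: the paper records Littlewood's conjecture purely as motivation and offers no proof, so there is nothing in the paper to compare your argument against, and your refusal to manufacture a ``proof'' is exactly the right call. Your sketch of the standard attack is accurate --- the reformulation of the failure of \eqref{eq:lit} as boundedness of the positive-diagonal orbit of $u_{\bx}\sltz$ in $\sltr/\sltz$, the Einsiedler--Katok--Lindenstrauss dimension-zero theorem as the state of the art, and the missing passage from measure rigidity to topological rigidity of bounded $A$-orbits. One small refinement to your closing remark: the paper's stated link between $\badw$ and \eqref{eq:lit} is Schmidt's observation in the contrapositive direction, namely that $\bx\notin\badw$ for a single weight $\bw\in\calw_2$ already forces \eqref{eq:lit}; the main theorem of the paper (each $\badw$ is HAW, hence countable intersections are nonempty) therefore bears on Littlewood only negatively, by confirming that one cannot hope to prove it by showing intersections of the sets $\badw$ are empty --- which is consistent with, and sharpens, the point you make.
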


It was noted by Schmidt \cite{Schmidt7} that if $\bx\notin \badw$ for some $\bw\in\calw_2$
then $\bx$ satisfies \eqref{eq:lit}. In particular, if the intersection of the sets $\badw$ over all $\vv w\in\calw_2$ was the empty set, then Littlewood's conjecture would follow. However, Schmidt doubted that using only two weights would be sufficient, if his observation can be used to verify \eqref{conj:lit} at all. Specifically, Schmidt formulated the following problem that has {inspired} many researchers in Diophantine approximation and homogeneous dynamics.% for at least three decades.

\begin{conj}[Schmidt's conjecture, 1982]
\label{conj:schmidt}
{For every} $\bw_1,\bw_2\in\calw_2$ {we have that}
\begin{equation}\label{eq:schmidt}
\bad(\bw_1)\cap\bad(\bw_2)\neq\varnothing\,.
\end{equation}
\end{conj}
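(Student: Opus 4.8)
The plan is to obtain Schmidt's conjecture as an immediate corollary of the main result of this paper -- that $\badw$ is hyperplane absolute winning (HAW) for every $\bw\in\calw_d$ and every $d\in\bbn$ -- together with three standard structural properties of the HAW class (going back to McMullen and to Broderick--Fishman--Kleinbock--Reich--Weiss): (i) a HAW subset of $\R^d$ is $\alpha$-winning in the sense of Schmidt's $(\alpha,\beta)$-game for every $\alpha\in(0,1/2]$; (ii) the intersection of countably many HAW subsets of $\R^d$ is again HAW; and (iii) an $\alpha$-winning subset of $\R^d$ has Hausdorff dimension $d$, and in particular is nonempty. In fact this route yields far more than \eqref{eq:schmidt}: it shows that $\bad(\bw_1)\cap\bad(\bw_2)$ has full Hausdorff dimension $2$.

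The deduction itself is short. Fix $\bw_1,\bw_2\in\calw_2$. Applying the main theorem with $d=2$, both $\bad(\bw_1)$ and $\bad(\bw_2)$ are HAW subsets of $\R^2$. By property (ii) their intersection $\bad(\bw_1)\cap\bad(\bw_2)$ is HAW; by (i) it is $\tfrac12$-winning; and by (iii) it has Hausdorff dimension $2$, hence is nonempty, which is exactly \eqref{eq:schmidt}. The same argument, using the full strength of (ii), shows that $\bigcap_{k\in\bbn}\bad(\bw_k)$ is nonempty -- indeed of full dimension -- for an arbitrary sequence $\bw_k\in\calw_2$, and likewise in every dimension $d$.

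Thus essentially all of the difficulty is pushed into the main theorem, and that is where the real obstacle lies: one must produce a winning strategy for the remover in the hyperplane absolute game on $\badw$. Following the scheme announced in the abstract, the strategy is built inside the \emph{Cantor potential game} played on the support of an Ahlfors regular, absolutely decaying measure, the key external input being the quantitative nondivergence estimate of Kleinbock, Lindenstrauss and Weiss for fractal measures, which controls -- at each level of the Cantor construction -- the measure of the ``dangerous'' neighbourhoods of rational hyperplanes that must be deleted. The step I expect to be the most delicate, and the genuinely new ingredient here, is the simultaneous use of \emph{two independent diagonal one-parameter flows} on the space of unimodular lattices in $\bbr^{d+1}$: one has to couple the two actions so that the families of sets removed at successive levels are transverse to the prescribed hyperplane directions and so that the potential bookkeeping of the Cantor game closes up with a definite surplus. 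Once this geometric-dynamical input is in place, checking that the resulting strategy is winning -- and hence that $\badw$ is HAW -- is a matter of combining the nondivergence bound with the routine covering and counting estimates for the Cantor potential game.
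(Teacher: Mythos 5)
Your deduction is logically sound and non-circular, but it is worth noting that the paper itself does not prove this statement: Conjecture~\ref{conj:schmidt} is presented as historical context, with its resolution attributed to Badziahin, Pollington and Velani \cite{BPV}, whose argument is a direct construction (later subsumed by \cite{An2}, \cite{N13} and the higher-dimensional works) rather than a consequence of winning properties. Your route instead derives \eqref{eq:schmidt} from Theorem~\ref{thm:HAW} with $d=2$ together with the standard facts that HAW sets are stable under countable intersection and that winning sets have full Hausdorff dimension; since the proof of Theorem~\ref{thm:HAW} in Section~\ref{sec:proof} nowhere uses Schmidt's conjecture, this is a legitimate (if anachronistic) derivation, and it indeed gives the stronger conclusions you state, essentially Corollary~\ref{cor:HAW}. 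One small correction: as quoted in Section~\ref{sec:schmidt}, \cite{BFKRW} shows HAW sets are $\alpha$-winning for every $0<\alpha<\tfrac12$, not for $\alpha=\tfrac12$; so you should fix some $\alpha\in\left(0,\tfrac12\right)$ rather than claim $\tfrac12$-winning, which does not affect the full-dimension and nonemptiness conclusions. Your closing sketch of how Theorem~\ref{thm:HAW} itself is proved is only a paraphrase of the abstract (Cantor potential game on supports of Ahlfors regular absolutely decaying measures, quantitative nondivergence of \cite{KLW}, and the two diagonal actions $a_k$ and $d_\ell$); that is where all the substance lies, and it is carried out in Sections~\ref{sec:schmidt}--\ref{sec:proof} of the paper, so your proposal should be read as a reduction of the conjecture to Theorem~\ref{thm:HAW} rather than as an independent proof.
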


%\noindent
Almost three decades later Schmidt's conjecture was verified by Badziahin, Pollington and Velani in the tour de force \cite{BPV}, which opened the way to many exciting new developments.

The more general version of Schmidt's conjecture deals with arbitrary finite and, furthermore, countable intersections of $\badw$. Already in \cite{BPV} arbitrary finite intersections were considered. In fact, {the main result of \cite{BPV} implies} that
\begin{equation}\label{eq:N13}
\bigcap_{n=1}^\infty\bad(\bw_n)\neq\varnothing
\end{equation}
if the countably many weights $\bw_1,\bw_2,\ldots\in\calw_2$ satisfy the condition {that}
\begin{equation}\label{eq:BPVcondition}
\liminf_{n\to\infty}\min\bw_n > 0\,.
\end{equation}
Using different techniques condition \eqref{eq:BPVcondition} was independently removed by An \cite{An2} and the second named author \cite{N13}, who both established \eqref{eq:N13} for arbitrary countable intersections. Indeed, An \cite{An2} showed a stronger dimension statement.

Schmidt's conjecture can {also} be considered in higher dimensions. In this generality it was verified by the first named author \cite{Beresnevich_BA}. Similarly to the two dimensional result of \cite{BPV}, {\eqref{eq:N13} was established in \cite{Beresnevich_BA}} for any sequence of weights {$\bw_1,\bw_2,\ldots\in\calw_d$} satisfying \eqref{eq:BPVcondition}. Condition \eqref{eq:BPVcondition} was finally removed by the third named author {in} \cite{Yang}. {In should be noted that these papers go the extra mile to give a full dimension statement for the intersection appearing in \eqref{eq:N13} and enable to restrict the left hand side of \eqref{eq:N13} to nondegenerate curves and manifolds.}

Two natural {frameworks} for proving the countable intersection property {of the sets $\badw$} are {offered by} topology and measure theory.
Indeed, if $X$ is a complete metric space or a measure space and $S_1,S_2,\ldots\subq X$ are $G_\delta$
dense, or, respectively, full measure sets, then $\bigcap_{n=1}^\infty S_n$ is $G_\delta$ dense,
respectively, a set of full measure, and in particular, nonempty. However, the set $\badw$ is neither comeagre nor conull. In fact, $\badw$ is a countable union of closed sets whose Lebesgue measure is zero, hence it is both meagre and null.

An alternative framework to establish the countable intersection property is {offered by} game theory. This was first
articulated by Schmidt \cite{schmidt1} who introduced a variant of the Banach-Mazur game, now called
Schmidt's game, and its corresponding winning sets. Ever since other variants of Schmidt's game have been {proposed} by many authors for {various} purposes. We refer the reader to Section~\ref{sec:schmidt} for the definitions of \emph{hyperplane absolute winning sets \emph{(abbr. \emph{HAW})}} and \emph{Cantor winning sets}, which will be used in this paper. 
We also refer the reader to \cite[\S2]{BHNS} or \cite[\S2]{BFKRW} for the definitions of \emph{winning sets}, \emph{$\alpha$-winning sets} and \emph{absolute winning sets} which will be mentioned in this paper.

The study of winning properties of $\badw$ has a long history.
Schmidt proved in \cite{schmidt1} that $\bad(1)$ is winning, where it was also
mentioned that the analogous theorem holds for
$$\bw = \bw_d:=\left(\tfrac{1}{d},\ldots,\tfrac{1}{d}\right)$$
for every $d$. Indeed, the proof of this can be found in Schmidt's monograph \cite{Schmidt3}. McMullen \cite{McMullen_absolute_winning} proved that $\bad(1)$ is absolute winning.
Later Broderick, Fishman, Kleinbock, Reich and Weiss \cite{BFKRW} proved that $\bad(\bw_d)$ is
HAW for any $d\geq1$.

{However, the study of weighted badly approximable points turned out to be much harder. Indeed, the following natural problem that was raised by Kleinbock \cite[Section 8]{Kleinbock_matrices} over two decades ago remains open with the exception of one special case that will shortly be mentioned.}

\begin{problem*}[Kleinbock, 1998]
    Is it true that $\badw$ is winning for every weight $\bw$?
\end{problem*}

The {first breakthrough came about with the paper of} An \cite{An1} who {settled it for $d=2$.} Based on \cite{An1}, Simmons and the second named author \cite{NesharimSimmons} proved that $\badw$ is HAW for any $\bw \in \calw_2$. In higher dimensions, the only known result towards Kleinbock's problem is due to Guan and Yu \cite{guanyu} who proved that for weights $\bw \in \calw_d$ satisfying {the condition}
$$
w_1=\dots=w_{d-1}\ge w_d\,,
$$
$\badw$ is HAW.
% In fact,
% in its equivalent formulation Theorem~\ref{thm:intersection} was already proved independently by Kleinbock and Weiss \cite{KleinbockWeiss1} and by Kristensen, Thorn and Velani \cite{KTV}.
The goal of this paper is to resolve Kleinbock's problem in full. Our main result reads as follows.

\begin{thm}\label{thm:HAW}
For any $\bw\in\calw_d$ the set $\badw$ is HAW. In particular, it is winning.
\end{thm}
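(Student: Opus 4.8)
The plan is to deduce Theorem~\ref{thm:HAW} from the stronger-looking statement that $\badw$ has nonempty (in fact full-dimensional) intersection with the support of any \emph{Ahlfors regular absolutely decaying} measure, made effective through the Cantor potential game. Concretely, I would fix a hyperplane absolute game and, on a ball where a round is played, pass to an auxiliary Ahlfors regular absolutely friendly measure $\mu$ supported on a Cantor-type subset of that ball; Bob's hyperplane neighbourhoods of thickness $\rho$ can only damage $\mu$-mass in a controlled way (by the absolute decay property), so it suffices to show that the player trying to land in $\badw$ has a winning strategy in the Cantor potential game played against $\mu$. This reduction is the standard translation between HAW and the Cantor potential game on Ahlfors regular absolutely decaying measures, and the weights only enter through the metric geometry of the parallelepipeds in Dirichlet's theorem; the real content is the existence of the winning strategy.

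The heart of the matter is then to construct that strategy. I would recast membership in $\badw$ dynamically via the Dani correspondence: $\bx\in\badw$ iff the forward orbit $\{g_t^{\bw} u(\bx)\Z^{d+1}: t\ge 0\}$ stays in a fixed compact subset of the space of unimodular lattices $\mathrm{SL}_{d+1}(\R)/\mathrm{SL}_{d+1}(\Z)$, where $g_t^{\bw}=\diag(e^{w_1 t},\dots,e^{w_d t},e^{-t})$ and $u(\bx)$ is the corresponding unipotent. At each stage of the Cantor game I must delete, from the current family of balls, those whose centres are about to make the orbit excurse deep into the cusp; by the quantitative nondivergence estimate of Kleinbock--Lindenstrauss--Weiss for the fractal measure $\mu$, such ``bad'' sub-balls occupy only a small $\mu$-proportion, \emph{provided} I can show that the relevant functions (the successive minima along the orbit) behave like $(C,\alpha)$-good functions on $\supp\mu$ and do not degenerate. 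The obstruction to this is precisely the degenerate directions in which some $w_i$ is very small relative to the others: along those directions a single diagonal action does not contract fast enough to guarantee escape from near-rational approximants of the ``wrong'' shape.

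The new ingredient — and the step I expect to be the main obstacle — is handling this degeneracy by running \emph{two independent diagonal flows} rather than one. Writing the weight $\bw$ as a positive combination of (or comparing it against) two auxiliary weights $\bw', \bw''$ chosen so that each has all coordinates bounded below, one tracks the pair of orbits $g_t^{\bw'}u(\bx)\Z^{d+1}$ and $g_s^{\bw''}u(\bx)\Z^{d+1}$; a lattice point that is short for $g_t^{\bw}u(\bx)$ must be short for at least one of the two auxiliary flows at a comparable time, so the union bound over the two nondivergence estimates still gives a small bad set. Making this precise requires: (i) choosing $\bw',\bw''$ uniformly over $\calw_d$ with quantitative control (this is a compactness/convexity argument on the simplex $\calw_d$, with care near its boundary faces where some $w_i=0$); (ii) verifying that the two-parameter family of functions entering the KLW estimate is uniformly $(C,\alpha)$-good on $\supp\mu$ with constants independent of the round; and (iii) bookkeeping the time scales so that the sequence of radii in the Cantor game shrinks geometrically while the deleted $\mu$-measure at each scale sums to less than the total mass. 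Once the winning strategy in the Cantor potential game is in hand, the general machinery cited in the excerpt upgrades it to HAW, and HAW implies winning, completing the proof of Theorem~\ref{thm:HAW}.
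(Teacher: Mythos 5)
Your high-level architecture is the same as the paper's: reduce the HAW statement to the assertion that $\badw$ meets the support of every compactly supported Ahlfors regular absolutely decaying measure (this is Theorem~\ref{thm:intersection}, and the reduction is exactly the content of Propositions~\ref{prop:known}, \ref{prop:equiv} and \ref{measure}, which needs Borel determinacy plus the construction of such a measure inside a hyperplane diffuse set, not just absolute decay -- also note that in the HAW game it is Alice, not Bob, who removes hyperplane neighbourhoods), and then prove the intersection statement by a winning strategy in the Cantor potential game on $\supp\mu$ via the Dani correspondence and Kleinbock--Lindenstrauss--Weiss nondivergence. The genuine gap is in the step you yourself identify as the heart. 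First, a weight $\bw\in\calw_d$ with a very small (or, after reordering, smallest) coordinate cannot be written as, or usefully compared with, a combination of two weights $\bw',\bw''$ all of whose coordinates are bounded below: for any convex combination one has $\min\{w_i',w_i''\}\le w_i$ in each coordinate, so no uniform lower bound is available precisely in the regime that was open. Second, the union-bound mechanism fails: a nonzero lattice vector that is $\eps$-short for the $\bw$-flow at time $t$ satisfies $|v_i|\le\eps b^{w_it}$ in the contracted directions and $\le\eps b^{-t}$ in the expanded one; if some $|v_i|$ is of size comparable to $\eps b^{w_it}$ and the auxiliary weight has $w_i'<w_i$, then shortness under the auxiliary flow forces a time $s\ge (w_i/w_i')t>t$, at which the constraint in the expanded direction is strictly stronger than the information available at time $t$. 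Hence the bad set for the $\bw$-flow is not contained in the union of the bad sets of the two auxiliary flows, and two independent KLW estimates plus a union bound do not control it.

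What the paper actually does with "two diagonal actions" is different in kind: it keeps the weighted flow $a_k$ from the Dani correspondence and introduces a second diagonal flow $d_\ell$ (see \eqref{eq:dl}) which is not of weight form -- it expands exactly the coordinates carrying the maximal weight and contracts all the others, including the direction expanded by $a_k$. Alice deletes the sets where $d_\ell a_{n+1+s\ell}u_{\bx}\Z^{d+1}$ leaves a prescribed compact set, and the key measure estimate (Lemma~\ref{lem:keyLemma}) is obtained not by a union bound but by bootstrapping across the stages of the game: its hypotheses \eqref{eq:firstAssumption} and \eqref{eq:secondAssumption} demand that some point of the current ball has already been kept, by Alice's earlier moves, inside compact sets for two other parameter pairs of the combined flow, and this is what excludes alternative (ii) of Theorem~\ref{thm:KLW} and makes the nondivergence bound applicable at the current scale; in the end only the $\ell=1$ family is used to conclude that the outcome lies in $\badw$. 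Without an inductive scheme of this type (or some genuinely different replacement for your union bound), your proposal does not close; the remaining ingredients you list (uniform choice of constants, $(C,\alpha)$-goodness on $\supp\mu$, and the bookkeeping of scales against the $\beta^{-\alpha'(i+1)}$ budget of the Cantor potential game) are handled in the paper essentially as you describe.
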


The HAW property implies more than just the countable intersection property. For example, we have the following corollary, which follows from Theorem~\ref{thm:HAW} on applying properties of HAW sets established in \cite{BFKRW} (see Section~\ref{sec:schmidt} for the definition of \emph{Ahlfors regular} and \emph{absolutely decaying} measures).

\begin{cor}\label{cor:HAW}
For any sequence of weights $\bw_1,\bw_2\ldots\in\calw_d$ and any sequence $f_1,f_2,\ldots$ of $C^1$ diffeomorphisms of $\bbr^d$, the set
\[
\bigcap_{n=1}^\infty f_n\left(\bad\left(\bw_n\right)\right)
\]
is HAW. In particular, for every Ahlfors regular absolutely decaying measure $\mu$ on $\bbr^d$ we have that
\[
\dim\left(\bigcap_{n=1}^\infty f_n\left(\bad\left(\bw_n\right)\right)\cap \support\mu\right)=\dim{\left(\supp\mu\right)},
\]
where $\dim$ stands for Hausdorff dimension.
\end{cor}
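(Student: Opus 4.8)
The plan is to derive Corollary~\ref{cor:HAW} from Theorem~\ref{thm:HAW} as a purely formal consequence, using three structural properties of the class of HAW subsets of $\bbr^d$ established by Broderick, Fishman, Kleinbock, Reich and Weiss in \cite{BFKRW}: (i) the HAW property is preserved by $C^1$ diffeomorphisms of $\bbr^d$; (ii) a countable intersection of HAW sets is again HAW; and (iii) if $\mu$ is an Ahlfors regular absolutely decaying measure on $\bbr^d$ and $S\subq\bbr^d$ is HAW, then $\dim\left(S\cap\support\mu\right)=\dim\left(\support\mu\right)$. All three are recorded (or immediately extracted) from \cite{BFKRW}, and the definitions involved are the ones recalled in Section~\ref{sec:schmidt}.

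First I would record that, by Theorem~\ref{thm:HAW}, each set $\bad(\bw_n)$ is HAW. Applying property (i) to the $C^1$ diffeomorphism $f_n$ of $\bbr^d$, each image $f_n\left(\bad\left(\bw_n\right)\right)$ is HAW as well. By property (ii), the countable intersection $\bigcap_{n=1}^\infty f_n\left(\bad\left(\bw_n\right)\right)$ is HAW, which is precisely the first assertion of the corollary.

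For the second assertion, I would apply property (iii) with $S=\bigcap_{n=1}^\infty f_n\left(\bad\left(\bw_n\right)\right)$, which has just been shown to be HAW, to conclude that
\[
\dim\left(\bigcap_{n=1}^\infty f_n\left(\bad\left(\bw_n\right)\right)\cap \support\mu\right)=\dim\left(\support\mu\right).
\]
The reverse inequality $\dim\left(\,\cdot\,\right)\le\dim\left(\support\mu\right)$ is automatic, since the set on the left is contained in $\support\mu$, so the only content is the lower bound supplied by (iii).

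There is no genuine obstacle at this stage: the substantive work is entirely contained in Theorem~\ref{thm:HAW}, and the corollary is obtained by invoking properties of HAW sets that are already in the literature. The only points requiring care are bookkeeping ones, namely checking that the diffeomorphism-invariance result of \cite{BFKRW} applies to global $C^1$ diffeomorphisms of $\bbr^d$ (it does, being a local statement), and that the class of Ahlfors regular absolutely decaying measures for which \cite{BFKRW} proves the full-dimension property coincides with the one defined in Section~\ref{sec:schmidt}; both checks are routine against the definitions.
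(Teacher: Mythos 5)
Your proposal is correct and is essentially the paper's own argument: the corollary is deduced from Theorem~\ref{thm:HAW} by invoking exactly the properties of HAW sets established in \cite{BFKRW} (invariance under $C^1$ diffeomorphisms, stability under countable intersections, and full Hausdorff dimension of the intersection with supports of Ahlfors regular absolutely decaying measures). Nothing further is needed.
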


Theorem~\ref{thm:HAW} is proved by passing to the following equivalent formulation.

\begin{thm}\label{thm:intersection}
For any $\bw\in\calw_d$ and any compactly supported Ahlfors regular absolutely decaying measure $\mu$ on $\bbr^d$ we have that
\begin{equation}\label{eq:intersection}
\badw\cap \support \mu\neq \varnothing\,.
\end{equation}
\end{thm}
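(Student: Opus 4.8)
The plan is to recast \eqref{eq:intersection} as a statement about bounded orbits in the space of unimodular lattices $X_{d+1}:=\sldr/\sldz$ and then to build the required point as the outcome of a Cantor potential game on $\support\mu$. For $t\ge 0$ set $g_t^{\bw}=\diag(e^{-t},e^{w_1 t},\dots,e^{w_d t})$ and let $u_{\bx}\in\sldr$ be the unipotent matrix mapping $(q,-\bp)$ to $(q,\,q\bx-\bp)$. By Mahler's compactness criterion and the Dani correspondence, $\bx\in\badw$ if and only if the forward orbit $\{g_t^{\bw}u_{\bx}\bbz^{d+1}:t\ge 0\}$ stays in a fixed compact subset of $X_{d+1}$, equivalently $\delta(g_t^{\bw}u_{\bx}\bbz^{d+1})\ge c$ for all $t\ge 0$ and some $c>0$, where $\delta(\Lambda)$ denotes the length of a shortest nonzero vector of $\Lambda$. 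Since $g_t^{\bw}$ distorts $\delta$ by at most a bounded factor over any interval of bounded length, it is enough to control $\delta$ along the progression $t_k=k\Delta$, $k\in\bbn$, for a fixed $\Delta=\log R$. Accordingly one plays the Cantor potential game on $\support\mu$ with contraction ratio $R^{-1}$, so that a ball of the game at level $k$ has radius comparable to the dynamical scale $e^{-t_k}$; using the properties of this game recalled in Section~\ref{sec:schmidt}, \eqref{eq:intersection} follows once one exhibits a winning strategy forcing the surviving point $\bx$ to satisfy $\delta(g_{t_k}^{\bw}u_{\bx}\bbz^{d+1})\ge c$ for every $k$. This reduces to a covering estimate: for a suitable small $c>0$ and each level $k$, the dangerous set
\[
\mathcal{D}_{k+1}=\bigl\{\bx\in B_k:\ \delta(g_t^{\bw}u_{\bx}\bbz^{d+1})<c\ \text{ for some }t\in[t_k,t_{k+1}]\bigr\}
\]
must admit a cover by balls whose total potential stays within the budget allotted at level $k+1$, uniformly in $k$ and in the position of the current ball $B_k$ (a ball of radius $\asymp e^{-t_k}$ centred in $\support\mu$).

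The first ingredient is the quantitative nondivergence estimate of Kleinbock, Lindenstrauss and Weiss for Ahlfors regular absolutely decaying measures, applied to the orbits $t\mapsto g_t^{\bw}u_{\bx}\bbz^{d+1}$: on a ball at the natural dynamical scale it gives $\mu(\mathcal{D}_{k+1})\le Cc^{\alpha}\mu(B_k)$ with $C,\alpha>0$ independent of $k$, so taking $c$ small makes $\mathcal{D}_{k+1}$ an arbitrarily small $\mu$-fraction of $B_k$. The difficulty is that the game needs a bound on the potential of a \emph{covering}, not merely on the measure of $\mathcal{D}_{k+1}$. The set $\mathcal{D}_{k+1}$ is a union of parallelepipeds attached to the integer vectors $(q,\bp)$ with $e^{t_k}\lesssim q\lesssim e^{t_{k+1}}$, and when the weights are unequal these parallelepipeds are strongly anisotropic, being relatively wide in the coordinate directions $i$ with $w_i$ small. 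Even when each parallelepiped is far smaller than a level-$(k+1)$ ball in every direction, the width in the small-$w_i$ directions forces the covering to use a very large number of balls, or balls of comparatively large radius, so that the accumulated potential over all $q$ and all $\bp$ cannot be kept in budget; the situation is worst near the boundary of $\calw_d$, where some $w_i$ tends to $0$. This is the obstruction that left Kleinbock's problem open and that confined \cite{guanyu} to weights with $w_1=\dots=w_{d-1}\ge w_d$, and controlling the \emph{shape} rather than merely the measure of $\mathcal{D}_{k+1}$ is the heart of the matter.

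To overcome it the argument uses two independent diagonal flows in tandem, in place of the single flow $g^{\bw}$. The point is that a second, suitably chosen diagonal action on $X_{d+1}$ can be made to detect the same rational approximations $\bp/q$ as $g^{\bw}$ while weighing the coordinate directions differently — for instance, writing $\bw=\lambda\bw'+(1-\lambda)\bw''$ with $\bw',\bw''\in\calw_d$ and $\lambda\in(0,1)$ one has $g_t^{\bw}=g_{\lambda t}^{\bw'}\,g_{(1-\lambda)t}^{\bw''}$, so the flows $g^{\bw'}$ and $g^{\bw''}$ carry the same arithmetic but different geometry. Applying the Kleinbock--Lindenstrauss--Weiss estimate to the orbits of both flows and intersecting the two pieces of information inside $B_k$ yields complementary control: the coordinate directions in which the dangerous parallelepipeds for one flow are too wide can be arranged to be precisely those in which they are thin for the other, so the anisotropic parallelepipeds are replaced by a family of regions of controlled diameter in \emph{all} directions, which can be covered by a bounded number of balls of the next level; this keeps the accumulated potential in budget however unbalanced $\bw$ is. Taking the two flows, $R$ and $c$ appropriately then yields the covering estimate, hence the winning strategy, hence \eqref{eq:intersection}. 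The main obstacle, where essentially all the work lies, is to set up the two diagonal actions so that their combined nondivergence genuinely controls the shape of $\mathcal{D}_{k+1}$ — following the interaction of the two orbits at all the relevant pairs of times and checking that the geometric control survives the passage from Lebesgue measure to the fractal measure $\mu$, with all constants uniform in $k$ — and to dispatch separately the weights on the boundary of $\calw_d$ where some component vanishes, the case $d=1$ being classical.
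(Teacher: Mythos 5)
Your skeleton coincides with the paper's: the Dani correspondence, the Cantor potential game played on $\supp\mu$, the Kleinbock--Lindenstrauss--Weiss quantitative nondivergence, and the slogan of two diagonal actions. But there is a genuine gap exactly where the proof lives. The measure bound you treat as automatic --- ``on a ball at the natural dynamical scale KLW gives $\mu(\mathcal{D}_{k+1})\le Cc^{\gamma}\mu(B_k)$ uniformly in $k$'' --- is not automatic: Theorem~\ref{thm:KLW} is a dichotomy, and to extract the measure estimate one must rule out its alternative (ii), namely the existence of an integral multivector $\vv v$ with $\sup_{\bx\in B}\|gu_{\bx}\vv v\|$ small on the \emph{whole} current ball. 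Verifying this for balls of radius comparable to the dynamical scale centred at arbitrary points of $\supp\mu$, uniformly as $k\to\infty$, is the true obstruction (not only the anisotropy of the covering), taking $c$ small does not help, and your proposal never addresses it.

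Moreover, the mechanism you sketch --- splitting $\bw=\lambda\bw'+(1-\lambda)\bw''$ and intersecting nondivergence information for the two weighted flows $g^{\bw'},g^{\bw''}$ in the hope that their dangerous parallelepipeds have complementary anisotropy --- is both unsubstantiated (you explicitly defer ``essentially all the work'' to setting it up) and different from what the paper actually does, and I see no reason it works as stated, since the dangerous sets for $g^{\bw}$ are not controlled by those of the two factors and the hypothesis of the nondivergence theorem is still unverified. In the paper the second action is not a weighted flow of your type: it is the family $d_\ell$ of \eqref{eq:dl}, which contracts the denominator coordinate together with the low-weight coordinates and expands only the maximal-weight directions. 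Alice blocks, with a delay built into the potential game (balls of radius $\beta^{n+1+(s-1)\ell}r_0$, which is precisely how the large covering numbers are absorbed into the budget), the sets where $d_\ell a_{n+1+s\ell}u_{\bx}\bbz^{d+1}$ leaves $K_{\beta^{\eta\ell}}$; and the hypotheses \eqref{eq:firstAssumption} and \eqref{eq:secondAssumption} needed to apply nondivergence to one such set are themselves avoidance conditions for \emph{other members of the same two-parameter family}, which Bob's earlier legal moves guarantee inductively, the case $n=0$ being handled unconditionally via Lemma~\ref{thm:KLW3.3}. This bootstrapping, made quantitative in Lemma~\ref{lem:keyLemma}, is the content of the theorem; without it, or a fully worked-out substitute, your covering estimate, and hence the winning strategy, is missing.
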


Over the last two decades Schmidt's conjecture motivated significant amount of research concerning badly approximable points in fractals, starting with Pollington and Velani \cite{PollingtonVelani02} and Kleinbock and Weiss \cite{KleinbockWeiss1}.
Initial progress towards Theorem~\ref{thm:intersection} was made in \cite{KleinbockWeiss1} for $\bw=\bw_d$
and in \cite{KTV}, where \eqref{eq:intersection} was proved for product measures $\mu=\mu_1\times\cdots\times\mu_d$ with each $\mu_i$ being Ahlfors regular. Other notable developments include those by Fishman \cite{fishman} and Kleinbock and Weiss \cite{KleinbockWeiss2}.

The tools used in the proof of Theorems~\ref{thm:HAW} and \ref{thm:intersection} are the Cantor potential game which was introduced by Badziahin, Harrap, Simmons and the second named author \cite{BHNS}, and the quantitative nondivergence estimate for ``friendly'' measures due to Kleinbock, Lindestrauss and Weiss \cite{KLW}, albeit, within this paper, the latter is only applied in the context of Ahlfors regular absolutely decaying measures.

In order to shed some light on the new ideas involved in the proof of Theorem~\ref{thm:HAW},
it is useful to compare the results in this paper to those of \cite{BNY1} and several preceding publications, which deal with badly approximable points on nondegenerate curves in $\R^d$. For simplicity we restrict our discussion to analytic nondegenerate curves. Let $\bf:I_0\to\bbr^d$ be an \emph{analytic nondegenerate map} defined on an interval $I_0\subq\R$. By definition, this means that the coordinate functions $f_1,\dots,f_d$ are analytic and together with the constant function $1$ are linearly independent over $\bbr$. The map $\vv f$ should be understood as the parameterisation of a curve $\cC$ in $\R^d$, namely $\cC=\vv f(I_0)$. In this case, the set $\bf^{-1}\left(\badw\right)$ precisely consists of the parameters $x\in I_0$ for which the corresponding point $\vv f(x)$ on the curve $\cC$ is badly approximable with respect to the weight $\bw$. For $d=2$ Badziahin and Velani \cite{BadziahinVelani2} proved that $\bf^{-1}\left(\badw\right)$ is
Cantor winning for every $\bw\in\calw_2$. This property was then improved to `winning' by An, Velani and the first named author \cite{ABV}. In fact, the `winning' property can be strengthened to `absolute winning' on applying \cite[Appendix B]{N13}, see also \cite[Remark 7]{ABV}. For higher dimensions, the first named author \cite{Beresnevich_BA}
proved that for every $\bw\in\calw_d$ the set $\bf^{-1}\left(\badw\right)$ is Cantor winning (see also \cite[Theorem B]{BadziahinHarrap}). This result was then improved by the third named author \cite{Yang} in the following manner.
By Definition~\ref{def:cantorWinning}, a Cantor winning set in $\bbr^d$ is $\alpha$-Cantor winning for
some $0\leq \alpha < d$. In \cite{Beresnevich_BA} the parameter $\alpha$ depends on $\bw$,
while in \cite{Yang} it was shown that $\bf^{-1}\left(\badw\right)$ is $\alpha$-Cantor winning
for some $0\leq \alpha < d$ that depends only on $d$. Eventually, the argument of \cite{BNY1}
strengthened the conclusions of \cite{Yang} to completely remove the dependence of $\alpha$ on $d$.
While it does not do it explicitly, it does so essentially by allowing the Cantor potential game to be played on
the support of any Ahlfors regular measure on $I_0$.
By \cite[Theorem 1.5]{BHNS} this implies that $\bf^{-1}\left(\badw\right)$ is absolute winning.
%\red{SHALL WE ADD A COMMENT ON 2 ACTIONS AND HOW THE SECOND ACTION IS DIFFERENT TO THAT IN THE CURVES PAPER?}

\subsection*{{Organisation of this paper}}
{In} Section~\ref{sec:schmidt} {we} recall the relevant
variants of Schmidt's game, definitions in fractal measure theory and establish the equivalence
between Theorem~\ref{thm:HAW} and Theorem~\ref{thm:intersection}. {In} Section~\ref{sec:daniCorrespondence}
{we} recall the Dani correspondence and the Kleinbock, Lindenstrauss, Weiss quantitative nondivergence.
The proof of Theorem~\ref{thm:intersection} is finally given in Section~\ref{sec:proof}.

\subsection*{Notation and conventions\label{conventions}.} Throughout this paper, we {will use} the following notation.
{Given a} metric space $(X,\dist)$, any $S\subq X$ {and} $r>0$, we denote the closed $r$ neighborhood of $S$
by
\[
B(S,r) := \left\{x\in X\sep \dist(x,S) \leq r\right\}.
\]
A closed ball $B(x_0,r):= \left\{x\in X\sep \dist(x,x_0) \leq r\right\}$
is defined by a fixed centre $x_0$ and radius $r>0$, although these in general are not uniquely 
determined by the ball as a set. In view of the latter, when referring to a ball $B$ we will mean the
 pair of its centre and radius and, with some abuse of notation, $B$ will also mean the corresponding set of 
 points when appearing in set theoretic expressions. The same will apply to the more general notion of $r$ neighborhood of a set $S$.
Finally, for any $\bx\in\bbr^d$, $r>0$ {and} $c>0$, {we let}
\[
cB\left(\bx,r\right) := B\left(\bx,cr\right).
\]

\section{Schmidt games and intersections with fractals}\label{sec:schmidt}

Schmidt's game is a quantitative version of the Banach-Mazur game played on a complete
metric space. Its corresponding winning sets are dense and often have large Hausdorff
dimension. Moreover, by definition, the collection of all $\alpha$-winning sets is stable under taking countable intersections, where $\alpha\in(0,1)$ is a certain parameter of Schmidt's games. Schmidt's winning sets are also stable under affine transformations, although the parameter $\alpha$ may change. Schmidt's game was introduced in \cite{schmidt1} and used to strengthen and simplify earlier results in Diophantine approximation. There are several modifications of Schmidt's game resulting in alternative notions of winning sets. These include the notions of absolute winning sets \cite{McMullen_absolute_winning}, HAW sets \cite{BFKRW} and Cantor winning sets \cite{BadziahinHarrap}. For a detailed survey of the various winning sets, their properties and the connections between them, see \cite{BHNS} and \cite{BFKRW}.
In particular, in \cite{BFKRW} it is proved that HAW sets are $\alpha$-winning for any $0<\alpha<\frac12$.

\begin{defn}[Hyperplane absolute winning game and sets, {\cite[\S2]{BFKRW}}]\label{def:hyperplaneWinning+}
The \emph{hyperplane absolute game} on $\bbr^d$  is played by two players, say Alice and Bob, who take turns making their moves. Bob starts by choosing a {\em parameter} $0<\beta<1/3$, which is fixed throughout the game, and a ball $B_0\subq \bbr^d$ of radius $r_0>0$. Subsequently for $n=0,1,2,\dots$, first, Alice chooses a neighborhood $A_{n+1}$ of any hyperplane in $\R^d$ of radius $\eps r_n$ for some $0<\eps\le \beta$; and second, Bob chooses a ball $B_{n+1} \subq B_n\setminus A_{n+1}$ of radius $r_{n+1}\ge\beta r_n$, where $r_n$ is the radius of $B_n$.
\par\noindent A set $S\subq \bbr^d$ is called \emph{hyperplane absolute winning \emph{(abbr. \emph{HAW})}} if Alice has a strategy which ensures that $S\cap \bigcap_{n\geq0}B_n\neq\varnothing$.
\end{defn}

For the purposes of this paper it will be convenient to use the following modified version of the hyperplane absolute game.

\begin{defn}\label{def:hyperplaneWinning}
The \emph{restricted hyperplane absolute game} on $\bbr^d$  is played by two players, say Alice and Bob, who take turns making their moves. Bob starts by choosing a {\em parameter} $0<\beta<1$, which is fixed throughout the game, and a ball $B_0\subq \bbr^d$ of radius $r_0>0$. Subsequently for $n=0,1,2,\dots$, first, Alice chooses a neighborhood $A_{n+1}$ of some hyperplane in $\R^d$ of radius $\beta r_n=\beta^{n+1}r_0$; and second, Bob chooses a ball $B_{n+1}\subq B_n\setminus A_{n+1}$ of radius $r_{n+1}=\beta r_n=\beta^{n+1}r_0$, where $r_n$ is the radius of $B_n$. If there is no such ball the game stops and Alice wins by default. Otherwise, the \emph{outcome} of the game is the unique point in $\bigcap_{n\geq0}B_n$.
\par\noindent A set $S\subq \bbr^d$ will be called \emph{restricted hyperplane absolute winning} if Alice has a strategy which
ensures that she either wins by default or the outcome lies in $S$.
\end{defn}

We note that there is no difference between HAW sets and restricted HAW sets and therefore throughout the rest of paper we will refer to the restricted hyperplane absolute game as the hyperplane absolute game, and to restricted hyperplane absolute winning sets as HAW sets. This fact was previously shown in \cite{FSU4} in relation to the absolute game. Formally, we have the following statement.

\begin{prop}\label{prop:modifiedHAW}
Let $S\subq\R^d$. Then $S$ is restricted hyperplane absolute winning if and only if $S$ is hyperplane absolute winning.
\end{prop}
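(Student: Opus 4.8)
The plan is to prove both implications by showing that a winning strategy in one game can be transformed into a winning strategy in the other. The easier direction is to deduce the restricted game's winnability from the (unrestricted) HAW property: if Alice wins the restricted hyperplane absolute game, she in particular wins a more permissive game for Alice, so one needs to see that a restricted-game strategy already works in the unrestricted game. Here the subtlety is that in the unrestricted game Bob is allowed to shrink less aggressively (any $r_{n+1}\ge\beta r_n$ rather than exactly $\beta r_n$) and Alice is allowed to delete smaller neighbourhoods ($\eps r_n$ for any $0<\eps\le\beta$). The standard trick is for Alice to \emph{refine} Bob's unrestricted play into a virtual restricted play: whenever Bob makes an unrestricted move, Alice mentally inserts a finite chain of intermediate balls, each obtained by shrinking by exactly the factor $\beta$, until she reaches a ball of radius comparable to $r_{n+1}$; she runs her restricted-game strategy against this finer sequence, and plays in the real game the hyperplane neighbourhood prescribed by the restricted strategy at the appropriate step (padding it out to the size the unrestricted rules demand, which only helps her). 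One must check that the nested intersection of Bob's real balls equals the nested intersection of the virtual restricted balls, so that the outcome lands in $S$; this is where one uses that radii go to zero and that each real ball contains a virtual ball and vice versa up to the next stage.

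For the reverse direction—HAW implies restricted HAW—the point is essentially trivial once the framework is set up: the restricted game is a special case of the unrestricted game in which Bob commits to always shrinking by exactly $\beta$ and Alice commits to always using the maximal radius $\beta r_n$. So any winning strategy for Alice in the unrestricted HAW game, when specialised to the restricted move set, still produces a point in $S$ (or Alice wins by default when Bob has no legal move, which can only help her). The one genuine check is the ``wins by default'' clause: in the restricted game Alice wins if Bob cannot find a legal ball $B_{n+1}\subq B_n\setminus A_{n+1}$ of the prescribed radius, whereas in the unrestricted game this situation cannot arise (Bob can always find such a ball because $\beta<1/3$ guarantees room outside a hyperplane neighbourhood of radius $\beta r_n$). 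So in translating an unrestricted strategy to the restricted setting, the default-win outcomes are simply extra ways for Alice to win and do not obstruct anything.

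The main obstacle, and the place where care is needed, is the bookkeeping in the first direction: matching up the two move sequences so that (i) Alice's restricted strategy is always being fed a legitimate restricted play, (ii) the hyperplane neighbourhoods she is instructed to remove can be legally removed in the unrestricted game at the right scale (this needs $\beta<1/3$ in the unrestricted game so that Bob always has a legal response, and it needs the radii in the virtual play to stay synchronised with the real radii up to bounded multiplicative error), and (iii) the final intersection points coincide. A clean way to organise this is to track, after each of Bob's real moves, a ``current virtual ball'' $B'$ with $B'\subq B_n$ and $r(B')\in[\beta r_n, r_n)$, update it by the restricted rules, and observe that $\bigcap_n B_n=\bigcap_n B'_n$ since each sequence is eventually sandwiched between shifts of the other. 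I would present this as a single lemma-free argument inside the proof of Proposition~\ref{prop:modifiedHAW}, citing \cite{FSU4} for the analogous statement in the absolute game and noting that the hyperplane case is identical mutatis mutandis.
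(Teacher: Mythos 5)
Your overall architecture (specialise the unrestricted strategy for the easy direction; simulate a restricted play alongside Bob's real play for the hard direction) is the same in spirit as the paper's proof, which follows \cite[Proposition~4.5]{FSU4}. But the simulation scheme you describe has a genuine gap. You keep a ``current virtual ball'' $B'$ with $B'\subq B_n$ and $r(B')\in[\beta r_n,r_n)$, i.e.\ the virtual ball sits \emph{inside} Bob's real ball and the virtual game runs at the \emph{same} parameter $\beta$. That cannot work: Bob's next real ball only has to lie in $B_n\setminus A_{n+1}$, so it can leave $B'$ entirely, and then there is no legal next virtual ball (it would have to lie in $B'$, which may be disjoint from $B_{n+1}$). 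To force Bob's real balls to track the virtual play, the virtual ball must \emph{contain} the real ball; but then at parameter $\beta$ the restricted strategy prescribes a hyperplane neighbourhood of radius $\beta\cdot r(B')>\beta r_n$, which Alice is not allowed to play in the real game -- your remark that she can pad ``to the size the unrestricted rules demand, which only helps her'' is backwards, since the rules cap her move at $\beta r_n$. This is exactly why the paper runs the virtual restricted game at the strictly smaller parameter $(\beta/2)^2$: it feeds Bob's ball $B(\bx_n,r_n)$ into a positional restricted strategy applied to $B\bigl(\bx_n,(\beta/2)^{2m_{r_n}}\bigr)$ with $(\beta/2)^{2m_{r_n}+1}\le r_n<(\beta/2)^{2m_{r_n}-1}$, plays the prescribed neighbourhood thickened by a factor $2$ (still of radius $\le\beta r_n$, hence legal), and then extracts one real ball per scale window to exhibit a legal restricted play. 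Two further ingredients you omit are needed to make this rigorous: a \emph{positional} winning strategy for the restricted game (Schmidt's theorem), so that the simulated play is well defined from Bob's current ball alone even when several real moves fall in the same scale window; and the observation that $S$ must be dense, which handles unrestricted plays whose radii do not tend to zero (your appeal to ``radii go to zero'' is not justified in the unrestricted game).

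Two smaller points. In the easy direction the restricted game allows any $\beta\in(0,1)$, not just $\beta<1/3$, so the unrestricted strategy does not directly apply when $\beta\ge 1/3$; the paper disposes of this case by noting Alice then wins by default on her first move by removing the $\beta r_0$-neighbourhood of a hyperplane through the centre of $B_0$. Also, you call restricted $\Rightarrow$ HAW the ``easier direction''; it is the harder one, and the labelling confusion carries into your write-up.
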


The proof of this proposition is essentially the same as that of Proposition~4.5 in \cite{FSU4}. Indeed, the only change that is needed to obtain the proof of Proposition~\ref{prop:modifiedHAW} from the proof of Proposition~4.5 in \cite{FSU4} is to replace neighborhoods of balls, which are legal moves in the game played in \cite[Proposition~4.5 ]{FSU4} by neighborhoods of hyperplanes, which are Alice's legal moves in Definitions~\ref{def:hyperplaneWinning+} and \ref{def:hyperplaneWinning}. However, as Proposition~\ref{prop:modifiedHAW} forms a step in the argument towards our final goal, we give it a complete formal proof in the appendix at the end of this paper.

\medskip

In order to reduce Theorem~\ref{thm:HAW} to Theorem~\ref{thm:intersection}, let us recall the definitions of Ahlfors regular measures and absolute decaying measures, which can be found, for instance, in \cite{BFKRW}.

\begin{defn}\label{def:ahlfors}
Let $X$ be a metric space. Given $\alpha>0$, a Borel measure $\mu$ on $X$ is \emph{$\alpha$-Ahlfors
    regular} if there exist $A, \rho_0>0$ such that for every $\bx\in\supp\mu$
    \begin{equation}
    A^{-1}r^{\alpha} \leq \mu\left(B\left(\bx,r\right)\right) \leq Ar^{\alpha}\qquad\text{for all $0<r\leq \rho_0$}\,.\label{eq:AhlforsRegular}
    \end{equation}
    We say that $\mu$ is \emph{Ahlfors regular} if it is $\alpha$-Ahlfors regular for some $\alpha >0$.
\end{defn}

\begin{defn}\label{def:absolutelyDecaying}
    A Borel measure $\mu$ on $\bbr^{d}$ is called \emph{absolutely decaying}
    if there exist $D,\delta>0$ and $r_0>0$ such that for every $\bx\in \support\mu$, $0<r\leq r_0$,
    every hyperplane $H\subq\bbr^{d}$ and $r'>0$ we have that
    \begin{equation}\label{eq:absolutelyDecaying}
    \mu\left(B\left(H,r'\right)\cap B\left(\bx,r\right)\right) \leq D\left(\frac{r'}{r}\right)^{\delta}\mu\left(B\left(\bx,r\right)\right).
    \end{equation}
    \end{defn}

The following proposition allows us to reduce Theorem~\ref{thm:HAW} to Theorem~\ref{thm:intersection}. This proposition is already hinted in \cite[Remark 4.5]{BHNS}.

\begin{prop}\label{prop:known}
If $S\subq\bbr^d$ is HAW then $S\cap\support\mu\neq\varnothing$ for any Ahlfors regular absolutely decaying measure $\mu$ on $\bbr^d$.
Conversely, if $S$ is Borel and $S\cap\support\mu\neq\varnothing$ for any compactly supported Ahlfors regular absolutely decaying measure $\mu$ on $\bbr^d$, then $S\subq\bbr^d$ is HAW.
\end{prop}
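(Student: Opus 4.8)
The plan is to prove the two implications of Proposition~\ref{prop:known} separately, using the characterisation of HAW via the (restricted) hyperplane absolute game and the two structural hypotheses on $\mu$ — Ahlfors regularity controlling the sizes of balls, and absolute decay controlling how much mass lies near a hyperplane.

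\medskip

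\noindent\textbf{First implication.} Suppose $S$ is HAW and let $\mu$ be an $\alpha$-Ahlfors regular absolutely decaying measure on $\R^d$, with constants $A,\rho_0$ from \eqref{eq:AhlforsRegular} and $D,\delta,r_0$ from \eqref{eq:absolutelyDecaying}. I would have Alice play the hyperplane absolute game with a parameter $\beta$ chosen small enough that a single neighborhood $A_{n+1}=B(H_{n+1},\beta r_n)$ of a hyperplane can never contain a positive proportion of the $\mu$-mass of $B_n$: precisely, by \eqref{eq:absolutelyDecaying}, $\mu(A_{n+1}\cap B_n)\le D\beta^{\delta}\mu(B_n)$, so fixing $\beta$ with $D\beta^{\delta}<\tfrac12$ guarantees that $\mu(B_n\setminus A_{n+1})\ge\tfrac12\mu(B_n)>0$ for every move, provided we keep the centres of the $B_n$ inside $\supp\mu$ and the radii below $\min(\rho_0,r_0)$. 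Concretely: Bob opens with $\beta$ as above and $B_0$ centred at a point of $\supp\mu$ of radius $r_0'<\min(\rho_0,r_0)$; after Alice reveals $A_{n+1}$, the set $B_n\setminus A_{n+1}$ has positive $\mu$-measure, hence meets $\supp\mu$, so Bob can always pick a legal ball $B_{n+1}\subseteq B_n\setminus A_{n+1}$ of radius $\beta r_n$ centred at a point of $\supp\mu$. This is a legal play for Bob in which every $B_n$ is centred in $\supp\mu$; since Alice has a winning strategy for $S$, the unique outcome point lies in $S$, and being a limit of centres in the closed set $\supp\mu$ it also lies in $\supp\mu$. Hence $S\cap\supp\mu\ne\varnothing$. (The only subtlety is that Bob's forced move must stay inside $\supp\mu$; the positivity $\mu(B_n\setminus A_{n+1})>0$ is exactly what makes this possible.)

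\medskip

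\noindent\textbf{Second implication.} This is the harder direction and is where I expect the main work to be. Assume $S$ is Borel and meets the support of every compactly supported Ahlfors regular absolutely decaying measure; we must produce a winning strategy for Alice. Suppose for contradiction that Alice has no winning strategy, equivalently — by the determinacy-type reasoning available for these absolute games (cf.\ \cite{BFKRW,McMullen_absolute_winning}) — that Bob has a strategy forcing the outcome to avoid $S$, for some fixed parameter $\beta$. The idea is to run Bob's strategy against \emph{all} of Alice's legal moves simultaneously and thereby build a Cantor-type subset $K$ of the initial ball together with a natural measure $\mu$ on $K$ which is Ahlfors regular and absolutely decaying, yet $K$ is disjoint from $S$, contradicting the hypothesis. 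At stage $n$ one has finitely many balls $B_n^{(j)}$ of radius $\beta^n r_0$; inside each, Alice is allowed to delete the $\beta$-neighborhood of any one hyperplane, and Bob's strategy then dictates a child ball. One chooses a fixed, suitably spread finite family of hyperplanes (say a bounded-density family of parallel translates in each of finitely many coordinate directions) so that for \emph{every} choice of deleted hyperplane-neighborhood, a definite number of disjoint sub-balls of radius $\beta^{n+1}r_0$ survive inside $B_n^{(j)}\setminus A_{n+1}$ — this uses that removing one hyperplane-slab of width $\beta\cdot\beta^n r_0$ from a ball of radius $\beta^n r_0$ leaves a definite proportion of a $\beta$-separated grid of smaller balls. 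Taking all these surviving balls over all Alice-moves as the children, iterating, and letting $\mu$ be the uniform measure on the resulting Cantor set gives $\mu(B_n^{(j)})\asymp(\#\text{children})^{-n}$ against radius $\beta^n r_0$, which is Ahlfors regular with exponent $\alpha=\log(\#\text{children})/\log(1/\beta)$; and absolute decay holds because any hyperplane-neighborhood of width $r'$ at scale $\beta^n r_0$ can, by the spread of our chosen grid, meet only a bounded fraction of the children at each of the next $\approx\log(\beta^n r_0/r')/\log(1/\beta)$ levels, giving the required power-law bound $(r'/(\beta^n r_0))^{\delta}$. Since every branch of $K$ is, by construction, a legal Bob-play against some Alice-play, its outcome avoids $S$; thus $K\cap S=\varnothing$, so $\supp\mu\cap S=\varnothing$, contradicting the hypothesis. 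Therefore Alice must have a winning strategy and $S$ is HAW.

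\medskip

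\noindent The main obstacle, as indicated, is the combinatorial-geometric bookkeeping in the second implication: one must pin down a concrete finite family of hyperplanes at each scale so that (i) \emph{whatever} hyperplane Alice deletes, enough disjoint child balls survive (so the branching number, and hence the Ahlfors exponent, is uniformly bounded below), and (ii) the surviving balls are spread out enough across directions that no later hyperplane-neighborhood captures too many of them, yielding the absolute-decay estimate with uniform constants $D,\delta$. I would handle (i) by placing, inside each ball of radius $\rho$, a $c\beta\rho$-separated grid of balls of radius $\beta\rho$ for a small absolute constant $c$, and observing that one hyperplane-slab of thickness $2\beta\rho$ meets only an $O(\beta^{?})$-independent, in fact bounded, fraction of grid points in the relevant slab — leaving $\gg 1$ (indeed a fixed positive proportion) of them; and (ii) by the standard computation that a hyperplane at distance scale $r'$ intersects $O((\beta^{-k}))$-but-not-all children across $k$ levels down to scale $r'$. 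Routine but careful constant-chasing of this type is the only real content beyond the conceptual scheme above; everything else — the reduction to the restricted game via Proposition~\ref{prop:modifiedHAW}, and the fact that limits of centres in $\supp\mu$ remain in $\supp\mu$ — is immediate.
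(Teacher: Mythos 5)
Your first implication is essentially fine in spirit, but note that positivity of $\mu(B_n\setminus A_{n+1})$ alone does not hand Bob a legal move: a point of $\supp\mu$ in $B_n\setminus A_{n+1}$ may sit so close to the boundary of $B_n$ or of $A_{n+1}$ that the ball of radius $\beta r_n$ around it is not contained in $B_n\setminus A_{n+1}$. You need a margin argument -- apply \eqref{eq:absolutelyDecaying} to the shrunken ball $B(\bx_n,(1-\beta)r_n)$ with the hyperplane neighborhood thickened to radius $2\beta r_n$, so that the chosen support point $\by$ satisfies $B(\by,\beta r_n)\subq B_n\setminus A_{n+1}$. This is exactly the content of the hyperplane-diffuseness lemma (Lemma~\ref{lem:BFKRW4.3}) that the paper routes through; it is a small, fixable gap.

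The second implication has a genuine gap at its core. Your Cantor construction tries to get two incompatible things from the same children. If the children at each node are the ``surviving grid balls,'' then each branch is merely \emph{a} legal Bob-play, not a play following Bob's \emph{winning} strategy, so there is no reason its outcome avoids $S$ and the conclusion $K\cap S=\varnothing$ fails. If instead the children are Bob's strategy responses to finitely many Alice moves (which is what makes $K\cap S=\varnothing$ work), then your grid-based estimates collapse: Bob's strategy can place all of its response balls near a single hyperplane, so neither the claimed branching count nor, crucially, the absolute-decay estimate for the limiting measure follows. Two ingredients you are missing are exactly what the paper supplies. First, a compactness covering lemma (Lemma~\ref{lem:5.6}, from \cite{FSU4}): for each ball there are at most $N$ hyperplanes whose $\beta\rho$-neighborhoods dominate, inside that ball, the $\beta'\rho$-neighborhood of \emph{every} hyperplane; your fixed family of axis-parallel translates does not have this property, and without it one cannot show the resulting strategy-built set $K$ is hyperplane diffuse (an arbitrary hyperplane at a generic angle is not controlled by finitely many prescribed directions). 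Second, the passage from a hyperplane diffuse set $K$ with $K\cap S=\varnothing$ to a compactly supported Ahlfors regular \emph{absolutely decaying} measure supported inside $K$ is a separate construction (the paper's Proposition~\ref{measure}), which uses diffuseness to choose $d+1$ points in general position at each scale and the general-position Lemma~\ref{lem:lemma5.6} to guarantee that any hyperplane neighborhood misses at least one child; the measure on the raw strategy tree need not be absolutely decaying. Finally, the determinacy you invoke is not in \cite{BFKRW} or \cite{McMullen_absolute_winning}; the paper uses the Borel determinacy result for the absolute game from \cite[Theorem 1.6]{FLS}, and the Borel hypothesis on $S$ enters precisely there.
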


Proposition~\ref{prop:known} has the following equivalent formulation, stated as Proposition~\ref{prop:equiv}, which does not use measures and is slightly easier to prove. First, recall the following definition appearing in \cite{BFKRW}.

\begin{defn}\label{def:hyperplane diffuse}
A nonempty closed subset $K\subq\bbr^{d}$ is called \emph{hyperplane diffuse}
if there exists $\beta>0$ and $r_0>0$ such that for every $\bx\in K$, $0<r\leq r_0$ and
every hyperplane $H\subq\bbr^{d}$ we have that
\begin{equation}\label{eq:hyperplane diffuse}
K\cap\left(B\left(\bx,r\right)\setminus B\left(H,\beta r\right)\right) \neq\varnothing\,.
\end{equation}
\end{defn}

\begin{prop}\label{prop:equiv}
If $S\subq\bbr^d$ is HAW then $S\cap K\neq\varnothing$ for any hyperplane diffuse set $K\subq\bbr^d$. Moreover, if $S$ is Borel then the converse also holds.
\end{prop}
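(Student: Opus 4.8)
The plan is to prove the two directions separately, exploiting the duality between Alice's winning strategy in the hyperplane absolute game and the combinatorial structure of a hyperplane diffuse set. For the forward direction, suppose $S$ is HAW and let $K$ be hyperplane diffuse with constants $\beta_K$ and $r_0$. I would have Bob play the restricted hyperplane absolute game with parameter $\beta\leq\beta_K$ (shrinking $\beta$ if necessary so that $\beta<\beta_K$, and noting that HAW is insensitive to the value of the parameter), always keeping the centres of his balls $B_n$ inside $K$: this is possible precisely because \eqref{eq:hyperplane diffuse} guarantees that for the centre $\bx_n\in K$ of $B_n$ and Alice's hyperplane neighborhood $A_{n+1}=B(H_{n+1},\beta r_n)$, the set $K\cap(B(\bx_n,r_n)\setminus B(H_{n+1},\beta r_n))$ is nonempty, so Bob can pick a point $\bx_{n+1}$ there and let $B_{n+1}=B(\bx_{n+1},\beta r_n)\subq B_n\setminus A_{n+1}$ (here I should check the radius bookkeeping: since $\bx_{n+1}\in B(\bx_n,r_n)$ and $\dist(\bx_{n+1},H_{n+1})>\beta r_n$, the ball $B(\bx_{n+1},\beta r_n)$ avoids $A_{n+1}$ provided we use a slightly smaller radius or a strict inequality — a minor technical adjustment, e.g. passing to $\beta/2$). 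Since $K$ is closed and the outcome point is the limit of the $\bx_n\in K$, the outcome lies in $K$. As $S$ is HAW, Alice can force the outcome into $S$, hence the outcome lies in $S\cap K$, proving it is nonempty.

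For the converse, assume $S$ is Borel and $S\cap K\neq\varnothing$ for every hyperplane diffuse $K$; I want to produce a winning strategy for Alice in the hyperplane absolute game. The natural idea is to fix Bob's initial parameter $\beta$ and ball $B_0$, and then consider the tree of all possible plays: Alice will choose, at each stage, her hyperplane neighborhood according to some measurable selection, and the key observation is that the set $K$ of all outcome points arising from a fixed strategy of Alice (as Bob varies his legal moves) is a hyperplane diffuse set. More precisely, given Alice's strategy $\sigma$, define $K_\sigma$ to be the closure of the set of outcomes $\bigcap_n B_n$ over all legal Bob-plays against $\sigma$. One checks that $K_\sigma$ satisfies \eqref{eq:hyperplane diffuse} with the same $\beta$: for any outcome-approximating ball $B_n=B(\bx,r_n)$ in some play and any hyperplane $H$, Bob has the legal option of choosing a sub-ball inside $B_n\setminus B(H,\beta r_n)$ (this option exists because Alice's move $A_{n+1}$ is a $\beta r_n$-neighborhood of a single hyperplane, and $B_n$ has radius $r_n>0$, so for $\beta$ small there is room), and any outcome downstream of that choice lies in $K_\sigma\cap(B(\bx,r_n)\setminus B(H,\beta r_n))$. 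Then since $S\cap K_\sigma\neq\varnothing$, Alice can in fact steer toward a point of $S$: the honest way to phrase this is that we do not fix $\sigma$ in advance but build it by a fusion/back-and-forth argument, at each finite stage committing to hyperplane moves that keep a nonempty relatively-open (in the relevant hyperplane-diffuse "limit set") target alive, using that $S$ is Borel to invoke the appropriate determinacy/selection. Alternatively, and more cleanly, one cites the equivalence already recorded in Proposition~\ref{prop:known} together with the fact that the support of an Ahlfors regular absolutely decaying measure is hyperplane diffuse and, conversely, every hyperplane diffuse set carries such a measure --- but since the point of Proposition~\ref{prop:equiv} is to be the measure-free restatement, I would give the direct game-theoretic argument.

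The main obstacle is the converse direction: turning the hypothesis "$S$ meets every hyperplane diffuse set" into an actual strategy for Alice requires knowing that Alice can commit to her moves incrementally while keeping open the possibility of hitting $S$, which is where the Borel hypothesis on $S$ enters (through a measurable-selection or Borel-determinacy type argument applied to the game tree). Concretely, I would proceed by constructing, for each $n$, a nested sequence of "candidate" hyperplane-diffuse limit sets $K_0\supseteq K_1\supseteq\cdots$, each still hyperplane diffuse with the fixed $\beta$, such that $S$ meets $\bigcap_n K_n$; Alice's $n$th move is read off from the passage $K_{n-1}\rightsquigarrow K_n$. Verifying that the intersection remains a legitimate target --- i.e. that one can carry out this construction so that $\bigcap_n K_n$ is nonempty and still "rich enough" that $S\cap\bigcap_n K_n\neq\varnothing$ --- is the delicate compactness step, and is exactly the point where one uses that the $K_n$ are closed subsets of a fixed compact ball with uniformly controlled diffuseness. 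I expect the forward direction to be essentially immediate once the radius bookkeeping is handled, and essentially all the content to be in setting up this fusion argument for the converse; in the write-up I would likely isolate the claim "the outcome set of any strategy, or any suitably chosen nested family of sub-games, is hyperplane diffuse" as the one nontrivial lemma and then deduce both directions from it.
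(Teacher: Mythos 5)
Your forward direction is essentially the paper's argument: Bob keeps the centres of his balls in $K$, and the radius bookkeeping you flag is exactly what Lemma~\ref{lem:BFKRW4.3} (i.e.\ \cite[Lemma 4.3]{BFKRW}) supplies, so that half is fine modulo quoting that lemma instead of the raw definition \eqref{eq:hyperplane diffuse}.

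The converse, however, has a genuine gap. Your central step --- that for a fixed Alice strategy $\sigma$ the outcome set $K_\sigma$ is hyperplane diffuse, and that since $S\cap K_\sigma\neq\varnothing$ ``Alice can steer toward a point of $S$'' --- is a non sequitur: the existence of one play against $\sigma$ whose outcome lies in $S$ does not make $\sigma$ winning, because Bob, not Alice, chooses which branch of the game tree is realised; if this inference were valid, every strategy of Alice would be winning and the Borel hypothesis would never be used. The ``fusion/measurable selection'' sketch does not repair this, since at each finite stage Alice must commit to a single hyperplane without knowing Bob's future moves, and nothing in your construction prevents Bob from leaving whatever target set you are trying to preserve. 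The paper runs the duality in the opposite direction: it argues by contraposition, assuming $S$ is Borel and \emph{not} HAW, and invokes Borel determinacy for this game (\cite[Theorem 1.6]{FLS}) to obtain a winning strategy for \emph{Bob}. The second ingredient missing from your outline is the discretisation Lemma~\ref{lem:5.6} (from \cite{FSU4}): for each ball $B$ there is a collection $\calh_B$ of at most $N$ hyperplanes such that any hyperplane neighbourhood at the slightly smaller scale $\beta'\rho$ is absorbed into the $\beta\rho$-neighbourhood of some $H\in\calh_B$. Playing Bob's winning strategy only against these finitely many representative moves produces a finitely branching tree of balls whose limit set $K$ in \eqref{K} is closed, nonempty, consists entirely of outcomes of plays won by Bob (hence $K\cap S=\varnothing$), and is hyperplane diffuse with constant $\tfrac{\beta\beta'}{2}$, precisely because every hyperplane $H'$ is dominated by some representative $H\in\calh_B$ and Bob's recorded response to $H$ yields a point of $K$ inside $B(\bx,r)$ but outside $B\left(H',\tfrac{\beta\beta'}{2}r\right)$. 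Without the contrapositive-plus-determinacy step and without this finite discretisation of Alice's moves, your argument for the converse cannot be completed.
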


Only the second parts of Propositions~\ref{prop:known} and \ref{prop:equiv} are new. Indeed, \cite[Proposition 5.5]{BFKRW} 
further proves a lower bound on the dimension of the intersection of HAW sets with the support of decaying measures, 
and an analogous statement about the intersection of HAW sets with hyperplane diffuse sets follows from \cite[Proposition 5.5]{BFKRW}. 
A direct proof of the first implication in Proposition~\ref{prop:equiv} is provided below for the reader's convenience.

The equivalence between Propositions~\ref{prop:known} and \ref{prop:equiv} follows from the fact that if $\mu$ is absolutely decaying then $\support\mu$ is hyperplane diffuse \cite[Proposition 5.1]{BFKRW}, and, on the other hand, if $K$ is hyperplane diffuse then there exists an Ahlfors regular absolutely decaying measure $\mu$ for which $\support\mu\subq K$. The latter can be shown on modifying the proof of \cite[Proposition~5.5]{BFKRW}, where an absolutely decaying measure $\mu$ is constructed. Formally, we have the following statement.

\begin{prop}\label{measure}
Let $K\subq\bbr^{d}$ be hyperplane diffuse. Then there exists a compactly supported absolutely decaying Ahlfors regular measure $\mu$ on $\bbr^d$ such that $\supp\mu\subq K$.
\end{prop}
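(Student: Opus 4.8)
The plan is to build $\mu$ by a Cantor-type construction inside $K$, at each stage subdividing a chosen collection of balls into a bounded number of well-separated sub-balls of a fixed smaller scale, exactly as in the proof of \cite[Proposition~5.5]{BFKRW}, and to distribute mass uniformly across the surviving balls. Concretely, fix the constants $\beta,r_0$ witnessing that $K$ is hyperplane diffuse, pick a base point $\bx_0\in K$ and an initial radius $r_0$, and choose a scaling ratio $\rho\in(0,1)$ to be determined (it will depend only on $d$ and $\beta$). At stage $k$ we will have a finite family $\mathcal{B}_k$ of disjoint balls of radius $\rho^k r_0$ centred at points of $K$, with pairwise centre distances bounded below by a fixed multiple of $\rho^k r_0$; the limiting set $\bigcap_k \bigcup_{B\in\mathcal B_k} B$ is compact and contained in the closure of $K$, hence in $K$ since $K$ is closed, and $\mu$ is the weak-$*$ limit of the uniform probability measures on the level-$k$ unions.

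First I would carry out the inductive step. Given a ball $B=B(\bx,\rho^k r_0)\in\mathcal B_k$, I iterate the hyperplane-diffuse property finitely many times, at successively smaller scales inside $B$, to extract a large number $N$ of points of $K$ that are mutually $(\beta\,\cdot\,\text{scale})$-separated and each far from the hyperplanes spanned by the others; this is the standard ``fattening'' argument showing a hyperplane diffuse set contains, at every scale, a uniformly perfect-like tree. One must be slightly careful: a single application of \eqref{eq:hyperplane diffuse} only produces one new point avoiding one hyperplane, so to get $N$ points one applies it $N$ times, each time avoiding the hyperplane through the previously chosen points (valid as long as $N\le d$, and for $N>d$ one first passes to a slightly smaller scale and repeats in disjoint sub-balls). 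Shrinking all these points to balls of radius $\rho^{k+1} r_0$ for $\rho$ small enough makes them disjoint and contained in $B$, and I let these be the children of $B$ in $\mathcal B_{k+1}$. Doing this for every $B\in\mathcal B_k$ with the \emph{same} branching number $N$ and the \emph{same} ratio $\rho$ keeps the construction homogeneous.

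Next I would verify the two required properties of the limit measure $\mu$. Ahlfors regularity with exponent $\alpha=\log N/\log(1/\rho)$ follows from the homogeneity: a ball $B(\bx,r)$ with $\rho^{k+1}r_0< r\le \rho^k r_0$ meets a number of level-$k$ balls that is bounded above and below by constants depending only on $d$ and the separation constant (here the doubling geometry of $\R^d$ and the uniform separation of children are what give the upper bound), and each such ball carries mass $N^{-k}$, so $\mu(B(\bx,r))\asymp N^{-k}\asymp r^{\alpha}$; the local nature of this estimate together with compactness of $\supp\mu$ gives uniform constants $A,\rho_0$. For absolute decay, fix a hyperplane $H$, a point $\bx\in\supp\mu$ and scales $r'<r$; choosing $k$ with $\rho^{k}r_0\asymp r$, the mass $\mu(B(H,r')\cap B(\bx,r))$ is controlled by the number of level-$m$ balls (with $\rho^m r_0\asymp r'$) that both lie in $B(\bx,r)$ and meet $B(H,r')$, and the key point is that within each level-$k$ ball the $N$ children were chosen so that at most a controlled fraction of them (say at most $N-1$, i.e. not all) lie within $\beta$ times the scale of any hyperplane. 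Iterating this from scale $r$ down to scale $r'$ gives a geometric decay $(r'/r)^{\delta}$ with $\delta=\log\big(N/(N-1)\big)/\log(1/\rho)>0$ — here it is essential that at \emph{every} level and in \emph{every} ball at least one child escapes every $\beta$-neighbourhood of every hyperplane, which is exactly the repeated use of \eqref{eq:hyperplane diffuse}.

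The main obstacle I anticipate is bookkeeping the constants so that they depend only on $d$, $\beta$, $N$ and $\rho$ and not on the level $k$ or on the particular hyperplane, particularly in the absolute-decay estimate, where one must handle hyperplanes at all orientations simultaneously and make sure the ``at least one child escapes $B(H,\beta\cdot\text{scale})$'' claim is uniform over $H$. This is precisely the content of the hyperplane-diffuse hypothesis, so the estimate goes through, but turning the per-level count of escaping children into a clean geometric bound on $\mu(B(H,r')\cap B(\bx,r))$ requires the standard but slightly technical argument that a $r'$-neighbourhood of a hyperplane, intersected with a ball of radius $\rho^j r_0$, is covered by a bounded number of $\beta$-neighbourhoods of hyperplanes at scale $\rho^j r_0$ — a fact that holds in $\R^d$ with a constant depending only on $d$ and $\beta$. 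Once this covering lemma is in place, the induction on levels closes and yields the desired $\delta$. I would present the construction first, then the covering lemma as a separate short claim, then the two verifications; the diameters and disjointness bounds are routine and I would state them without grinding through the arithmetic.
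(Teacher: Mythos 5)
Your construction (a homogeneous Cantor scheme inside $K$ with uniform branching, children at each stage produced by the hyperplane-diffuse property avoiding the hyperplane through the previously chosen points, uniform mass, Ahlfors regularity by a counting argument) is essentially the paper's construction, and that part is fine. The gap is in the absolute-decay verification. You assert that ``at every level and in every ball at least one child escapes every $\beta$-neighbourhood of every hyperplane,'' and you claim this ``is exactly the repeated use of \eqref{eq:hyperplane diffuse}.'' It is not: each application of \eqref{eq:hyperplane diffuse} lets you dodge one \emph{prescribed} hyperplane (the one through the previously chosen points), whereas the decay estimate requires a statement uniform over \emph{all} hyperplanes $H$, including ones having nothing to do with the hyperplanes used in the construction. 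A priori a thin neighborhood $B(H,r')$ of some oblique $H$ could meet every child ball even though each child was chosen far from its own construction hyperplane. The paper closes exactly this hole with Lemma~\ref{lem:lemma5.6} (BFKRW Lemma 5.6, proved by compactness): if a hyperplane $H$ passes within $\beta'\rho$ of $d$ well-separated children $\bx_1,\dots,\bx_d$ that are in general position, then $B(\bx,\rho)\cap B(H,\beta'\rho)$ is trapped inside $B\left(H(\bx_1,\dots,\bx_d),\beta_0\rho\right)$; since the $(d+1)$st child was chosen outside that fattened neighborhood (this is \eqref{eq:measureInduction}), it escapes $B(H,r')$, so at most $d$ of the $d+1$ children are hit at every level, which yields the $(d/(d+1))^m$ decay. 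This lemma, together with the general-position property of the first $d$ children (which your construction does produce but you never invoke), is the missing ingredient.

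The substitute you propose does not fill the gap. Your ``covering lemma'' -- that $B(H,r')$ intersected with a ball of radius $\rho^j r_0$ is covered by boundedly many $\beta$-neighbourhoods of hyperplanes at that scale -- is trivially true with a single hyperplane, namely $H$ itself, and is then useless because the children were not chosen to avoid $B(H,\beta\rho^j r_0)$ for an arbitrary $H$. If what you have in mind is the stronger FSU4-type statement (the paper's Lemma~\ref{lem:5.6}): a finite family $\calh_B$ of hyperplanes, depending only on the ball and on $\beta$, such that every hyperplane's thin neighborhood is contained in the $\beta$-fattened neighborhood of some member of $\calh_B$ -- then that could indeed support an alternative construction, but only if the children are chosen to dodge the members of $\calh_B$ (one child per representative hyperplane, plus an argument for disjointness of the children), which is not the construction you described. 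As written, the children dodge the hyperplanes through their predecessors, so the connection to an arbitrary $H$ must go through the general-position/compactness lemma, and that step is absent. A minor further point: the natural branching number is $N=d+1$ (the last point avoiding the hyperplane spanned by the first $d$), not ``$N\le d$''; the $(d+1)$st child is precisely what the counting argument needs.
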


To summarise above discussion, in order to fully justify our claim that Theorem~\ref{thm:HAW} follows from Theorem~\ref{thm:intersection}, it remains to give formal proofs to
Propositions~\ref{prop:equiv} and \ref{measure}. To begin with, we deal with the former, and start by stating two
auxiliary statements that will be used in the proof of Proposition~\ref{prop:equiv}.

\begin{lem}\label{lem:5.6}
For any $\beta>0$ there exists $0<\beta'<\beta$ and $N$ such that, for every ball $B=B(\bx,\rho)\subq\bbr^d$ there is a collection of at most $N$ hyperplanes $\calh_B$ such that for any hyperplane $H'$ there exists $H\in\calh_B$ for which
\[
B(\bx,\rho)\cap B\left(H',\beta'\rho\right)\subq B\left(H,\beta \rho\right).
\]
\end{lem}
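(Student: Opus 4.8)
The plan is to prove Lemma~\ref{lem:5.6} by a compactness argument on the space of hyperplanes through or near a fixed ball, reducing to a local finiteness statement after suitable normalisation. First I would observe that the statement is translation and dilation covariant in the following sense: if the conclusion holds for the unit ball $B(\mathbf 0,1)$ with constants $\beta'$ and $N$, then it holds for every ball $B(\bx,\rho)$ with the \emph{same} constants, by applying the affine map $\by\mapsto\rho\by+\bx$ (which sends hyperplanes to hyperplanes and scales the relevant neighbourhoods by $\rho$). Hence it suffices to produce, for a given $\beta>0$, a finite collection $\calh=\calh_{B(\mathbf 0,1)}$ of hyperplanes and a constant $0<\beta'<\beta$ such that every hyperplane $H'$ satisfies $B(\mathbf 0,1)\cap B(H',\beta'\rho)\subq B(H,\beta\rho)$ for some $H\in\calh$ — here I am abbreviating by keeping $\rho$ around but one may just set $\rho=1$.

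Next I would parametrise the hyperplanes that actually matter. Only those hyperplanes $H'$ meeting $B(\mathbf 0,\beta')$ (say) contribute nontrivially: if $\dist(\mathbf 0,H')>\beta'$ then $B(H',\beta')$ is disjoint from $B(\mathbf 0,\beta')$, and one can absorb such $H'$ into a fixed ``far'' hyperplane or handle them trivially. So I restrict attention to hyperplanes $H'=\{\by:\langle \bv,\by\rangle=t\}$ with $\|\bv\|=1$ and $|t|\le 1$, i.e. to the compact parameter space $\Sigma=\{(\bv,t):\bv\in S^{d-1},\ |t|\le 1\}$ (note $(\bv,t)$ and $(-\bv,-t)$ give the same hyperplane, but that is harmless). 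The key geometric estimate is a continuity/uniformity bound: there is a constant $c=c(d)>0$ such that if two unit-normal hyperplanes $H'=(\bv',t')$ and $H=(\bv,t)$ satisfy $\|\bv-\bv'\|+|t-t'|\le \eta$, then $B(\mathbf 0,1)\cap B(H',c\beta)\subq B(H,\beta)$ whenever $\eta\le c\beta$. This is elementary: for $\by\in B(\mathbf 0,1)$ one has $|\langle\bv,\by\rangle-t|\le |\langle\bv',\by\rangle-t'| + \|\bv-\bv'\|\,\|\by\| + |t-t'|\le c\beta + 2\eta\le \beta$ after choosing $c$ small.

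Then I would finish by compactness: cover $\Sigma$ by finitely many balls of radius $\eta:=c\beta$ in the $(\bv,t)$ metric, say $N=N(d,\beta)$ of them, and let $\calh$ consist of the hyperplanes corresponding to their centres, together with one fixed ``very far'' hyperplane to mop up the case $\dist(\mathbf 0,H')>\beta'$. Set $\beta'=c\beta$ (shrinking if necessary so that $\beta'<\beta$). Given any hyperplane $H'$: if it is far from the origin, the far hyperplane in $\calh$ works; otherwise its parameter lies in one of the $\eta$-balls, whose centre $H\in\calh$ then satisfies the required inclusion by the key estimate. Rescaling back gives the statement for an arbitrary ball $B(\bx,\rho)$. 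The main obstacle — though it is fairly routine — is getting the uniformity in the geometric estimate genuinely independent of which ball one is looking at; this is exactly what the translation/dilation normalisation in the first step is designed to handle, so once that reduction is in place the rest is a short compactness count.
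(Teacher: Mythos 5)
Your argument is correct in substance, and it takes a genuinely different route from the paper: the paper does not prove Lemma~\ref{lem:5.6} directly but cites it as a special case of Assumption C.6 of \cite{FSU4}, verified there for hyperplanes as part (2) of Observation C.7, with the explicit value $\beta'=\beta/3$. Your route --- normalise to the unit ball by affine covariance, parametrise hyperplanes by a unit normal and an offset, prove the elementary estimate $|\langle\bv,\by\rangle-t|\le|\langle\bv',\by\rangle-t'|+\|\bv-\bv'\|\,\|\by\|+|t-t'|$ on $B(\mathbf{0},1)$, and take a finite $c\beta$-net of the compact parameter space --- is a self-contained compactness argument; it produces some $N=N(d,\beta)$ and some $\beta'<\beta$ but, unlike the explicit computation in \cite{FSU4}, no clean constant such as $\beta/3$. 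One step of your write-up is misstated, though easily repaired: it is not true that hyperplanes $H'$ with $\dist(\mathbf{0},H')>\beta'$ can be ``absorbed into a fixed far hyperplane''. A hyperplane at distance, say, $1/2$ from the origin still cuts through $B(\mathbf{0},1)$, and $B(\mathbf{0},1)\cap B(H',\beta')$ is then a slab through the ball; since such slabs occur in all directions, no single fixed $H$ can cover them all, so these $H'$ must be (and, in your construction, are) handled by the net itself. The correct dichotomy is: if $\dist(\mathbf{0},H')>1+\beta'$ then $B(\mathbf{0},1)\cap B(H',\beta')=\varnothing$ and the required inclusion holds trivially for any $H\in\calh$; otherwise the offset satisfies $|t|\le 1+\beta'$, so you should take the compact parameter space to be $\left\{(\bv,t)\sep\bv\in S^{d-1},\ |t|\le 2\right\}$ (your $|t|\le1$ just misses the nearly tangent hyperplanes with $1<|t|\le1+\beta'$, whose intersections with the ball are nonempty caps). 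With that adjustment, and dropping the superfluous ``far'' hyperplane, the rest of your argument, including the rescaling back to an arbitrary ball $B(\bx,\rho)$ with constants independent of the ball, goes through as written.
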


\begin{proof}
The statement of this lemma is a specific case of Assumption C.6 in \cite{FSU4}, where $\beta'=\frac{\beta}{3}$. In the case of hyperplanes (Lemma~\ref{lem:5.6}) it is verified as part (2) of Observation C.7. in \cite{FSU4}.
\end{proof}

The following is a slightly simplified version of Lemma 4.3 in \cite{BFKRW}.

\begin{lem}\label{lem:BFKRW4.3}
Let $K\subq\bbr^d$ be hyperplane diffuse. Then there exist %sufficiently small constants
$0<\beta_0<\frac{1}{3}$ and $r_0>0$ such that for any $0<r\le r_0$, any $\bx\in K$ and any hyperplane $H$ there exists $\bx'\in K$ such that
\begin{equation}\label{eq:BFKRW4.3}
B\left(\bx',\beta_0 r\right)\subq B(\bx,r)\setminus B\left(H,\beta_0 r\right).
\end{equation}
\end{lem}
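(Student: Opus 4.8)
The goal is to prove Lemma~\ref{lem:BFKRW4.3}: from the hyperplane diffuse property of $K$ (which only guarantees that $K\cap(B(\bx,r)\setminus B(H,\beta r))\neq\varnothing$), we need to upgrade to the existence of a whole sub-ball $B(\bx',\beta_0 r)$ of $K$-centre sitting inside $B(\bx,r)\setminus B(H,\beta_0 r)$. The plan is to iterate the diffuseness property a bounded number of times to first push away from $H$, and then use the regularity of the configuration to fit a ball of comparable size.

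First I would fix the constant $\beta$ and radius $r_0$ from Definition~\ref{def:hyperplane diffuse} for the given hyperplane diffuse set $K$, shrinking $r_0$ if necessary. Given $\bx\in K$, $0<r\le r_0$ and a hyperplane $H$, apply the diffuse property to obtain $\bx_1\in K\cap B(\bx,r)$ with $\dist(\bx_1,H)>\beta r$. Now I want a ball around $\bx_1$ that avoids $H$: the ball $B(\bx_1,\tfrac{\beta}{2}r)$ is automatically disjoint from $B(H,\tfrac{\beta}{2}r)$, but it need not lie inside $B(\bx,r)$. To fix the containment I would instead work at a slightly smaller scale: choose $\bx_1\in K\cap B(\bx,(1-\tfrac{\beta}{2})r)$ with $\dist(\bx_1,H)>\beta(1-\tfrac{\beta}{2})r$ by applying diffuseness inside $B(\bx,(1-\tfrac{\beta}{2})r)$ (legal since $(1-\tfrac{\beta}{2})r\le r_0$). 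Then $B(\bx_1,\tfrac{\beta}{2}\cdot(1-\tfrac{\beta}{2})r)\subq B(\bx,r)$ and is disjoint from $B(H,\tfrac{\beta}{2}(1-\tfrac{\beta}{2})r-\text{(slack)})$; after adjusting constants one gets $B(\bx_1,\beta'r)\subq B(\bx,r)\setminus B(H,\beta'r)$ for an explicit $\beta'$ depending only on $\beta$.

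The remaining issue is that $B(\bx_1,\beta'r)$ has its centre $\bx_1\in K$, but the definition asks for exactly this shape, so in fact we are already done once the containment and separation are arranged with a single constant: set $\beta_0:=\min\{\beta',\tfrac13^-\}$ and $\bx':=\bx_1$, noting that $B(\bx',\beta_0 r)\subq B(\bx',\beta' r)\subq B(\bx,r)\setminus B(H,\beta' r)\subq B(\bx,r)\setminus B(H,\beta_0 r)$. One must double-check the inclusion $B(H,\beta_0 r)\subseteq B(H,\beta' r)$ goes the right way (it does, since $\beta_0\le\beta'$), and that $\beta_0<\tfrac13$ as required by the statement, which is arranged by taking the minimum.

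The only genuine subtlety — and the step I would be most careful about — is the simultaneous bookkeeping of the two inequalities: making the sub-ball small enough that it stays inside $B(\bx,r)$ while keeping it far enough from $H$, using a single parameter $\beta_0$. Since $\bx_1$ lies within distance $(1-\tfrac{\beta}{2})r$ of $\bx$ and at distance more than $\beta(1-\tfrac{\beta}{2})r$ from $H$, any radius $\rho\le\tfrac{\beta}{2}(1-\tfrac{\beta}{2})r$ works for both: $B(\bx_1,\rho)\subq B(\bx,(1-\tfrac{\beta}{2})r+\rho)\subq B(\bx,r)$ and $B(\bx_1,\rho)\cap B(H,\rho)=\varnothing$ because $\dist(\bx_1,H)>\beta(1-\tfrac{\beta}{2})r\ge 2\rho$. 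Taking $\beta_0=\min\{\tfrac{\beta}{2}(1-\tfrac{\beta}{2}),\tfrac{1}{4}\}$ and $\bx'=\bx_1$ finishes the proof; no iteration beyond a single application of Definition~\ref{def:hyperplane diffuse} is actually needed, the content being purely the quantitative rearrangement of constants.
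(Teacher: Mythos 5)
Your proof is correct. Note that the paper itself does not prove this lemma at all: it is stated as a slightly simplified form of Lemma 4.3 of \cite{BFKRW} and the proof is deferred to that reference, so a self-contained argument like yours is a legitimate (and equally elementary) alternative; your final paragraph is exactly the standard computation, namely a single application of Definition~\ref{def:hyperplane diffuse} at the shrunken radius $\left(1-\tfrac{\beta}{2}\right)r$ followed by triangle-inequality bookkeeping, and the choice $\beta_0=\min\left\{\tfrac{\beta}{2}\left(1-\tfrac{\beta}{2}\right),\tfrac{1}{4}\right\}$, $\bx'=\bx_1$ does verify \eqref{eq:BFKRW4.3}. Two small points to tidy up: first, you should record that one may take $\beta<1$ in Definition~\ref{def:hyperplane diffuse} --- either because the property is preserved when $\beta$ is decreased, or because $\beta\ge1$ is impossible for nonempty $K$ (take $H$ through $\bx\in K$, so that $B(\bx,r)\setminus B(H,\beta r)=\varnothing$) --- which is what guarantees $1-\tfrac{\beta}{2}>0$ and hence $\beta_0>0$; second, the intermediate paragraph with the unspecified ``slack'' and the notation $\min\{\beta',\tfrac{1}{3}^-\}$ is superseded by, and should be replaced with, the precise estimates of your last paragraph, which alone constitute the proof.
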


\begin{proof}[Proof of Proposition~\ref{prop:equiv}]
Assume that $S$ is hyperplane absolute winning and $K$ is hyperplane diffuse. Let $\beta_0$ and $r_0$ be the same as in Lemma~\ref{lem:BFKRW4.3} and 
suppose that Alice and Bob play the restricted hyperplane absolute game according to Definition~\ref{def:hyperplaneWinning}. Suppose 
that on the first move Bob chooses $\beta=\beta_0$ and a ball $B_0$ of radius $r_0$ centred in $K$. These are valid assumptions 
since $\beta_0\in(0,1)$, $K$ is non-empty and $B_0$ can be arbitrary. Let $n\ge0$ and suppose that $B_0,\dots,B_n$ are the balls 
arising from the restricted hyperplane absolute game that Alice and Bob play with Alice using her winning strategy. Suppose that 
all these balls are centred in $K$. Note that, by the choice of $B_0$, this is true for $n=0$. Let $A_{n+1}$ be the neighborhood of any 
hyperplane in $\R^d$ of radius $\beta^{n+1}r_0$ that Alice chooses according to Definition~\ref{def:hyperplaneWinning}. Then, by Lemma~\ref{lem:BFKRW4.3},
Bob can choose a ball $B_{n+1}$ of radius $\beta^{n+1}r_0$ which is contained in $B_n\setminus A_{n+1}$ and centered in $K$. Indeed, $B_{n+1}$ can be defined to be $B\left(\bx',\beta_0 r\right)$ arising from Lemma~\ref{lem:BFKRW4.3} with $B(\bx,r)=B_n$ and $B\left(H,\beta_0 r\right)=A_{n+1}$. Thus, for any $n$ Bob has a legal move, which means that Alice cannot win by default, and the sequence $\left(B_n\right)_{n\ge0}$ can be made infinite. Furthermore, as we have shown above Bob can play so that the centres of $B_n$ are all in $K$ and so the unique point in $\bigcap_{n\geq0}B_n$ lies in $K$.
At the same time Alice can play using her winning strategy so that the unique point in $\bigcap_{n\geq0}B_n$ also lies in $S$. Therefore, $S\cap K\neq\varnothing$ and this completes the proof of the first part of Proposition~\ref{prop:equiv}.

For the converse, assume that $S$ is a Borel set which is not HAW. Then, by Borel determinacy theorem for the absolute game appearing in \cite[Theorem 1.6]{FLS}, Bob has a winning strategy, which will be fixed for the rest of the proof.
Let $\beta$ and $B_0$ be chosen on the first move of Bob according to his winning strategy. Define $\calb_0=\{B_0\}$ and continue by induction to construct collections of closed balls $\calb_n$ as follows. Given $\calb_n$, for every $B\in \calb_n$ let $\calh_B$ be the collection of hyperplanes arising from Lemma~\ref{lem:5.6}. Define
%$\calb_{n+1}=\bigcup_{B\in\calb_n}\calb_{n+1}\left(B\right)$, where
$\calb_{n+1}(B)$ to be the collection of all of Bob's responses according to the winning strategy while considering {the} hyperplanes in $\calh_B$ as possible moves of Alice. {Note that $\calb_{n+1}(B)$ is always nonempty.} Define
\begin{equation}\label{K}
K=\bigcap_{n\geq0}\bigcup_{B\in\calb_n}B\,.
\end{equation}
%{By the compactness of $\{1,\ldots,N\}^\bbn$,} where $N$ is as in Lemma~\ref{lem:5.6},
{By Lemma~\ref{lem:5.6},} every $\bx\in K$ is an outcome of the hyperplane absolute game played according to Bob's winning strategy. {Therefore,} {$\bx\not\in S$}. {Since $\bx$ is an arbitrary point of $K$, we have} that $K\cap S=\varnothing$.

It is left to verify that $K$ is hyperplane diffuse. Indeed, we will show that it is $\frac{\beta'\beta}{2}$ hyperplane diffuse for $\beta'$ as in Lemma~\ref{lem:5.6}. Assume $\bx\in K$, $0<r\leq r_0$ and $H'\subq\bbr^{d}$ is a hyperplane, where $r_0$ is the radius of $B_0$. Let $n$ be the unique {positive} integer such that
\begin{equation}\label{eq:scale}
2\beta^{n}r_0\leq r <2\beta^{n-1}r_0\,,
\end{equation}
which clearly exists since $0<\beta<1$.
Since $\bx\in K$, by \eqref{K}, there exists a ball $B=B\left(\bx_0,\beta^{n}r_0\right)\in\calb_n$ such that $\bx\in B$.
The left hand side of \eqref{eq:scale} implies that $B\subq B(\bx,r)$. By Lemma~\ref{lem:5.6} applied with $\rho=\beta^n r_0$, there exists $H\in \calh_B$ such that
\[
B\cap B\left(H',\beta'\beta^{n}r_0\right)\subq B\left(H,\beta^{n+1} r_0\right).
\]
The right hand side of \eqref{eq:scale} implies that $\frac{\beta\beta'}{2}r<\beta'\beta^{n}r_0$ and hence
\[
B\cap B\left(H',\frac{\beta\beta'}{2}r\right)\subq B\left(H,\beta^{n+1} r_0\right).
\]
By the definition of $\calb_{n+1}(B)$, there exists a ball $B'\in\calb_{n+1}(B)$ such that $B'\cap B\left(H,\beta^{n+1} r_0\right)=\varnothing$. Since the collections $\calb_{n+1}(B)$ are always nonempty, by \eqref{K}, we have that $K\cap B'\neq\varnothing$.
Since $\varnothing \neq K\cap B'\subq K\cap B\subq K \cap B(\bx_0,r)$, we have $K \cap B(\bx_0,r)\not\subq B\left(H',\frac{\beta\beta'}{2}r\right)$.  Hence, $K \cap \left(B(\bx_0,r)\setminus B\left(H',\frac{\beta\beta'}{2}r\right)\right)\neq\varnothing$. This verifies Definition~\ref{def:hyperplane diffuse} for the set $K$ and thus completes the proof.
\end{proof}

\subsection{Proof of Proposition~\ref{measure}}

The proof of Proposition~\ref{measure} relies on a standard construction of Ahlfors regular measures in $\bbr^d$ via decreasing collections of disjoint balls. For this construction we follow \cite[Section 7.2]{KleinbockWeiss1}. Assume that $0<\beta<1$, $r_0 > 0$, and that $N>1$ is some fixed integer. Assume that $B_0$ is a closed ball {in $\R^d$} and that $\left(\calb_n\right)_{n\geq0}$ is a sequence of collections of closed balls such that $\calb_0 = \left\{B_0\right\}$, any $B\in\calb_n$ is a ball of radius $\beta^nr_0$, and
\begin{equation}\label{vb999}
{\calb_{n+1}=\bigcup_{B\in\calb_n}\calb_{n+1}\left(B\right),}
\end{equation}
where for every $B\in\calb_n$ the collection
\[
\calb_{n+1}\left(B\right) = \left\{B'\in\calb_{n+1}\sep B'\subq B\right\}
\]
contains exactly $N$ disjoint balls for any $n\geq0$. Define
\begin{equation}\label{KK}
K = \bigcap_{n\geq0}\bigcup_{B\in\calb_n}B\,.
\end{equation}
Also define the sequence of probability measures
\[
\mu_n = \frac{1}{\#\calb_n}\sum_{B\in\calb_n}\lambda|_{B}\qquad(n\geq0)
\]
where $\lambda$ is the Lebesgue measure on $\bbr^d$ and $\lambda|_B$ is the normalised restriction of $\lambda$ to $B$ which is defined by the formula $\lambda|_B(A) = \lambda(A\cap B)/\lambda(B)$ for any Lebesgue measurable set $A$.
{By \eqref{vb999}, we have that $\supp\mu_n\subp\supp\mu_{n+1}$ for any $n\ge0$.} Let $\mu$ be the weak limit of $\mu_n$ and let
\begin{equation}\label{eq:exponent}
\alpha = - \frac{\log N}{\log \beta}\,.
\end{equation}
{Note that for every $n\ge0$ and every $B\in\calb_n$ we have that
$$
\mu(B)=\mu_n(B)=\frac{1}{\#\calb_n}=N^{-n}\,.
$$}

\begin{prop}\label{prop:treeLike}
Let $K\subq\bbr^d$, $\mu$ and $\alpha$ be defined as above. Then $\support \mu = K$ and $\mu$ is $\alpha$-Ahlfors regular.
\end{prop}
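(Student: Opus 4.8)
The plan is to verify directly from the explicit construction that $\support\mu = K$ and that the Ahlfors regularity inequalities \eqref{eq:AhlforsRegular} hold with exponent $\alpha$ as in \eqref{eq:exponent}. First I would establish $\support\mu = K$: since $\supp\mu_n$ increases with $n$ and each $\supp\mu_n = \bigcup_{B\in\calb_n}B$, the weak limit $\mu$ is supported on the closure of $\bigcup_n\supp\mu_n$; but because the sets $\bigcup_{B\in\calb_n}B$ are nested \emph{decreasing} as $n\to\infty$ (each ball of $\calb_{n+1}$ sits inside one of $\calb_n$) rather than increasing, one must argue more carefully. The correct statement is that $K=\bigcap_n\bigcup_{B\in\calb_n}B$ is a nonempty compact set, and one shows $\mu(K)=1$ by noting $\mu\big(\bigcup_{B\in\calb_n}B\big)=\mu_m\big(\bigcup_{B\in\calb_n}B\big)=1$ for all $m\ge n$ (since for $m\ge n$ every ball of $\calb_m$ lies in some ball of $\calb_n$), hence $\mu\big(\bigcup_{B\in\calb_n}B\big)=1$ by the portmanteau theorem applied to this closed set, and then $\mu(K)=\lim_n\mu\big(\bigcup_{B\in\calb_n}B\big)=1$. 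Combined with the fact recorded just before the proposition that $\mu(B)=N^{-n}>0$ for every $B\in\calb_n$, every point of $K$ lies in arbitrarily small balls of positive $\mu$-measure, giving $K\subq\support\mu$; the reverse inclusion $\support\mu\subq K$ follows since $\mu$ is concentrated on the closed set $K$.

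Next I would prove $\alpha$-Ahlfors regularity. Fix $\bx\in K$ and $0<r\le r_0$, and let $n\ge 0$ be the integer with $\beta^{n+1}r_0\le r<\beta^n r_0$. For the \textbf{upper bound}: since $\bx\in K$, there is a ball $B\in\calb_n$ containing $\bx$, of radius $\beta^n r_0$; the ball $B(\bx,r)$ meets only those balls $B'\in\calb_n$ that lie within distance $r<\beta^n r_0$ of $\bx$, but the balls in $\calb_n$ need not be boundedly many near a point unless one controls their separation. Here the key geometric input is a standard volume/packing argument: since the $N$ sub-balls inside each parent ball of $\calb_n$ are disjoint and all of radius $\beta^{n+1}r_0$, and more importantly all balls of $\calb_n$ have the \emph{same} radius $\beta^n r_0$ and (being subsets of pairwise disjoint parents down the tree) are pairwise disjoint, the number of them meeting $B(\bx,r)$ is at most a constant $C_d$ depending only on $d$ (by comparing Lebesgue volumes: all such balls lie inside $B(\bx, r+2\beta^n r_0)\subq B(\bx, 3\beta^{-1}r)$, so their count is at most $(3\beta^{-1}r)^d/(\beta^n r_0)^d\le (3\beta^{-2})^d$). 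Hence $\mu(B(\bx,r))\le C_d\cdot N^{-n}= C_d N\cdot N^{-(n+1)}\le C_d N (\beta^{-(n+1)}r_0/r_0)^{?}$— more cleanly, using $\beta^{n+1}r_0\le r$ one gets $N^{-n}= (\beta^n)^{\alpha}r_0^{-\alpha}r_0^\alpha\cdot\text{(const)}$ via $N^{-n}=\beta^{n\alpha}=(\beta^{n+1}r_0)^\alpha(\beta r_0)^{-\alpha}\le (\beta r_0)^{-\alpha}r^\alpha$, so $\mu(B(\bx,r))\le C_d(\beta r_0)^{-\alpha}r^\alpha$. For the \textbf{lower bound}: the ball $B\in\calb_n$ with $\bx\in B$ has radius $\beta^n r_0>r$, hence $B\supq B(\bx, r-\beta^n r_0)$? — that fails since $r<\beta^n r_0$. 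Instead, drop one more level: since $\bx\in K$ there is $B''\in\calb_{n+1}$ with $\bx\in B''$, of radius $\beta^{n+1}r_0\le r$, so $B''\subq B(\bx,2\beta^{n+1}r_0)$, and one needs $B''\subq B(\bx,r)$, which holds when $2\beta^{n+1}r_0\le r$; in general, choosing the index $n$ so that $2\beta^{n+1}r_0\le r<2\beta^n r_0$ instead fixes both estimates simultaneously. With that normalisation, $\mu(B(\bx,r))\ge\mu(B'')=N^{-(n+1)}=\beta^{(n+1)\alpha}r_0^\alpha r_0^{-\alpha}\ge (2r_0)^{-\alpha}\beta^\alpha r^\alpha$, using $\beta^{n+1}r_0> r/(2\beta^{-1})=\beta r/2$. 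So both inequalities of \eqref{eq:AhlforsRegular} hold with $A=\max\{C_d(\beta r_0)^{-\alpha},(2r_0/\beta)^{\alpha}\beta^{-2\alpha}\}$ (constants only needing to be chosen consistently) and $\rho_0=r_0$ or $\rho_0=2r_0$ depending on the normalisation chosen.

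I expect the main obstacle to be the \emph{upper bound packing estimate}: controlling how many balls of a given level $\calb_n$ can meet a ball of comparable radius. This is where one genuinely uses that the balls of $\calb_n$ are pairwise disjoint with a common radius — a volume comparison in $\bbr^d$ then bounds the count by a dimensional constant $C_d$. One should be slightly careful that the construction as stated only guarantees the $N$ children of a \emph{single} parent are disjoint; disjointness across different parents at level $n$ follows inductively since parents at level $n-1$ are already disjoint and children stay inside their parent. The other point requiring care is the bookkeeping with the two thresholds for $r$ versus $\beta^n r_0$ in the upper and lower bounds; this is handled uniformly by choosing $n$ via $2\beta^{n+1}r_0\le r< 2\beta^n r_0$, which makes the relevant balls fit on both sides, at the cost of enlarging the constant $A$ and taking $\rho_0=2r_0$. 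Everything else is routine substitution using $N=\beta^{-\alpha}$, i.e. $N^{-n}=\beta^{n\alpha}$.
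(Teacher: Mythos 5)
Your proposal is correct and follows essentially the same route as the paper's proof: after settling on the normalisation $2\beta^{n+1}r_0\le r<2\beta^n r_0$, the lower bound comes from a level-$(n+1)$ ball of measure $N^{-(n+1)}$ contained in $B(\bx,r)$ and the upper bound from a bounded number (depending only on $\beta$ and $d$) of level-$n$ balls meeting $B(\bx,r)$, exactly as in the paper. The only differences are that you spell out the packing/disjointness argument and the identification $\support\mu=K$ (via the portmanteau theorem), which the paper asserts or leaves implicit; these additions are sound.
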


Proposition~\ref{prop:treeLike} is proved in \cite[Proposition 7.1]{KleinbockWeiss1} for $\bbr^d$ with the supremum norm. For completeness we repeat their proof with the Euclidean norm.

\begin{proof}
Assume $\bx\in K$ and $0 < r \leq 2r_0$. Let $n$ be the unique integer for which
\begin{equation}\label{eq:KleinbockWeiss1}
2\beta^{n+1}r_0 < r \leq 2\beta^nr_0\,.
\end{equation}
Since $\bx\in K$, {by \eqref{KK} and the disjointness of the balls in $\calb_{n+1}$,} there exists a unique ball $B\in\calb_{n+1}$ such that $\bx\in B$. The left hand side of \eqref{eq:KleinbockWeiss1} implies that $B\subq B(\bx,r)$. So, by \eqref{eq:exponent} and the right hand side of \eqref{eq:KleinbockWeiss1} this implies that
\[
\mu(B(\bx,r))\geq \mu(B) = \frac{1}{N^{n+1}} = \beta^{\alpha(n+1)} \geq \left(\frac{\beta}{2r_0}\right)^\alpha r^\alpha\,.
\]
On the other hand, by the right hand side of \eqref{eq:KleinbockWeiss1}, there exists a constant $M\geq1$ depending only on $\beta$ and $d$ such that
\[
\#\left\{B\in\calb_n\sep B\cap B(\bx,r)\neq\varnothing\right\} \leq M\,.
\]
Therefore, by \eqref{eq:exponent} and the left hand side of \eqref{eq:KleinbockWeiss1} this implies that
\[
\mu(B(\bx,r))\leq \frac{M}{N^n} = M\beta^{\alpha n} < M\left(\frac{1}{2r_0\beta}\right)^\alpha r^\alpha\,.
\]
So \eqref{eq:AhlforsRegular} is verified with $A=\max\left\{\left(\frac{2r_0}{\beta}\right)^\alpha,M\left(\frac{1}{2r_0\beta}\right)^\alpha\right\}$.

\end{proof}

The proof of Proposition~\ref{measure} is based on the construction described above, for a particular choice of balls which stay far from appropriate neighborhoods of hyperplanes in each level. The argument used for the proof of \cite[Proposition 5.5]{BFKRW} provides such a choice. It is based on the following lemma.

\begin{defn}\label{def:generalPosition}
Say that $d$ points in $\bbr^d$ are in \emph{general position} if they lie on a unique hyperplane. If $\bx_1,\ldots,\bx_d\in\bbr^d$ are in general position denote this hyperplane by $H\left(\bx_1,\ldots,\bx_d\right)$.
\end{defn}

\begin{lem}[{See {\cite[Lemma 5.6]{BFKRW}}}]\label{lem:lemma5.6}
Given $\beta_0 > 0$, there exists a positive parameter $\beta'\leq\beta_0$ such that for every $\bx\in\bbr^d$, $\rho>0$, and $\bx_1,\ldots,\bx_d\in B(\bx,\rho)$ in general position such that the balls $B\left(\bx_i,\beta_0\rho\right)$ are contained in $B(\bx,\rho)$ for every $1\leq i \leq d$ and are pairwise disjoint, if a hyperplane $H$ intersects $B\left(\bx_i,\beta'\rho\right)$ for every $1\leq i\leq d$ then
\[
B(\bx,\rho)\cap B\left(H,\beta'\rho\right)\subq B\left(H\left(\bx_1,\ldots,\bx_d\right),\beta_0\rho\right).
\]
\end{lem}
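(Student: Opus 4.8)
The plan is to prove Lemma~\ref{lem:lemma5.6} directly from elementary linear algebra, exploiting the fact that the $d$ points $\bx_1,\ldots,\bx_d$ are in general position and the hyperplane $H$ passes close to each of them. Write $H=\{\by\in\bbr^d: \langle \by,\bv\rangle = c\}$ for a unit normal vector $\bv$ and a constant $c$, and similarly write $H_0:=H(\bx_1,\ldots,\bx_d)=\{\by:\langle\by,\bv_0\rangle=c_0\}$ for a unit normal $\bv_0$. The first step is a stability estimate: if $H$ intersects $B(\bx_i,\beta'\rho)$ for every $i$, then $|\langle\bx_i,\bv\rangle - c|\le\beta'\rho$ for all $i$, so the affine functional $\ell(\by):=\langle\by,\bv\rangle - c$ is small (size $O(\beta'\rho)$) at all $d$ points $\bx_i$, which span the hyperplane $H_0$. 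I would show this forces $\bv$ to be close to $\pm\bv_0$ and, after projecting $c$ appropriately, that $H$ lies in $B(H_0, C\beta'\rho)$ on the ball $B(\bx,\rho)$, where $C$ depends only on how "nondegenerate" the configuration $\bx_1,\ldots,\bx_d$ is.

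The key quantitative input is that the disjointness and containment hypotheses on the balls $B(\bx_i,\beta_0\rho)$ provide a uniform lower bound on this nondegeneracy. Concretely, after translating so that $\bx$ is the origin and rescaling by $\rho$, we may assume $\bx_1,\ldots,\bx_d$ lie in the unit ball $B(0,1)$, the balls $B(\bx_i,\beta_0)$ are pairwise disjoint and contained in $B(0,1)$. Pairwise disjointness gives $|\bx_i-\bx_j|\ge 2\beta_0$ for $i\ne j$, which is a uniform separation; this translates into a uniform lower bound $\ge c(\beta_0,d)>0$ on the $(d-1)$-dimensional volume of the simplex spanned by $\bx_1,\ldots,\bx_d$ inside $H_0$, equivalently on the smallest singular value of the matrix whose columns are the differences $\bx_i-\bx_1$ (viewed inside the $(d-1)$-dimensional space parallel to $H_0$). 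This is where a compactness/continuity argument is convenient: the set of configurations $(\bx_1,\ldots,\bx_d)$ satisfying the hypotheses, for fixed $\beta_0$, is compact, the nondegeneracy quantity is continuous and strictly positive on it, hence bounded below by some $c(\beta_0,d)>0$. Then one takes $\beta':=\min\{\beta_0,\, c(\beta_0,d)/C_d\}$ for an appropriate dimensional constant $C_d$ coming from the stability estimate of the first step, and the desired inclusion follows.

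In more detail for the stability step: since $\bx_2-\bx_1,\ldots,\bx_d-\bx_1$ span the direction space of $H_0$ and $|\ell(\bx_i)-\ell(\bx_1)|\le 2\beta'\rho$, the restriction of the linear functional $\langle\cdot,\bv\rangle$ to this $(d-1)$-dimensional space has norm $O(\beta'\rho/(\rho\cdot c(\beta_0,d)))=O(\beta'/c(\beta_0,d))$; hence $\bv$ decomposes as $\bv = s\bv_0 + \bv^\perp$ with $|s|\le 1$ and $|\bv^\perp|=O(\beta'/c(\beta_0,d))$, and since $|\bv|=1$ we get $|s|$ close to $1$. For any $\by\in B(\bx,\rho)$ with $\langle\by,\bv\rangle=c$, one then estimates $\dist(\by,H_0)=|\langle\by,\bv_0\rangle-c_0|$ by writing $\langle\by,\bv_0\rangle - c_0$ in terms of $\langle\by,\bv\rangle - c$ (which vanishes), the error terms $|\bv^\perp|\cdot|\by - \bx_1|=O(\beta'\rho/c(\beta_0,d))$, and $|c/s - c|$ controlled similarly using that $\ell(\bx_1)=O(\beta'\rho)$; all contributions are $O(\beta'\rho/c(\beta_0,d))$, so they are $\le\beta_0\rho$ once $\beta'$ is small enough depending only on $\beta_0$ and $d$.

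\textbf{Main obstacle.} The routine-but-delicate part is the bookkeeping in the stability estimate — tracking how the sign ambiguity in $\bv$ versus $\bv_0$, the normalisation of the affine constants, and the "on $B(\bx,\rho)$" restriction interact — and, above all, pinning down the uniform lower bound $c(\beta_0,d)$ on the nondegeneracy of admissible configurations. The cleanest route is the compactness argument sketched above, which avoids an explicit formula; the only subtlety there is to phrase the nondegeneracy quantity (e.g. the least singular value of the difference matrix, or equivalently the distance from $\bx_1$ to the affine span of a proper subcollection) as a genuinely continuous, strictly positive function on the compact parameter space, for which general position is exactly what is needed.
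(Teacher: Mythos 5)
The key quantitative step in your argument fails. Pairwise disjointness of the balls $B(\bx_i,\beta_0\rho)$ only gives the separation $\|\bx_i-\bx_j\|\ge 2\beta_0\rho$, and for $d\ge 3$ this does \emph{not} bound the $(d-1)$-volume of the simplex (equivalently, the least singular value of your difference matrix) away from zero: take $d=3$, $\bx=0$, $\rho=1$, $\bx_1=(-1/2,0,0)$, $\bx_2=(0,\delta,0)$, $\bx_3=(1/2,0,0)$ with $\delta>0$ tiny. All the stated hypotheses hold for any $\beta_0\le 1/4$, yet the triangle has area $\delta/2$. Your compactness argument does not repair this: general position is an open condition, so the admissible configuration space is not compact, and your nondegeneracy functional extends continuously to its closure, where it vanishes on degenerate configurations; its infimum over admissible configurations is $0$, not a positive $c(\beta_0,d)$. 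Without such a lower bound the stability estimate gives nothing: in the example, with $\delta<\beta'$ the hyperplane $H=\{y=0\}$ meets every $B(\bx_i,\beta')$, while $H(\bx_1,\bx_2,\bx_3)=\{z=0\}$, and $B(0,1)\cap B(H,\beta')$ contains points (e.g. $(0,0,0.9)$) at distance nearly $1$ from $H(\bx_1,\bx_2,\bx_3)$. So no $\beta'$ depending only on $\beta_0$ and $d$ can be produced along your lines from separation alone; some genuinely quantitative general-position input is indispensable, and this is the missing ingredient in the proposal.

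For comparison, the paper argues softly by contradiction: normalise $\bx=0$, $\rho=1$, suppose the conclusion fails for $\beta'=1/k$ with witnesses $(\bx_{1,k},\ldots,\bx_{d,k})$ and $H_k$, pass to convergent subsequences, note that the limit points lie on the limit hyperplane $H$, and then for large $j$ sandwich $B(0,1)\cap B\left(H_{k_j},\beta_0/3\right)\subq B(0,1)\cap B\left(H,2\beta_0/3\right)\subq B(0,1)\cap B\left(H\left(\bx_{1,k_j},\ldots,\bx_{d,k_j}\right),\beta_0\right)$, contradicting the choice of witnesses; no explicit constant is ever named. Note, however, that the last inclusion there also relies on $H\left(\bx_{1,k_j},\ldots,\bx_{d,k_j}\right)$ staying close to $H$, i.e. on nondegeneracy of the limiting configuration, which is exactly the subtlety you ran into (the example above is a sequence of admissible witnesses whose limit is collinear). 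What actually makes everything work is that in the lemma's only application, the proof of Proposition~\ref{measure}, the points are constructed so that each $\bx_{i+1}$ lies at distance at least $\beta_0$ times the scale from a hyperplane containing $\bx_1,\ldots,\bx_i$, hence from their affine span. That is precisely the uniform lower bound $c(\beta_0,d)$ your stability estimate needs; with that strengthened hypothesis your linear-algebra route does go through (and, unlike the compactness proof, would yield explicit constants), but as written the proposal's derivation of $c(\beta_0,d)$ from ball disjointness is a genuine gap.
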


Lemma~\ref{lem:lemma5.6} is stated in \cite{BFKRW} with the general position assumption implicit. We repeat the proof that appears in \cite{BFKRW} for completeness.

\begin{proof}
Without loss of generality assume that $\bx=0$ and $\rho=1$. By contradiction, assume that for every integer $k\geq1$ there are $\bx_{1,k},\ldots,\bx_{d,k}{\in B(0,1)}$ in general position and a hyperplane $H_k$ that intersects $B\left(\bx_{i,k},\frac{1}{k}\right)$ for each $1\leq i\leq d$ but
\begin{equation}\label{eq:contradictionHyperplane}
B(0,1)\cap B\left(H_k,\frac{1}{k}\right) \not\subq B\left(H(\bx_{1,k},\ldots,\bx_{d,k}),\beta_0\right).
\end{equation}
By the compactness of $B(0,1)$ there are subsequences $\left(\bx_{1,k_j},\ldots,\bx_{d,k_j}\right)$ and $H_{k_j}$ that converge, say to $\left(\bx_1,\ldots,\bx_d\right)$ and $H$ respectively. Then necessarily $\bx_1,\ldots,\bx_d\in H$ and, therefore, any $j$ large enough satisfies
\begin{align*}
B(0,1)\cap B\left(H_{k_j},\frac{\beta_0}{3}\right) & \subq B(0,1)\cap B\left(H,\frac{2\beta_0}{3}\right)\\
 & \subq B(0,1)\cap B\left(H\left(\bx_{1,k_j},\ldots,\bx_{d,k_j}\right),\beta_0\right).
\end{align*}
%Applying \eqref{eq:firstHyperplane} with $\eps=\frac{\beta_0}{4}$ and then \eqref{eq:secondHyperplane} with $\eps=\frac{\beta_0}{2}$, and
Choosing $j$ large enough so that $\frac{1}{k_j}\leq \frac{\beta_0}{3}$ we obtain a contradiction to \eqref{eq:contradictionHyperplane}.
\end{proof}

\begin{proof}[Proof of Proposition~\ref{measure}]
We follow the proof of \cite[Proposition 5.5]{BFKRW}. Assume $K$ is hyperplane {diffuse}. The goal is to construct an Ahlfors regular absolutely decaying measure supported on a subset of $K$. Let $\beta_0$ and $r_0$ be as in Lemma~\ref{lem:BFKRW4.3}. Let $\beta'$ be as in Lemma~\ref{lem:lemma5.6}, and let
\begin{equation}\label{eq:beta'}
\beta = \frac{\beta'}{2}\,.
\end{equation}
Let $\bx_0\in K$ be any point, and set $B_0=B(\bx_0,r_0)$ and $\calb_0=\left\{B_0\right\}$. Recursively construct the collections $\calb_{n+1}(B)$ for every integer $n\geq0$ and every $B\in\calb_n$ as follows. Construct by recursion a collection of $d+1$ points in $K\cap B$. Assume $\bx_1,\ldots,\bx_i\in K\cap B$ are already defined for some $0\leq i \leq d$, and let $H$ be any hyperplane that passes through $\bx_1,\ldots,\bx_i$. By \eqref{eq:BFKRW4.3} there exists a point $\bx_{i+1}$ such that
\begin{equation}\label{eq:measureInduction}
B\left(\bx_{i+1},\beta_0\beta^{n} r_0\right) \subq B \setminus B\left(H,\beta_0\beta^{n} r_0\right).
\end{equation}
Define $\calb_{n+1}(B)=\left\{B\left(\bx_1,\beta^{n+1}r_0\right),\ldots,B\left(\bx_{d+1},\beta^{n+1}r_0\right)\right\}$. Since $\beta<\beta_0$ this is a collection of $d+1$ disjoint balls contained in $B$. Let $\mu$ be as defined in the beginning of this section. Then $\support\mu\subq K$ since for every $n\geq0$ every $B\in\calb_n$ is a ball centered in $K$. Proposition~\ref{prop:treeLike} guarantees that $\mu$ is Ahlfors regular. It is left to verify that $\mu$ is absolutely decaying.

Assume $r\leq r_0$, $\bx\in\support\mu$ and $r' > 0$, and let $H$ be any hyperplane. Let $n\geq0$ be the unique integer satisfying
\begin{equation}
2\beta^{n+1} r_0 	 \leq r  	< 2\beta^{n} r_0\,. \label{eq:measureConstructionBall}
\end{equation}

Since $\bx\in\support\mu$ there are balls $B\subq B'$ with $B\in\calb_{n+1}$ and $B'\in\calb_n$ such that $\bx\in B$. The left hand side of \eqref{eq:measureConstructionBall} implies $B\subq B(\bx,r)$. On the other hand, equation \eqref{eq:measureInduction} implies that
\[
\dist\left(B',B''\right) \geq 2\left(\beta_0 - \beta\right)\beta^{n-1}r_0
\]
for any $B'\neq B''\in\calb_n$, therefore, since $\beta=\frac{\beta'}{2}\leq\frac{\beta_0}{2}$, the right hand side of \eqref{eq:measureConstructionBall} implies that $B(\bx,r)\cap B''=\varnothing$ for any $B'\neq B''\in\calb_n$. So, $B(\bx,r)\cap\support\mu\subq B'$.

It is enough to verify \eqref{eq:absolutelyDecaying} for every $r'$ small enough. Assume that $r' < \frac{1}{2}\beta r$ and let $m\geq1$ be the unique integer satisfying
\begin{equation}
\frac{1}{2}\beta^{m+1}r 	 \leq r' 	<  \frac{1}{2}\beta^{m}r\,. \label{eq:measureConstructionHyperplane}
\end{equation}
The right hand side of both \eqref{eq:measureConstructionBall} and \eqref{eq:measureConstructionHyperplane} imply that $r' < \beta^{m + n}r_0$. Therefore, for every $1\leq k\leq m$ and every $B''\in\calb_{n+k-1}$, the hyperplane neighborhood $B(H,r')$ intersects at most $d$ balls from $\calb_{n+k}(B'')$. Indeed, recall that
\[
\calb_{n+k}(B'')=\left\{B\left(\bx_1,\beta^{n+k}r_0\right),\ldots,B\left(\bx_{d+1},\beta^{n+k}r_0\right)\right\}.
\]
If $B(H,r')\cap B\left(\bx_i,\beta^{n+k}r_0\right)\neq\varnothing$ for every $1\leq i\leq d+1$ then \eqref{eq:beta'} implies that
\[%\begin{equation}\label{eq:hidden}
H\cap B\left(\bx_i,\beta'\beta^{n+k-1}\right)\neq\varnothing
\]%\end{equation}
for every $1\leq i\leq d+1$. By construction, the points $\bx_1,\ldots,\bx_{d}$ are in general position, so Lemma~\ref{lem:lemma5.6} gives
\[%\begin{equation}\label{eq:hidden2}
B''\cap B\left(H,\beta'\beta^{n+k-1}r_0\right) \subq B\left(H(\bx_1,\ldots,\bx_d),\beta_0\beta^{n+k-1}r_0\right).
\]%\end{equation}
In particular,
$\bx_{d+1}\in B\left(H(\bx_1,\ldots,\bx_d),\beta_0\beta^{n+k-1}r_0\right)$, which contradicts \eqref{eq:measureInduction}. The upshot is that $B(H,r')$ intersects at most $d^m$ balls in $\calb_{n+m}$, therefore,
\begin{align*}
\mu\left(B(\bx,r)\cap B\left(H,r'\right)\right) & \leq \left(\frac{d}{d+1}\right)^m\mu\left(B'\right) \\
& = \left(\frac{d}{d+1}\right)^m(d+1)\mu\left(B\right)  \leq \left(\frac{d}{d+1}\right)^m(d+1)\mu\left(B(\bx,r)\right).
\end{align*}
By the left hand side of \eqref{eq:measureConstructionHyperplane}, this verifies \eqref{eq:absolutelyDecaying} with $\delta = \frac{\log\frac{d}{d+1}}{\log \beta}$ and $D=\left(\frac{2}{\beta}\right)^\delta(d+1)$.

\end{proof}

\subsection{Cantor potential game}

In order to prove Theorem~\ref{thm:intersection}, we will use the Cantor potential game
introduced in \cite{BHNS}. The game and its corresponding winning sets are defined
as follows.

\begin{defn}\label{def:cantorWinning}
Let $X$ be a complete metric space and $\alpha\geq0$. The \emph{$\alpha$-Cantor potential game}
is played by two players, say Alice and Bob, who take turns making their moves. Bob starts by choosing a parameter $0<\beta<1$, which is fixed throughout the game, and a ball $B_0\subq \bbr^d$ of radius $r_0>0$. Subsequently for $n=0,1,2,\dots$, first, Alice chooses collections  $\mathcal{A}_{n+1,i}$
of at most $\beta^{-\alpha(i+1)}$ balls of radius $\beta^{n+1+i}r_0$ for every $i\geq0$.
Then, Bob chooses a ball $B_{n+1}$ of radius $\beta^{n+1}r_0$ which is contained in $B_n$ and
disjoint from $\bigcup_{0\leq \ell \leq n}\bigcup_{A\in\mathcal{A}_{n+1-\ell,\ell}}A$. If there is no
such ball the game stops and Alice wins by default. Otherwise, the \emph{outcome} of the game is the
unique point in $\bigcap_{n\geq0}B_n$.

A set $S\subq X$ is called \emph{$\alpha$-Cantor winning} if Alice has a strategy which ensures that she
either wins by default or the outcome lies in $S$. If $X$ is the support of an $\alpha$-Ahlfors regular measure then $S\subq X$ is called \emph{Cantor winning} if it is $\alpha'$-Cantor winning for some $0\leq \alpha'<\alpha$.
\end{defn}

It is proved in \cite{BHNS} that this definition of $\alpha$-Cantor winning sets agrees with the
original definition given in \cite{BadziahinHarrap}. Here the convention regarding $\alpha$ is
opposite to the one used in \cite{BadziahinHarrap}. For example, in our convention $0$-Cantor winning sets are absolute winning, see \cite{BHNS} or \cite{BFKRW} for the definition of absolute winning. This convention allows the definition of the
$\alpha$-Cantor potential game to be independent of the space $X$. This comes at the price that some properties of
Cantor winning subsets do depend on $X$. We will use the following fact about Cantor winning sets.

\begin{thm}[See~{\cite[Theorems 3.4, 4.1]{BHNS}}]\label{thm:cantor-winning-nonempty}
Let $X$ be the support of an $\alpha$-Ahlfors regular measure and let $S \subq X$ be Cantor winning. Then $S\neq \varnothing$.
\end{thm}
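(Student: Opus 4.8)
The goal is to show that a Cantor winning subset $S$ of the support $X$ of an $\alpha$-Ahlfors regular measure is nonempty. By definition, $S$ is $\alpha'$-Cantor winning for some $0\le\alpha'<\alpha$, so Alice has a strategy in the $\alpha'$-Cantor potential game that ensures that she wins by default or the outcome lies in $S$. The plan is to exhibit a concrete strategy for \emph{Bob} which prevents Alice from ever winning by default; once this is done, the game runs forever, produces an outcome point, and that point must lie in $S$, proving $S\ne\varnothing$. The heart of the matter is thus a counting argument: at each stage Bob must be able to find a legal ball, i.e.\ a ball of radius $\beta^{n+1}r_0$ inside $B_n$ avoiding the union of Alice's forbidden collections $\bigcup_{0\le\ell\le n}\bigcup_{A\in\mathcal A_{n+1-\ell,\ell}}A$.

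\textbf{Key steps.} First, I would let Bob choose on his opening move the parameter $\beta$ dictated by Alice's game together with a ball $B_0$ centred at a point of $X$ with $r_0\le\rho_0$, where $\rho_0$ is the Ahlfors constant from \eqref{eq:AhlforsRegular}; it is harmless to shrink $r_0$. Second, I would set up an inductive measure estimate: assuming $B_n=B(\bx_n,\beta^n r_0)$ with $\bx_n\in X$, I want to find $\bx_{n+1}\in X\cap B_n$ with $B(\bx_{n+1},\beta^{n+1}r_0)$ avoiding all of Alice's forbidden balls active at step $n+1$. The natural way is to cover $X\cap B_n$ (after slightly shrinking, say by a ball $B(\bx_n,\tfrac12\beta^n r_0)$ so that the candidate balls of radius $\beta^{n+1}r_0$ stay inside $B_n$) by disjoint balls of radius $\beta^{n+1}r_0$ centred in $X$; by Ahlfors regularity the number of such balls needed to cover, and a packing of them, is comparable to $(\beta^n/\beta^{n+1})^\alpha=\beta^{-\alpha}$ up to constants depending only on $A$ and $d$. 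Third, I would count the forbidden balls: at step $n+1$ the collection $\mathcal A_{n+1-\ell,\ell}$ consists of at most $\beta^{-\alpha'(\ell+1)}$ balls of radius $\beta^{n+1}r_0$, and summing over $0\le\ell\le n$ gives a total of at most $\sum_{\ell\ge0}\beta^{-\alpha'(\ell+1)}$, a geometric-type sum that is \emph{not} summable since $\beta^{-\alpha'}>1$; however, when measured at the scale $\beta^{n+1}r_0$, the balls in $\mathcal A_{n+1-\ell,\ell}$ have radius $\beta^{n+1-\ell+\ell}r_0=\beta^{n+1}r_0$, so they are exactly at Bob's working scale — so I must instead compare the \emph{measure} they occupy inside $B_n$: each such ball has $\mu$-measure $\lesssim(\beta^{n+1}r_0)^\alpha$, the whole batch has measure $\lesssim\beta^{-\alpha'(\ell+1)}(\beta^{n+1}r_0)^\alpha$, and since the relevant reservoir $\mu(B(\bx_n,\tfrac12\beta^nr_0))\gtrsim(\beta^n r_0)^\alpha$, the ratio of forbidden measure to available measure is $\lesssim\beta^{-\alpha'}\cdot\beta^{(\alpha-\alpha')\ell}\cdot\beta^\alpha$ summed over $\ell\ge0$, which \emph{is} summable because $\alpha-\alpha'>0$, and can be made $<1$ by taking $r_0$ small. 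Hence there is $\mu$-positive mass in $B_n$ avoiding all forbidden balls, in particular a point $\bx_{n+1}\in X\cap B_n$ at distance $>\beta^{n+1}r_0$ from every forbidden ball, and Bob plays $B_{n+1}=B(\bx_{n+1},\beta^{n+1}r_0)$.

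\textbf{Main obstacle.} The subtle point — and the reason the definition was set up with the weights $\beta^{-\alpha(i+1)}$ and the staggered scales $\beta^{n+1+i}r_0$ rather than all at one scale — is the bookkeeping of \emph{which} forbidden collections are active at step $n+1$ and at \emph{what} scale their balls sit; one has to check carefully that $\mathcal A_{n+1-\ell,\ell}$ was declared by Alice at her move number $n+1-\ell$ when the current ball had radius $\beta^{n-\ell}r_0$, its balls have radius $\beta^{(n-\ell)+1+\ell}r_0=\beta^{n+1}r_0$, and its cardinality bound $\beta^{-\alpha'(\ell+1)}$ is measured against that earlier ball — so the geometric gain comes from $\mu$ of the earlier ball being $(\beta^{n-\ell}r_0)^\alpha$, not $(\beta^n r_0)^\alpha$, which actually helps further. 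Once this indexing is pinned down, the summability $\sum_{\ell\ge0}\beta^{(\alpha-\alpha')(\ell+1)}<\infty$ together with choosing $r_0$ (equivalently the Ahlfors constant regime) small enough to beat the implied constant closes the induction, Bob never gets stuck, the nested balls $B_n$ shrink to a point $x_\infty\in\bigcap_n B_n\subseteq X$ which is the outcome, and by Alice's winning strategy $x_\infty\in S$. Therefore $S\ne\varnothing$. I expect the only real work is this careful indexing and the resulting geometric-series estimate; everything else (covering/packing in Ahlfors regular spaces, nested intersection giving a point) is standard.
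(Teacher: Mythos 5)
The statement you are proving is not proved in this paper at all: it is quoted from \cite[Theorems 3.4, 4.1]{BHNS}, so the only comparison to make is with the strategy you propose, and there your argument has a genuine gap at its central step. Your plan is that a greedy Bob, at each move, finds a legal ball by comparing the measure of the forbidden balls with the measure of $B_n$. But the comparison must be made against $\mu(B_n)\asymp(\beta^n r_0)^\alpha$, the ball Bob is currently confined to, and at stage $n+1$ the forbidden set is a union of up to $\sum_{\ell=0}^{n}\beta^{-\alpha'(\ell+1)}\asymp\beta^{-\alpha'(n+1)}$ balls, \emph{all} of radius $\beta^{n+1}r_0$. Their total measure is $\lesssim\beta^{(\alpha-\alpha')(n+1)}r_0^\alpha$, so the ratio to $\mu(B_n)$ is $\asymp\beta^{\alpha-\alpha'}\beta^{-\alpha'n}$, which tends to infinity with $n$ whenever $\alpha'>0$; equivalently, the number of forbidden balls at the current scale eventually dwarfs the packing number $\asymp\beta^{-\alpha}$ of scale-$\beta^{n+1}r_0$ balls inside $B_n$. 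Shrinking $r_0$ cannot help, since $r_0^\alpha$ cancels in the ratio. Your remark that the budget $\beta^{-\alpha'(\ell+1)}$ should be ``measured against the earlier ball'' $B_{n-\ell}$ (giving the summable factor $\beta^{(\alpha-\alpha')(\ell+1)}$) is exactly the right heuristic, but it does not by itself yield a positive-measure free region of $B_n$: Alice may concentrate her entire scale-$\beta^{n+1}r_0$ budget from move $n+1-\ell$ inside a single region of diameter $\beta^n r_0$, and a Bob who only avoids the currently forbidden balls can be steered into that region and then has no legal move.

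The missing idea is precisely the multi-scale bookkeeping that gives the Cantor \emph{potential} game its name and that \cite{BHNS} uses: Bob must choose each $B_{n+1}$ not only disjoint from the currently forbidden balls, but also so that the weighted count (potential) of all smaller-scale forbidden balls already declared that land inside $B_{n+1}$ — with weights of the form $\beta^{(\alpha-\alpha'')\cdot(\text{scale gap})}$ for a suitable intermediate exponent — does not exceed its average over the $\asymp\beta^{-\alpha}$ candidate sub-balls provided by Ahlfors regularity. An induction then keeps the potential uniformly bounded, which in particular guarantees that the forbidden balls at the current scale never exhaust $B_n$, and only then does the nested-ball/outcome argument you describe go through. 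Without this cross-stage control the single-step estimate fails, so as written the proof does not establish that Bob never gets stuck, which is the whole content of the theorem.
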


We finish this section by stating an auxiliary lemma about \emph{efficient covers} for Ahlfors regular measures, which will be used in Section~\ref{sec:proof}.

\begin{lem}\label{lem:covering}
Let $\mu$ be an Ahlfors regular measure on {a metric space} $X$, let $A,\alpha,r_0$ be
as in Definition~\ref{def:ahlfors} and let $S\subq X$ be any subset.  {Suppose that $0<r\leq r_0$ and $\mu\left(B\left(S,r\right)\right)<\infty$. Then} there exists a cover of $S\cap\support\mu$
with balls of radius $3r$ of cardinality at most
\begin{equation}\label{eq:covering}
%\frac{A3^\alpha\mu\left(B\left(S,3r\right)\right)}{r^\alpha}.
\frac{A\mu\left(B\left(S,r\right)\right)}{r^\alpha}\,.
\end{equation}
\end{lem}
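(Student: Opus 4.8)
The plan is to use a greedy (maximal packing) argument to build the cover, and then bound its cardinality by the measure of $B(S,r)$ using the lower Ahlfors regularity bound. First I would choose a maximal collection of points $\bx_1,\dots,\bx_M\in S\cap\support\mu$ that are pairwise $2r$-separated, i.e.\ $\dist(\bx_j,\bx_k)>2r$ for $j\ne k$; such a maximal collection exists by Zorn's lemma (or simply because $M$ is bounded, as the argument below shows). By maximality, every point of $S\cap\support\mu$ lies within distance $2r$ of some $\bx_j$, so the balls $B(\bx_j,2r)$ cover $S\cap\support\mu$; a fortiori the balls $B(\bx_j,3r)$ do, and these are the balls of radius $3r$ we want.

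Next I would bound $M$. Since the $\bx_j$ are pairwise $2r$-separated, the balls $B(\bx_j,r)$ are pairwise disjoint. Each such ball is contained in $B(S,r)$ because $\bx_j\in S$, so the balls $B(\bx_j,r)$ sit disjointly inside $B(S,r)$. Hence
\[
\sum_{j=1}^{M}\mu\left(B\left(\bx_j,r\right)\right)\;=\;\mu\left(\bigcup_{j=1}^{M}B\left(\bx_j,r\right)\right)\;\le\;\mu\left(B\left(S,r\right)\right).
\]
Now each $\bx_j\in\support\mu$ and $0<r\le r_0$, so the lower bound in \eqref{eq:AhlforsRegular} gives $\mu(B(\bx_j,r))\ge A^{-1}r^\alpha$. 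Combining, $M\cdot A^{-1}r^\alpha\le\mu(B(S,r))$, i.e.\ $M\le A\mu(B(S,r))/r^\alpha$, which is exactly \eqref{eq:covering}. The finiteness hypothesis $\mu(B(S,r))<\infty$ guarantees $M<\infty$, so the maximal packing is in fact finite (and one can simply extract it greedily). This completes the proof.

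There is essentially no serious obstacle here: the only points requiring a modicum of care are (i) verifying that a maximal $2r$-separated set exists and is finite — which follows immediately once one has the disjointness-plus-measure bound, or alternatively from a routine Zorn's lemma application — and (ii) making sure the separation constants line up, namely that $2r$-separation yields disjoint radius-$r$ balls while still producing a cover by radius-$2r$ (hence radius-$3r$) balls. Both are standard. One should also note that the metric-space generality causes no trouble: we never use any structure beyond the triangle inequality and the Ahlfors regularity bound, and the slightly wasteful factor of $3$ in the radius (rather than $2$) gives extra slack that is harmless for the applications in Section~\ref{sec:proof}.
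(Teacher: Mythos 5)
Your proof is correct and follows essentially the same route as the paper: a maximal packing of points of $S\cap\support\mu$ whose radius-$r$ balls are disjoint (your $2r$-separation formulation is equivalent for this purpose), the inclusion of these disjoint balls in $B(S,r)$, and the lower Ahlfors bound to count them, with maximality yielding the cover by radius-$3r$ balls. No substantive differences to report.
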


\begin{proof}
Assume $S\subq X$ and $r>0$. Without loss of generality assume that $S\neq\varnothing$. Note that $B(S,r)$ is a Borel set. Choose any collection of points $\calu\subq S\cap \support \mu$ such that
$\left\{B\left(x,r\right)\sep x\in\calu\right\}$ is a collection of pairwise disjoint balls.
By the pairwise disjointness and \eqref{eq:AhlforsRegular},
\begin{equation}\label{vb123}
\#\calu \times \frac{r^\alpha}{A} \leq \sum_{x\in\calu}\mu\left(B\left(x,r\right)\right) = \mu\left(\bigcup_{x\in\calu}B\left(x,r\right)\right) \leq \mu\left(B\left(S,r\right)\right).
\end{equation}
Since $\mu\left(B\left(S,r\right)\right)<\infty$, any such collection $\calu$ is finite. Therefore, of all the collections $\calu$ as above there exists one, say $\calu_{\max}$, with the maximal number of elements. Then, by its maximality, for any $x'\in S\cap\supp\mu$ there exists $x\in \calu_{\max}$ such that $\dist(x,x')\le 2r$. Then, $x'\in B(x,3r)$ and we conclude that $\left\{B\left(x,3r\right)\sep x\in\calu_{\max}\right\}$ is a cover of $S\cap\supp\mu$. Finally, \eqref{vb123} implies \eqref{eq:covering} and the proof is complete.
\end{proof}

\section{Homogeneous dynamics and quantitative nondivergence}\label{sec:daniCorrespondence}

The connection between Diophantine approximation and homogeneous dynamics is well known as the Dani correspondence.
In this context there is a beautiful relation between bounded orbits and badly approximable vectors. Throughout,
$\diag\left(b_1, \dots, b_d\right)$ denotes the $d\times d$ diagonal matrix with diagonal entries $b_1, \dots, b_d$.
\par Let $G := \sldr$ and $\Gamma := \sldz$. The homogeneous space $X_{d+1}:=G/\Gamma$ can be identified with
the moduli space of unimodular lattices in $\R^{d+1}$ via the following map:
\[ g\Gamma \in X_{d+1} \mapsto g\Z^{d+1}\,.\]
Given $\bw=(w_1,\dots,w_d)\in\calw_d$ and $b>1$, for any $n\in\bbz$ we let
\begin{equation}\label{eq:an}
a_{n}:=\left(\begin{array}{cccc}
b^{n}\\
& b^{-w_1n} \\
& & \ddots\\
&  & & b^{-w_dn}
\end{array}\right) \in G\,.
\end{equation}
Further, for any $\bx=(x_1,\dots,x_d)\in\bbr^d$ let
\begin{equation}\label{eq:ux}
u_{\bx}:=\left(\begin{array}{cccc}
1 &  x_1 & \cdots & x_d\\
& 1 & &  \\
& &  \ddots  & \\
&  &  & 1
\end{array}\right) \in G\,.
\end{equation}
For $\eps>0$ define the set
\[
K_{\eps}:=\left\{ \Lambda \in X_{d+1}\sep\left\|\bv\right\|\geq\eps\text{ for any }\bv\in\Lambda\setminus\left\{ 0\right\} \right\},
\]
where $\left\|\bv\right\|$ is the Euclidean norm of $\bv$.
Then, as is well known for any $\bx\in\bbr^d$ we have that
\begin{equation}\label{eq:daniCorrespondence}
\bx\in\bad\left(\bw\right)\;\iff\;\exists~\eps>0\text{ such that }a_{n}u_{\bx}\bbz^{d+1}\in K_{\eps}\text{ for every }n\in\bbn\,.
\end{equation}
See \cite[Appendix]{BPV} and \cite[Appendix A]{Beresnevich_BA} for detailed explanation of this equivalence.

Recall that by Mahler's criterion, the complements of the sets $K_\eps$ give a basis for the topology at $\infty$ in $X_{d+1}$,
 so \eqref{eq:daniCorrespondence} may be rephrased as $\bx\in\badw$
 if and only if $\left\{a_{n}u_{\bx}\bbz^{d+1}\sep n\in\bbn\right\}$ is bounded in $X_{d+1}$.

It is straightforward to verify that for every $\bx'\in\bbr^d$ we have that
\begin{equation}\label{vb101}
a_n u_{\bx'} a_n^{-1} = u_{\diag\left(b^{(1+w_1)n},\ldots,b^{(1+w_d)n}\right)\bx'}\,.
\end{equation}
Note that if $\bx=\bx_0+\bx'$ then $u_{\bx}=u_{\bx'}u_{\bx_0}$ and therefore
\[
a_{n}u_{\bx}\bbz^{d+1} = a_n u_{\bx'} a_n^{-1}a_n u_{\bx_0}\bbz^{d+1} ~\stackrel{\eqref{vb101}}{=}~ u_{\diag\left(b^{(1+w_1)n},\ldots,b^{(1+w_d)n}\right)\bx'} a_n u_{\bx_0}\bbz^{d+1}\,.
\]
Thus, on letting $\Lambda = a_n u_{\bx_0}\bbz^{d+1}$ and $\by=\diag\left(b^{(1+w_1)n},\ldots,b^{(1+w_d)n}\right)\bx'$ we see that the set of parameters $\by\in\bbr^{d}$ for which $u_{\by}\Lambda\in K_\eps$ plays a role in the study of bounded orbits of $u_{\bx}\bbz^{d+1}$ under the actions by $a_n$. The Dani-Kleinbock-Margulis quantitative nondivergence estimate (see \cite{Dani5, KleinbockMargulis2}) gives a sharp and uniform upper bound on the Lebesgue measure of the set of $\by$ for which $u_\by\Lambda\not\in K_\eps$ under some conditions on the lattice $\Lambda$. Later this was generalised to ``friendly'' measures by Kleinbock, Lindenstrauss and Weiss \cite{KLW}. Within this paper we will use the following direct consequence of Theorem~5.11 in \cite{BNY1}, which in turn is a consequence of the results of \cite{KLW}.

\begin{thm}\label{thm:KLW}
Assume $\mu$ is an Ahlfors regular absolutely decaying measure on $\bbr^{d}$.
Then for any $\bz\in \supp\mu$ there exists an open ball $B(\bz)$ centred at $\bz$ and constants $C,\gamma>0$ such that for any ball $B\subq B(\bz)$ centred in $\supp\mu$, any diagonal matrix $g\in G$ and any $0<\rho\le1$ at least one of the following two conclusions holds\/{\rm:}\\[-2ex]
\begin{enumerate}
\item[{\rm(i)}] for all $\varepsilon>0$
\begin{equation}\label{eq:KLW}
\mu\left(\left\{\bx\in B:gu_{\bx}\bbz^{d+1}\notin K_{\eps}\right\}\right)\le C\left(\frac{\varepsilon}{\rho}\right)^{\gamma}\mu(B)\,;
\end{equation}
\item[{\rm(ii)}] there exists $\vv0\neq\vv v=\vv v_1\wedge\cdots\wedge\vv v_j\in\bigwedge^j\left(\Z^{d+1}\right)$ with $1 \le j \le d$ such that
$$
\sup\limits_{x\in B}\|gu_{\bx}\vv v\|< \rho\,.
$$
\end{enumerate}
\end{thm}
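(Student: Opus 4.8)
The plan is to obtain Theorem~\ref{thm:KLW} as a direct consequence of the quantitative nondivergence estimate for friendly measures, in the precise form of Theorem~5.11 of \cite{BNY1} (which in turn rests on \cite{KLW}), applied to the affine family of maps $\bx\mapsto gu_{\bx}$. The first step is to record the dictionary. By Mahler's criterion, $gu_{\bx}\bbz^{d+1}\notin K_{\eps}$ exactly when $\|gu_{\bx}\bv'\|<\eps$ for some nonzero $\bv'\in\bbz^{d+1}$, so estimating $\mu$ of the set in \eqref{eq:KLW} is precisely a nondivergence problem for the lattice-valued map $\bx\mapsto gu_{\bx}\bbz^{d+1}$. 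The quantities controlling such an estimate are $\bx\mapsto\|gu_{\bx}\bv\|$ with $\bv=\bv_1\wedge\cdots\wedge\bv_{j}\in\bigwedge^{j}(\bbz^{d+1})$ decomposable, which are the covolumes of the primitive subgroups of $\bbz^{d+1}$ transported by $gu_{\bx}$; the alternative in the statement is the standard dichotomy for these estimates, with $\rho$ playing the role of the critical covolume.

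Next I would check the hypotheses of the nondivergence estimate. Since $u_{\bx}$ is unipotent with entries affine in $\bx$, the matrix $gu_{\bx}$ is affine in $\bx$ for every $g\in G$ --- the diagonality of $g$ is irrelevant here and is kept only because that is all we need in Section~\ref{sec:proof} --- and hence the entries of $\bigwedge^{j}(gu_{\bx})$, being $j\times j$ minors of $gu_{\bx}$, are polynomials in $\bx$ of degree at most $j\le d$, and so are the coordinates of each $gu_{\bx}\bv$. On the measure side, $\mu$ is doubling because it is Ahlfors regular, and absolutely decaying by assumption; consequently $\mu$ is friendly in the sense of \cite{KLW}, and polynomials in $\bx$ of degree at most $d$ --- together with the norms of the relevant polynomial maps --- are $(C_0,\sigma_0)$-good with respect to $\mu$ on a small enough ball, with $C_0,\sigma_0$ depending only on $d$ and on the constants $A,\alpha,D,\delta$ attached to $\mu$. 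This localisation is precisely why the statement furnishes an open ball $B(\bz)$ around each $\bz\in\supp\mu$: one chooses $B(\bz)$ small enough that the Ahlfors regularity bound \eqref{eq:AhlforsRegular}, the absolute decay bound \eqref{eq:absolutelyDecaying}, and these good-function constants all hold on $B(\bz)$ uniformly.

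With these inputs, \cite[Theorem 5.11]{BNY1} applies to any ball $B\subq B(\bz)$ centred in $\supp\mu$ and any $0<\rho\le1$, and returns the stated alternative: either the subgroup condition fails for every decomposable $\bv$, in which case \eqref{eq:KLW} holds for all $\eps>0$ (for $\eps\ge\rho$ it is trivial after enlarging $C$, and for $\eps<\rho$ it is the content of the nondivergence estimate), or there is a decomposable $\vv0\ne\bv\in\bigwedge^{j}(\bbz^{d+1})$ with $\sup_{x\in B}\|gu_{\bx}\bv\|<\rho$. The range $1\le j\le d$ is automatic: a vector representing the full lattice $\bbz^{d+1}$ has norm equal to a nonzero integer multiple of $|\det(gu_{\bx})|=1$, hence at least $1\ge\rho$, so it can never satisfy the inequality. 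The constants $C,\gamma>0$ are those coming out of the estimate and depend only on $d,A,\alpha,D,\delta$; in particular they do not depend on $g$ or $\rho$.

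The one point that needs genuine attention, as opposed to bookkeeping, is exactly this uniformity of $C$, $\gamma$ and $B(\bz)$ over all $g$: a priori the good-function constants of a polynomial may deteriorate as its coefficients vary, and the coefficients of $gu_{\bx}$ do depend on $g$. The resolution is that the $(C_0,\sigma_0)$-good property of \cite{KLW} for friendly measures depends on a polynomial only through an upper bound on its degree, and the coordinate polynomials of $gu_{\bx}$ on all exterior powers have degree at most $d$ regardless of $g$; so the good-function constants, and therefore the nondivergence constants, are uniform in $g$. Once this is noted, the statement is \cite[Theorem 5.11]{BNY1} specialised to the family $\bx\mapsto gu_{\bx}$, and I would conclude by invoking it.
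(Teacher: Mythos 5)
Your proposal is correct and takes essentially the same route as the paper, which gives no independent proof of Theorem~\ref{thm:KLW} but states it as a direct consequence of \cite[Theorem 5.11]{BNY1} (itself resting on \cite{KLW}); your argument amounts to the same citation together with a verification of its hypotheses. The only quibble is that you do not need goodness of degree-$d$ polynomials (which is delicate for general absolutely decaying measures): for this one-row unipotent $u_{\bx}$ the coordinates of $gu_{\bx}\vv v$ are affine in $\bx$ (cf.\ \eqref{eq:uaction}), so absolute decay, which controls neighborhoods of hyperplanes, is exactly what is required, and the uniformity of $C,\gamma$ in $g$ and $\rho$ follows as you say.
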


In order to use Theorem~\ref{thm:KLW} some notation related to the action of $G$ on the exterior algebra of $\bbr^{d+1}$ is set up in the rest of this section.

Let $\be_{+}:=(1,0,\ldots,0)$ and $\be_i := (0,\ldots,1,\ldots,0)$, where for every $1\leq i\leq d$ the $i+1$st coordinate is one and the rest are zero, be the standard basis of $\bbr^{d+1}$. For any $I\subq\{+,1\ldots,d\}$
let $\be_I=\bigwedge_{i\in I}\be_i$ be the wedge product of basis elements with indices in $I$.
For any $1\leq j\leq d$ the collection $\left\{\be_I\sep\#I=j\right\}$ is a basis
of $\bigwedge^j\left(\bbr^{d+1}\right)$. Define an inner product on $\bigwedge^j\left(\bbr^{d+1}\right)$
by setting $\left\langle\be_I,\be_J\right\rangle=\delta_{I,J}$ (where $\delta_{I,J}:=1$ if $I= J$ and $\delta_{I,J}:=0$ otherwise) and extending linearly. Let $\|\cdot\|$ be the Euclidean norm which is derived from this inner product. Note that this notation is consistent with that of Theorem~\ref{thm:KLW}.

For every $1\leq j\leq d$ define the subspaces
$$
V_{+} := \linearspan_\bbr\{\be_{I}\sep +\in I\}\qquad\text{and}\qquad V_{-} := \linearspan_\bbr\{\be_{I}\sep I\subq\{1,\ldots,d\}\}\,.
$$
Each vector $\bv\in\bigwedge^j\left(\bbr^{d+1}\right)$ decomposes uniquely into $\bv=\bv_{+}+\bv_{-}$ with $\bv_{+}\in V_{+}$ and $\bv_{-}\in V_{-}$.

Let $G$ act on $\bigwedge^j\left(\bbr^{d+1}\right)$ by linear transformations defined on wedge products as follows: for any $g\in G$ and $\bv=\bv_1\wedge\ldots\wedge\bv_j\in\bigwedge^j\left(\bbr^{d+1}\right)$ we define
\begin{equation}\label{g_action}
g\bv = g\bv_1\wedge\ldots\wedge g\bv_j\,.
\end{equation}

\begin{prop}\label{prop:exterior}
Assume $\bx\in\bbr^d$, $h\in\Z$, $h\ge0$ and $\bv=\bv_1\wedge\ldots\wedge\bv_j\in\bigwedge^j\left(\bbr^{d+1}\right)$, $1\le j\le d$. Then, assuming that $\bv_i=v_{i,+}\be_++v_{i,1}\be_1+\dots+v_{i,d}\be_d$, we have that
\begin{align}
u_{\bx}\bv=\bv + \be_{+}\wedge\left(\sum_{i=1}^j (-1)^{i+1} \left(v_{i,1}x_1+\ldots+v_{i,d}x_d\right)\bigwedge_{i'\neq i}\bv_{i'}\right); \label{eq:uaction} \\
\|a_{-h}\bv_{+}\| \leq b^{-w_dh}\|\bv_{+}\|\quad\text{and }\quad \|a_{-h}\bv_{-}\| \leq b^{h}\|\bv_{-}\|\,, \label{eq:aaction}
\end{align}
where $w_d$ is assumed to be the smallest weight.
\end{prop}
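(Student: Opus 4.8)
The plan is to verify the two displayed assertions separately, both by direct computation in coordinates using the basis $\{\be_I : \#I = j\}$ of $\bigwedge^j(\bbr^{d+1})$.

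For \eqref{eq:uaction}, I would first observe that $u_{\bx}\be_+ = \be_+$ and $u_{\bx}\be_k = \be_k$ for $1 \le k \le d$ is false --- rather, from \eqref{eq:ux} one has $u_{\bx}\be_k = \be_k$ for $1\le k\le d$ and $u_{\bx}\be_+ = \be_+$, so actually the nontrivial action is on the standard basis read the other way; the correct statement is that $u_\bx$ fixes $\be_+$ and sends $\be_k \mapsto \be_k$ is wrong. Let me instead compute directly: writing $u_{\bx}$ as in \eqref{eq:ux}, we have $u_{\bx}\be_+ = \be_+$ (first column) and $u_{\bx}\be_k = x_k \be_+ + \be_k$ for $1 \le k \le d$ (the $(k+1)$st column has $x_k$ in the top row and $1$ on the diagonal). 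Wait --- reading \eqref{eq:ux} again, the first row is $(1, x_1, \dots, x_d)$, so the matrix acts on a column vector $(y_+, y_1,\dots,y_d)^T$ by sending it to $(y_+ + \sum_k x_k y_k,\, y_1,\dots,y_d)^T$; hence $u_\bx \be_+ = \be_+$ and $u_\bx \be_k = \be_k + x_k \be_+$. Then for a decomposable vector $\bv = \bv_1 \wedge \dots \wedge \bv_j$ with $\bv_i = v_{i,+}\be_+ + \sum_{k=1}^d v_{i,k}\be_k$, multilinearity of the wedge and the action \eqref{g_action} give
\[
u_\bx \bv = \bigwedge_{i=1}^j\Bigl(v_{i,+}\be_+ + \sum_{k=1}^d v_{i,k}(\be_k + x_k\be_+)\Bigr) = \bigwedge_{i=1}^j\Bigl(\bv_i + \bigl(\textstyle\sum_k v_{i,k}x_k\bigr)\be_+\Bigr).
\]
Expanding this product of $j$ sums, the term with no $\be_+$-correction gives $\bv$, and any term with two or more $\be_+$-corrections vanishes since $\be_+ \wedge \be_+ = 0$; the remaining terms have exactly one factor replaced, namely $\bv_i$ replaced by $(\sum_k v_{i,k}x_k)\be_+$ in the $i$th slot, which contributes $(\sum_k v_{i,k}x_k)\,\bv_1\wedge\dots\wedge\be_+\wedge\dots\wedge\bv_j$. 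Moving $\be_+$ to the front past $i-1$ vectors produces the sign $(-1)^{i-1} = (-1)^{i+1}$ and leaves $\be_+ \wedge \bigwedge_{i'\neq i}\bv_{i'}$. Summing over $i$ yields exactly \eqref{eq:uaction}. This establishes the first identity for decomposable $\bv$, which suffices since both sides are the relevant multilinear expressions (and \eqref{eq:uaction} as stated is only asserted for decomposable $\bv$).

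For \eqref{eq:aaction}, I would use that $a_{-h} = \diag(b^{-h}, b^{w_1 h}, \dots, b^{w_d h})$ from \eqref{eq:an}, so on the basis vectors $a_{-h}\be_+ = b^{-h}\be_+$ and $a_{-h}\be_k = b^{w_k h}\be_k$. Hence for a multi-index $I$ with $\#I = j$, the vector $\be_I = \bigwedge_{i\in I}\be_i$ is an eigenvector of the induced action with eigenvalue $b^{-h}\prod_{k \in I\setminus\{+\}} b^{w_k h}$ when $+\in I$, and $\prod_{k\in I}b^{w_k h}$ when $+\notin I$. For $\bv_+ \in V_+$ (the span of $\be_I$ with $+\in I$), each such eigenvalue equals $b^{-h}\cdot b^{h\sum_{k\in I\setminus\{+\}}w_k}$; since $I\setminus\{+\}$ has $j-1 \le d-1$ elements and all weights are nonnegative with $w_d$ the smallest, $\sum_{k\in I\setminus\{+\}}w_k \le \sum_{k=1}^d w_k - w_d = 1 - w_d$, so the eigenvalue is at most $b^{-h}\cdot b^{h(1-w_d)} = b^{-w_d h}$. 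Since distinct $\be_I$ are orthonormal and $a_{-h}$ acts diagonally on them, $\|a_{-h}\bv_+\| \le b^{-w_d h}\|\bv_+\|$. For $\bv_- \in V_-$ (the span of $\be_I$ with $I \subq \{1,\dots,d\}$), each eigenvalue is $b^{h\sum_{k\in I}w_k} \le b^{h\sum_{k=1}^d w_k} = b^h$ (using $h\ge 0$ and $w_k \ge 0$), giving $\|a_{-h}\bv_-\| \le b^h\|\bv_-\|$. This is \eqref{eq:aaction}.

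I do not anticipate a genuine obstacle here: both parts are bookkeeping. The only point requiring a little care is the sign in \eqref{eq:uaction} --- tracking the permutation that moves $\be_+$ to the front of the wedge in the $i$th term --- and the weight estimate $\sum_{k\in I\setminus\{+\}} w_k \le 1 - w_d$ in \eqref{eq:aaction}, which is where the hypothesis ``$w_d$ is the smallest weight'' together with $j \le d$ (hence $\#(I\setminus\{+\}) \le d-1$) is used. Everything else follows from multilinearity of the exterior product and the fact that the $a_{-h}$-action is diagonal in the $\be_I$ basis.
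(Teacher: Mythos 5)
Your proposal is correct and follows essentially the same route as the paper's (deliberately terse) proof: you derive $u_{\bx}\bv_i=\bv_i+\left(v_{i,1}x_1+\ldots+v_{i,d}x_d\right)\be_+$ and expand the wedge using the alternating property to get \eqref{eq:uaction}, and you verify \eqref{eq:aaction} on the orthonormal eigenbasis $\be_I$ of $a_{-h}$ with the correct eigenvalue bounds $b^{-w_dh}$ and $b^{h}$ (the estimate $\sum_{k\in I\setminus\{+\}}w_k\le 1-w_d$ being exactly the point where $j\le d$ and minimality of $w_d$ enter). The only suggestion is cosmetic: remove the false-start remarks at the beginning of your treatment of \eqref{eq:uaction} before writing it up.
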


\begin{proof}
Both \eqref{eq:uaction} and \eqref{eq:aaction} are elementary to prove. Indeed, \eqref{eq:uaction} is an immediate consequence of definition \eqref{g_action} and the easily verified equation $u_{\bx}\bv_i=\bv_i+\left(v_{i,1}x_1+\ldots+v_{i,d}x_d\right)\be_+$ together with the alternating property of the wedge product and, in particular, the fact that $\be_+\wedge\be_+=\vv0$. In turn, since the standard basis $\be_I$ of $\bigwedge^j\left(\R^{d+1}\right)$, where $I\subq\{+,1,\dots,d\}$ and $\#I=j$, is orthonormal and each of $\be_I$ is an eigenvector of $a_{-h}$, it suffices to verify \eqref{eq:aaction} for the basis vectors $\be_I$. The latter is a trivial job done by inspecting \eqref{eq:aaction}. We leave further computational details, which are straightforward, to the reader.
\end{proof}

When applying Theorem~\ref{thm:KLW} in Section~\ref{sec:proof} we will use the following simple bound.

\begin{lem}\label{thm:KLW3.3}
For every ball $B\subq\bbr^d$, every diagonal matrix $g=\diag \left(b_+,b_1,\ldots,b_d\right)$ such that $b_+\geq1$ and $0 < b_1,\ldots b_d \leq 1$ such that $b_+b_1\cdots b_d=1$, and every $\vv v=\vv v_1\wedge\cdots\wedge\vv v_j\in\bigwedge^j\left(\Z^{d+1}\right)$ with $1 \le j \le d+1$ such that $\bv\neq\vv0$ we have that
\begin{equation}\label{eq:KLW3.3}
\sup\limits_{x\in B}\|gu_{\bx}\vv v\|\ge \min\left\{1,r_B\right\},
\end{equation}
where $r_B$ is the Euclidean radius of $B$.
\end{lem}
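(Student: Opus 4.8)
The plan is to exploit the lower-triangular/upper-triangular structure of $g u_{\bx}$ together with the fact that $g$ is volume-preserving, and reduce everything to the case $j=1$ and $j=d+1$ being easy, with the general $j$ following by a wedge-product argument. First I would dispose of the trivial extremes: if $j=d+1$ then $\bv$ is a nonzero scalar multiple of $\be_+\wedge\be_1\wedge\cdots\wedge\be_d$, and since $g$ and $u_{\bx}$ both have determinant $1$ we get $\|g u_{\bx}\bv\|=\|\bv\|\ge1$, so \eqref{eq:KLW3.3} holds with room to spare. For the main range $1\le j\le d$ the key observation is formula \eqref{eq:uaction}: writing $\bv=\bv_++\bv_-$ according to the decomposition into $V_+$ and $V_-$, we have $u_{\bx}\bv = \bv + \be_+\wedge\bw(\bx)$ for a certain $\bw(\bx)\in\bigwedge^{j-1}(\linearspan\{\be_1,\dots,\be_d\})$ depending affinely on $\bx$, and crucially the correction term lies in $V_+$ while the part of $\bv$ lying in $V_-$ is untouched. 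Hence $(u_{\bx}\bv)_- = \bv_-$ and $(u_{\bx}\bv)_+ = \bv_+ + \be_+\wedge\bw(\bx)$.

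Now I split into two cases according to whether $\bv_-=\vv0$ or not. If $\bv_-\ne\vv0$, then since $g$ acts diagonally in the $\be_I$ basis with the eigenvalue on $\be_I$ for $I\subq\{1,\dots,d\}$ equal to $\prod_{i\in I}b_i$, and each $b_i\le1$ — wait, that shrinks rather than grows; the right move is to look instead at the $V_+$ component and use that $b_+\ge1$ forces expansion there. So more precisely: the eigenvalue of $g$ on $\be_I$ with $+\in I$ is $b_+\prod_{i\in I\setminus\{+\}}b_i = \prod_{i\notin I}b_i \ge 1$ (using $b_+ b_1\cdots b_d=1$ and $b_i\le1$), so $g$ is nonexpanding on $V_-$ and... no. The clean statement I want is: $g$ preserves the decomposition $V_+\oplus V_-$, is expanding-or-isometric on $V_+$ (every eigenvalue $\ge1$), and contracting-or-isometric on $V_-$ (every eigenvalue $\le1$); this is immediate from the eigenvalue computation above. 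Therefore, if $\bv_-\ne\vv0$ I instead evaluate at a cleverly chosen $\bx$: I want to make $(u_{\bx}\bv)_+$ large. Looking at \eqref{eq:uaction}, the coefficient of $\be_+\wedge\be_{I'}$ in $\be_+\wedge\bw(\bx)$ is a linear form in $\bx$ whose coefficients are (up to sign) the coordinates of $\bv_-$ in the $\be_{I'\cup\{i\}}$ basis; since $\bv_-\ne\vv0$, at least one such linear form is nonconstant, so as $\bx$ ranges over the ball $B$ of radius $r_B$ the quantity $\|\bv_+ + \be_+\wedge\bw(\bx)\|$ attains a value $\ge c\, r_B$ for a dimensional constant — and in fact, choosing $\bx$ at the boundary of $B$ in the right direction, a value $\ge r_B$ (the linear form has a coefficient which is a nonzero integer, hence of absolute value $\ge1$). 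Then $\|gu_{\bx}\bv\|\ge\|(gu_{\bx}\bv)_+\|=\|g(u_{\bx}\bv)_+\|\ge\|(u_{\bx}\bv)_+\|\ge r_B$.

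Finally, if $\bv_-=\vv0$, then $\bv=\bv_+\ne\vv0$ already lies in $V_+$; by \eqref{eq:uaction} the correction term $\be_+\wedge\bw(\bx)$ also lies in $V_+$ and depends only on the $\bv_-$-coordinates, which all vanish here, so actually $\bw(\bx)=\vv0$ and $u_{\bx}\bv=\bv$. Hmm — then $\|gu_{\bx}\bv\|=\|g\bv\|\ge\|\bv\|\ge1$ since $\bv\in V_+$ where $g$ is nonexpanding... again wrong direction: nonexpanding means $\|g\bv\|\le\|\bv\|$? No: expanding on $V_+$ means $\|g\bv\|\ge\|\bv\|$, which is what the eigenvalue-$\ge1$ computation gives. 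And $\bv\in\bigwedge^j(\Z^{d+1})$ nonzero forces $\|\bv\|\ge1$ since its coordinates in the orthonormal integer basis $\be_I$ are integers, not all zero. So $\|gu_{\bx}\bv\|\ge1$. Combining the three cases, $\sup_{\bx\in B}\|gu_{\bx}\bv\|\ge\min\{1,r_B\}$ as claimed.

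The main obstacle I anticipate is being careful about the case split on $\bv_-$ and correctly identifying that the integrality of the coordinates of $\bv$ (an integer wedge of integer vectors) is what produces the clean constants $1$ and $r_B$ rather than mere dimensional constants; one must also double-check that the relevant linear form in $\bx$ genuinely has an integer coefficient of absolute value at least one when $\bv_-\ne\vv0$, which follows because that coefficient is, up to sign, one of the (nonzero, integer) coordinates of $\bv$ in the $\be_I$ basis. Everything else — the eigenvalue computation for $g$ on $V_\pm$, and formula \eqref{eq:uaction} — is already provided by Proposition~\ref{prop:exterior}.
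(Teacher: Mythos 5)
Your argument is correct, and it reaches the bound by a genuinely different route than the paper, although the two proofs share their engine. The paper singles out one explicit coordinate of $gu_{\bx}\bv$: it chooses rows $\ell_2,\dots,\ell_d\in\{1,\dots,d\}$ making a certain $j\times j$ minor (one of the Pl\"ucker coordinates, via Schmidt's Lemma 6A) not identically zero, factors it as $\prod_k b_{\ell_k}\,f(\bx)$ with $f$ affine with integer coefficients, notes $b_+b_{\ell_2}\cdots b_{\ell_j}\ge b_+b_1\cdots b_d=1$, and then splits on whether $f$ is constant (giving $\ge1$) or has a nonzero integer linear coefficient (giving $\ge r_B$ by evaluating at $\bx_0\pm r_B\be_k$). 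You instead exploit the $V_+\oplus V_-$ decomposition already set up in Proposition~\ref{prop:exterior}: $u_{\bx}$ fixes the $V_-$ component and perturbs the $V_+$ component affinely in $\bx$ with coefficients given (up to sign) by coordinates of $\bv_-$, while $g$ preserves the splitting and has all eigenvalues $\ge1$ on $V_+$; your case split is on $\bv_-=\vv0$ (then $u_\bx\bv=\bv\in V_+$ has integer coordinates, so $\|g\bv\|\ge\|\bv\|\ge1$) versus $\bv_-\neq\vv0$ (then some coordinate of $(u_{\bx}\bv)_+$ is affine in $\bx$ with a nonzero integer coefficient, and the antipodal-points evaluation gives $\ge r_B$). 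The core mechanisms are identical — integrality producing the constant $1$, and an affine coordinate with an integer coefficient $\ge1$ in absolute value producing $r_B$ over the ball — but your bookkeeping avoids the paper's choice of linearly independent rows and the explicit determinant identity, at the price of invoking the eigenvalue computation for $g$ on $V_\pm$ and the linearity observation that $u_{\bx}\bv-\bv$ depends only on $\bv_-$ (which you do justify, via the action on the basis $\be_I$). One small slip to fix in writing it up: the eigenvalue of $g$ on $\be_I$ with $+\in I$ is $b_+\prod_{i\in I\setminus\{+\}}b_i=\prod_{i\in\{1,\dots,d\}\setminus I}b_i^{-1}$, not $\prod_{i\notin I}b_i$; the conclusion that it is $\ge1$, which is all you use, is of course correct.
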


\begin{proof}
The case of $j=d+1$ is trivial since in this case we have that $\|gu_{\bx}\vv v\|=\|\vv v\|\ge1$ for all $\bx$.
Let $1\le j\le d$, $\vv v=\vv v_1\wedge\dots\wedge\vv v_j\in\bigwedge^j\left(\Z^{^{d+1}}\right)$ and $\vv v\neq\vv0$. Let $\tbx=(1,x_1,\dots,x_d)$ and write each $\vv v_i=\left(v_{i,+},v_{i,1},\dots,v_{i,d}\right)$. Then
\begin{equation}\label{vb767}
gu_{\bx}\vv v=\bigwedge_{i=1}^j
\left(\begin{array}{c}
b_+\langle\tbx,\vv v_i\rangle\\
b_1v_{i,1}\\
\vdots\\
b_dv_{i,d}
\end{array}\right).
\end{equation}
Since $\vv v\neq\vv0$, there exists a collection $\{\ell_2,\dots,\ell_d\}\subq \{1,\dots,d\}$ such that the rows $\left(b_{\ell_k}v_{1,\ell_k}, \dots , b_{\ell_k}v_{j,\ell_k}\right)$ $(2\le k\le d)$ are linearly independent. It follows that the determinant
$$
\det\left(\begin{array}{ccc}
b_+\langle\tbx,\vv v_1\rangle &\dots & b_+\langle\tbx,\vv v_j\rangle\\
b_{\ell_2}v_{1,\ell_2}& \dots & b_{\ell_2}v_{j,\ell_2}\\
\vdots & \ddots &\vdots\\
b_{\ell_j}v_{1,\ell_j}&\dots &b_{\ell_j}v_{j,\ell_j}
\end{array}\right)=
\prod_{k=1}^jb_{\ell_k}\times\det\left(\begin{array}{ccc}
\langle\tbx,\vv v_1\rangle &\dots & \langle\tbx,\vv v_j\rangle\\
v_{1,\ell_2}& \dots & v_{j,\ell_2}\\
\vdots & \ddots &\vdots\\
v_{1,\ell_j}&\dots &v_{j,\ell_j}
\end{array}\right),
$$
where $\ell_1=+$, is not identically zero. Here we used the obvious fact that the functions $\langle\tbx,\vv v_1\rangle,\dots,\langle\tbx,\vv v_j\rangle$ are linearly independent over $\R$, which follows from the linear independence of $\vv v_1,\dots,\vv v_j$. Observe that the above determinant is one of the coordinates of $gu_{\bx}\vv v$ (see \cite[Section IV.6 Lemma 6A]{Schmidt3}). Furthermore, since all the vectors $\vv v_i$ are integer, it is of the form $\prod_{k=1}^jb_{\ell_k}f(\bx)$, where  $f(\bx)=c_0+c_1x_1+\dots+c_dx_d$ for some integer coefficients $c_0,\dots,c_d$, not all zeros. Since the norm of $gu_{\bx}\vv v$ is at least the absolute value of any of its coordinates, using the assumptions that $b_+\ge 1$, $0 < b_1,\ldots,b_d \leq 1$ and $b_+b_1\cdots b_d=1$ gives
\begin{equation}\label{vb777}
\|gu_{\bx}\vv v\|\ge \left|\prod_{k=1}^jb_{\ell_k}f(\bx)\right|\ge
\left|f(\bx)\right|.
\end{equation}
If $c_1=\dots=c_d=0$, then the right hand side of \eqref{vb777} is a nonzero integer and therefore it is at least 1. Otherwise, $c_k\neq0$ for some $1\le k\le d$. Then, take the points $\bx_{\pm1}=\bx_0\pm\,r_B\vv e_k$, where $\bx_0$ is the centre of $B$. Then, $|f\left(\bx_{+1}\right)-f\left(\bx_{-1}\right)|=|2c_kr_B|\ge2\,r_B$. Consequently, using the triangle inequality, we get that $\sup_{\bx\in B}\|gu_{\bx}\vv v\|\ge \max\left\{\left|f\left(\bx_{+1}\right)\right|,\left|f\left(\bx_{-1}\right)\right|\right\}\ge r_B$. The proof is complete.
\end{proof}

\section{Proof of Theorem~\ref{thm:intersection}}\label{sec:proof}

Let $\bw$ be any weight, $\mu$ be a compactly supported Ahlfors regular absolutely decaying measure on $\R^d$ and let $A,\alpha$ and $\rho_0$ be as in \eqref{eq:AhlforsRegular}. For every $\bz\in\supp\mu$ let $B(\bz)$ be the ball arising from Theorem~\ref{thm:KLW}. Clearly, $\left\{\frac12B(\bz):\bz\in\supp\mu\right\}$ is an open cover of $\supp\mu$. Since $\supp\mu$ is compact, there is a finite subcover $\left\{\frac12B\left(\bz_u\right): 1\le u\le U\right\}$ of $\supp\mu$. Thus,
\begin{equation}\label{cover}
\supp\mu\subq\bigcup_{u=1}^U \tfrac12B\left(\bz_u\right).
\end{equation}

\begin{prop}\label{C-gamma}
There exist positive constants $C$ and $\gamma$ with the following property. For every ball $B$ centred in $\supp\mu$ that is contained in one of the balls $B\left(\bz_u\right)$ with $1\le u\le U$ and such that statement (ii) of Theorem~\ref{thm:KLW} does not hold
inequality \eqref{eq:KLW} holds for all $\eps>0$.
\end{prop}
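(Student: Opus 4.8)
The plan is to produce the uniform constants by taking, respectively, the maximum and the minimum of the finitely many constants that Theorem~\ref{thm:KLW} supplies at the centres $\bz_1,\dots,\bz_U$, and then to check that this choice works. For each $1\le u\le U$ let $C_u,\gamma_u>0$ be the constants that Theorem~\ref{thm:KLW} associates to $\bz=\bz_u$, and set
\[
C:=\max\{1,C_1,\dots,C_U\}\qquad\text{and}\qquad\gamma:=\min\{\gamma_1,\dots,\gamma_U\}\,,
\]
so that $C\ge1>0$ and $\gamma>0$.

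Next, fix a ball $B$ centred in $\supp\mu$ with $B\subq B(\bz_u)$ for some $1\le u\le U$, together with a diagonal matrix $g\in G$ and a real number $0<\rho\le1$ for which statement (ii) of Theorem~\ref{thm:KLW} fails. Since $B\subq B(\bz_u)$, Theorem~\ref{thm:KLW} applies with $\bz=\bz_u$, and as statement (ii) fails, statement (i) must hold, i.e.
\[
\mu\bigl(\{\bx\in B:gu_{\bx}\bbz^{d+1}\notin K_\eps\}\bigr)\le C_u(\eps/\rho)^{\gamma_u}\mu(B)\qquad\text{for every }\eps>0\,.
\]
It then remains only to replace $C_u$ and $\gamma_u$ by the uniform $C$ and $\gamma$, which I would do by distinguishing two ranges of $\eps$. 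If $\eps\ge\rho$, then $(\eps/\rho)^\gamma\ge1$ and $C\ge1$, so, since the set on the left is contained in $B$, its $\mu$-measure is at most $\mu(B)\le C(\eps/\rho)^\gamma\mu(B)$, and \eqref{eq:KLW} holds trivially. If instead $0<\eps<\rho$, then $0<\eps/\rho<1$, so $s\mapsto(\eps/\rho)^s$ is decreasing; as $\gamma\le\gamma_u$ this gives $(\eps/\rho)^\gamma\ge(\eps/\rho)^{\gamma_u}$, and, combined with $C\ge C_u$ and the displayed estimate furnished by statement (i), we obtain
\[
\mu\bigl(\{\bx\in B:gu_{\bx}\bbz^{d+1}\notin K_\eps\}\bigr)\le C_u(\eps/\rho)^{\gamma_u}\mu(B)\le C(\eps/\rho)^{\gamma}\mu(B)\,,
\]
which is again \eqref{eq:KLW}. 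Since $\eps>0$ was arbitrary, the proposition follows.

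I do not expect any real obstacle here; the proposition is in essence a bookkeeping statement that records the finiteness of the cover \eqref{cover}. The only two points that deserve a moment's care are that one must insist on $C\ge1$ in order to absorb the trivial regime $\eps\ge\rho$, where the left-hand side of \eqref{eq:KLW} is automatically bounded by $\mu(B)$, and that lowering the exponent from $\gamma_u$ to $\gamma$ is harmless precisely because in the only nontrivial range the base $\eps/\rho$ lies in $(0,1)$.
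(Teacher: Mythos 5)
Your proof is correct and follows essentially the same route as the paper: since the cover \eqref{cover} is finite, one takes $C$ to be the maximum and $\gamma$ the minimum of the constants furnished by Theorem~\ref{thm:KLW} over the balls $B(\bz_u)$. Your extra case distinction for $\eps\ge\rho$ (with the insistence $C\ge1$) is a careful touch the paper leaves implicit, but it does not change the argument.
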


\begin{proof}
The existence of $C$ and $\gamma$ follows from Theorem~\ref{thm:KLW} since we have a finite collection of balls $B\left(\bz_u\right)$ and so $C$ can be taken its maximal value over $B\left(\bz_u\right)$ and $\gamma$ can be taken its minimal value over $B\left(\bz_u\right)$.
\end{proof}

Recall that the ultimate goal is to show that $\bad(\bw)\cap\support\mu\neq\varnothing$.
Note that the support of a Borel measure is closed thus $X := \support \mu\subq\bbr^d$ is complete. Then, by Theorem~\ref{thm:cantor-winning-nonempty} with $S :=\badw \cap \support \mu$, it suffices to show that $S$ is $\alpha'$-Cantor winning (in the sense of the Cantor potential game played on $X$) for
some $0\leq\alpha'<\alpha$. The specific value of $\alpha'$ we use will be defined in \eqref{eq:alphatag} below.

We will describe a winning strategy for Alice for the $\alpha'$-Cantor potential game. Assume Bob chooses $B_0$ and $\beta$ on his first move. Recall that $B_0$ is a closed ball in $X=\supp\mu$ defined by its centre $\bx_0{\in X}$ and radius $r_0$, {that is $B_0=X\cap B(\bx_0,r_0)$}. Before describing Alice's strategy we start with several simplifying assumptions. Without loss of generality we can assume that $r_0$ is less than $\tfrac{1}{10}$ of the radius of every ball $\frac12B\left(\bz_u\right)$  $(1\le u\le U)$ appearing in \eqref{cover}. This can be done as a result of Alice playing arbitrarily for several moves until the condition is met.
Let $u_0$ be such that $\bx_0\in\tfrac12B\left(\bz_{u_0}\right)$, {which exists due to \eqref{cover}.} Then using the triangle inequality and the above condition on $r_0$ we conclude that
\begin{equation}\label{inclusion}
B(\bx_0,5r_0)\subq B(\bz),\qquad\text{where $\bz=\bz_{u_0}$}\,.
\end{equation}
Here $B(\bx_0,r_0)$ is the ball in $\R^d$ of radius $r_0$ centred at $\bx_0$.
Also without loss of generality we will assume that
$$
r_0\leq \min\left\{\tfrac13\rho_0,\frac{1}{\sqrt{d}},\;A^{-1/\alpha}\right\},
$$
where $\rho_0$, $A$ and $\alpha$ are as in \eqref{eq:AhlforsRegular}.
In particular, by \eqref{eq:AhlforsRegular}, we have that
\begin{equation}\label{eq:measureOfB0}
\mu(B_0)\le 1\,.
\end{equation}
Without loss of generality we will assume that
\[
w_1\ge w_2\ge \ldots\geq w_d > 0\,.
\]
Define formally $w_{d+1}:=0$. Then there exists a unique integer $t$ such that $1\leq t\leq d$ and
\[
w_1=\ldots=w_t > w_{t+1}\,.
\]
Without loss of generality we may assume that $\beta$ is small (to be determined according to \eqref{eq:largeEnough1} and \eqref{eq:largeEnough2}). 
This can be done as a result of applying Alice's strategy described below with $\beta^M$ for some integer $M\geq1$ and 
letting Alice play arbitrarily on every step of the game which is not $1$ modulo $M$. Let $b>1$ be such that
\[
\beta = b^{-(1+w_1)}\,.
\]
Recall that $b$ is a parameter appearing in the definition of $a_n$, see \eqref{eq:an}. For any $n\geq0$ denote Bob's $n$th move by
\[
B_n=X\cap B\left(\bx_n,\beta^nr_0\right).
\]
Thus, $B_n$ is a ball in $X$ of radius $\beta^nr_0$ centred at $\bx_n\in X$. For every integer $\ell\in\bbz$ define {the following diagonal matrix}
{\begin{equation}\label{eq:dl}
d_{\ell}:=\diag\left(\beta^{\frac{t\ell}{d+1}},~\underbrace{\beta^{-\frac{(d+1-t)\ell}{d+1}},\dots, \beta^{-\frac{(d+1-t)\ell}{d+1}}}_{\text{$t$ times}},
 ~\underbrace{\beta^{\frac{t\ell}{d+1}},\dots,\beta^{\frac{t\ell}{d+1}}}_{\text{$d-t$ times}}\right) \in G\,.
\end{equation}
Also, given any $\eps > 0$, $B\subq \R^d$ and non-negative integers $n$, $k$ and $\ell$, let
\begin{equation}\label{key_sets}
A^{k,\ell}_\eps(B) :=\left\{ \bx\in B \sep d_{\ell}a_{k}u_{\bx}\notin K_{\eps}\right\}.
\end{equation}
When $B=B_n$ we will write $A^{k,\ell,n}_\eps$ for $A^{k,\ell}_\eps(B_n)$.
The sets $A^{k,\ell,n}_\eps$ will be used to define Alice's winning strategy, see \eqref{eq:extremelyDangerous}--\eqref{thestrategy}. 
It will be apparent from the definition of Alice's strategy and Definition~\ref{def:cantorWinning} that our proof crucially 
depends on obtaining suitably precise upper bounds on the $\mu$-measure of certain neighborhoods of the sets $A^{k,\ell,n}_\eps$. 
The following lemma, which provides such bounds, is therefore the key step in defining Alice's winning strategy.

\begin{lem}\label{lem:keyLemma}
Let $C$ and $\gamma$ be the same as in Proposition~\ref{C-gamma} and let
$$
C'=C A^2\max\left\{2^\alpha(2r_0)^{-\gamma},\, 3^\alpha \right\}.
$$
Then for any quintuple of nonnegative integers $(h,k,\ell,m,n)$, if
\begin{align}
d_{\ell+m}a_{k}u_{\zblue{\tilde\bx_n}}\bbz^{d+1} & \in K_{\sqrt{d+1}\beta^{\frac{m}{d+1}}{r_0^{\zblue{-1}}}}\,,\label{eq:firstAssumption}
\end{align}
and
\begin{align}
d_\ell a_{k-h}u_{\zblue{\tilde\bx_n}}\bbz^{d+1} & \in K_{\zblue{\sqrt{2}d}\beta^{\frac{\tau}{d}}},\label{eq:secondAssumption}
\end{align}
for some point $\tilde\bx_n\in B_n$, where
\begin{equation}\label{tau}
\tau=\zblue{\min}\left\{k-\ell-m-n-\frac{h}{1+w_1},\frac{hw_d}{1+w_1}\right\}\zblue{\ge0}\,,
\end{equation}
then for any $\eps > 0$
\begin{equation}\label{eq:result1}
\mu\left(A^{k,\ell,n}_\eps\right) \leq \zblue{C'}\eps^{\gamma}\mu\left(B_n\right).
\end{equation}
If $n=0$ and $k\geq \frac{1+w_1}{w_1}\ell$ then \eqref{eq:result1}
%\begin{equation}\label{eq:result11}
%\mu\left(A^{k,\ell,0}_\eps\right)
%\leq C' \eps^{\gamma}\mu\left(B_{\blue{0}}\right)
%\end{equation}
holds for any $\eps>0$ without assuming \eqref{eq:firstAssumption} and \eqref{eq:secondAssumption}.
%\noindent
Moreover, if $0<r\le \beta^{n}r_0$ and
\begin{equation}\label{eq:rho}
\zblue{\eps' := \left(1+\max\left\{\beta^{\ell-k},b^{\left(1+w_{t+1}\right)k}\right\}r\right)\eps \,,}
\end{equation}
where $w_{t+1}=0$ if $t=d$, then
\begin{equation}\label{eq:result2}
\mu\left(B\left(A^{k,\ell,n}_\eps,r\right) \right) \leq C'{\eps'}^{\gamma}\mu\left(B_n\right),
\end{equation}
\zblue{which is valid for all $n$ assuming \eqref{eq:firstAssumption} and \eqref{eq:secondAssumption} and for $n=0$ assuming $k\geq \frac{1+w_1}{w_1}\ell$.}
\end{lem}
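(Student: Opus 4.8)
The only substantial input is Theorem~\ref{thm:KLW}, applied through Proposition~\ref{C-gamma} to the diagonal matrix $g:=d_\ell a_k\in G$; the constants $C,\gamma$ are those of Proposition~\ref{C-gamma}. The plan is to run Theorem~\ref{thm:KLW} on a Euclidean ball $B$ slightly larger than $B_n$ and centred at the same point: for \eqref{eq:result1} take $B:=B(\bx_n,\beta^nr_0)$, and for \eqref{eq:result2} take $B:=B(\bx_n,\beta^nr_0+2r)$. The reductions made before the lemma (in particular \eqref{inclusion}, $r\le\beta^nr_0$, and $\bx_n\in B_0$) give $B\subq B(\bz)$, so Proposition~\ref{C-gamma} applies; moreover, since $\mu$ is Ahlfors regular and lives on $\supp\mu$, one has $\mu(B)\le A^2 3^\alpha\mu(B_n)$, with $\mu(B)=\mu(B_n)$ in the first case. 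Now Theorem~\ref{thm:KLW} offers alternatives~(i) and~(ii) for a parameter $\rho$ of our choosing. If~(i) holds, then, because $\mu(A^{k,\ell,n}_\eps)=\mu(\{\bx\in B:gu_\bx\bbz^{d+1}\notin K_\eps\})$, the estimate \eqref{eq:KLW} immediately yields \eqref{eq:result1} with the constant $C'$. For \eqref{eq:result2} one first checks that, up to constants absorbed into $C'$, $B(A^{k,\ell,n}_\eps,r)\cap\supp\mu\subq\{\bx\in B:gu_\bx\bbz^{d+1}\notin K_{\eps'}\}$: if $gu_{\bx_0}\bbz^{d+1}\notin K_\eps$ and $|\bx-\bx_0|\le r$ then $gu_\bx=gu_{\bx-\bx_0}g^{-1}\cdot gu_{\bx_0}$ with $gu_{\bx-\bx_0}g^{-1}=u_{\bx'}$, and writing $g=\diag(g_+,g_1,\dots,g_d)$ the elementary identity $g_+/g_i\in\{\beta^{\ell-k}\}\cup\{b^{(1+w_i)k}:t<i\le d\}$ (using $\beta=b^{-(1+w_1)}$) gives $\|\bx'\|\lesssim\max\{\beta^{\ell-k},b^{(1+w_{t+1})k}\}\,r$, which is exactly the scaling in \eqref{eq:rho}; hence \eqref{eq:result2} follows from \eqref{eq:KLW} with $\eps$ replaced by a constant multiple of $\eps'$.

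It therefore remains to rule out alternative~(ii): that some $\vv0\neq\bv=\bv_1\wedge\cdots\wedge\bv_j\in\bigwedge^j(\bbz^{d+1})$ with $1\le j\le d$ satisfies $\sup_{\bx\in B}\|gu_\bx\bv\|<\rho$. This is the heart of the matter and is where the hypotheses \eqref{eq:firstAssumption} and \eqref{eq:secondAssumption} enter. The idea is to exploit the exterior-algebra structure supplied by Proposition~\ref{prop:exterior}. Evaluating at the distinguished point $\tilde\bx_n\in B_n\subq B$, put $\bw:=u_{\tilde\bx_n}\bv$, so that $\|g\bw\|<\rho$, and decompose $\bw=\bw_+ +\bw_-$ along $V_+\oplus V_-$; by \eqref{eq:uaction} (which in particular says $u_{\tilde\bx_n}$ fixes $V_+$ pointwise) we have $\bw_-=\bv_-\in\bigwedge^j(\bbz^{d+1})$ and $\bw_+=\bv_+ +\be_+\wedge\bp$ with $\bp$ linear in $\tilde\bx_n$, and since $g$ is diagonal it respects this splitting. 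The crucial structural remark is that the two lattices in \eqref{eq:firstAssumption} and \eqref{eq:secondAssumption} are obtained from $gu_{\tilde\bx_n}\bbz^{d+1}$ by applying the diagonal factors $d_m$ and $a_{-h}$, namely $d_{\ell+m}a_ku_{\tilde\bx_n}\bv=d_m(g\bw)$ and $d_\ell a_{k-h}u_{\tilde\bx_n}\bv=g(a_{-h}\bw)$; the effect of $a_{-h}$ on $V_+$ (contraction by $b^{-w_dh}$) and on $V_-$ (expansion by $b^h$) is recorded in \eqref{eq:aaction}, and $d_m$ admits the analogous elementary estimate, this time governed by the partition of the indices $\{1,\dots,d\}$ into the $d_\ell$-expanding block $\{1,\dots,t\}$ and its complement.

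Given a dangerous $\bv$, one now distinguishes cases according to the type of its dominant component — whether it lies in $V_+$, or in $V_-$ concentrated in the block $\{t+1,\dots,d\}$, or in $V_-$ involving the block $\{1,\dots,t\}$ — and, from $\|g\bw\|<\rho$ together with the contraction/expansion rates above, deduces that $\bv$ would produce a nonzero integer exterior vector (possibly $\bv$ itself, possibly $\bv_-$, possibly an exterior-algebra companion built from the complementary indices) which is strictly shorter than the threshold $(\sqrt2\,d\,\beta^{\tau/d})^{j'}$ in \eqref{eq:secondAssumption} or the threshold $(\sqrt{d+1}\,\beta^{m/(d+1)}r_0^{-1})^{j'}$ in \eqref{eq:firstAssumption}, for the appropriate rank $j'$; here one uses the standard fact that a lattice in $K_\eta$ has all nonzero $j'$-fold exterior vectors of norm $\gtrsim_d\eta^{j'}$, and the inflated dimensional constants $\sqrt2\,d$ and $\sqrt{d+1}$ are exactly what is needed to absorb this loss. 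The bookkeeping — matching the powers of $\beta$, $b$, $w_d$, and the rank $j'$ — is governed precisely by the definition \eqref{tau} of $\tau$ and by its nonnegativity; the parameter $\rho$ is a fixed power of $\beta$ (independent of the quintuple) so that the constant in~(i) stays bounded. In the special case $n=0$, $k\ge\frac{1+w_1}{w_1}\ell$, no hypothesis on $\tilde\bx_0$ is needed: that inequality is exactly what puts $g=d_\ell a_k$ into the form required by Lemma~\ref{thm:KLW3.3} ($g=\diag(b_+,b_1,\dots,b_d)$ with $b_+\ge1$, $0<b_i\le1$), whence $\sup_{\bx\in B}\|gu_\bx\bv\|\ge\min\{1,r_B\}$, and taking $\rho:=\min\{1,r_B\}$ rules out~(ii).

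Everything is routine except the quantitative analysis ruling out alternative~(ii): the reductions to Ahlfors regularity, the conjugation identity behind $\eps'$, and Lemma~\ref{thm:KLW3.3} are all elementary. The real obstacle is to handle a dangerous exterior vector of ``$V_-$ involving the block $\{1,\dots,t\}$'' type — there the naive pushforward of $\bv$ (or of $\bv_-$) by $d_m$ or by $a_{-h}$ can expand rather than contract, so one must instead extract a genuinely short integer exterior vector from the complementary indices and play the two independent diagonal actions against each other — together with the verification that in every case all exponents line up with $\tau$ and with the precise thresholds in \eqref{eq:firstAssumption}--\eqref{eq:secondAssumption}. This splitting of the dangerous vector by its type, and feeding the pieces to two independent diagonal flows, is the new idea highlighted in the introduction.
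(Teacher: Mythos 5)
Your overall framework is the same as the paper's: apply Theorem~\ref{thm:KLW} through Proposition~\ref{C-gamma} to $g=d_\ell a_k$ on a slightly enlarged ball centred in $\supp\mu$, compare $\mu(B)$ with $\mu(B_n)$ by Ahlfors regularity to produce $C'$, obtain \eqref{eq:result2} from the conjugation identity behind $\eps'$, and dispose of the case $n=0$, $k\ge\frac{1+w_1}{w_1}\ell$ via Lemma~\ref{thm:KLW3.3}. Those parts are fine (modulo the fact that you only get $\eps'$ up to an implied constant, whereas the lemma fixes $C'$ and $\eps'$ exactly). But the proof has a genuine gap exactly where the lemma lives: you never actually rule out alternative~(ii) of Theorem~\ref{thm:KLW} from the hypotheses \eqref{eq:firstAssumption}--\eqref{eq:secondAssumption}. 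You describe a case distinction by ``dominant component'' and a plan to ``extract a short integer exterior vector from the complementary indices,'' and you then explicitly flag the case of a $V_-$-component meeting the block $\{1,\dots,t\}$ --- precisely the case where the pushforwards by $d_m$ and $a_{-h}$ expand --- as an unresolved obstacle whose exponent bookkeeping you have not verified. That case analysis \emph{is} the content of the lemma; without it nothing is proved.

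Moreover, the sketch as written cannot be completed, because you anchor it at the single point $\tilde\bx_n$, using only $\|g\,u_{\tilde\bx_n}\bv\|<\rho$. A bound at one point carries no information about the radius $\beta^n r_0$ of $B_n$, yet $\tau$ in \eqref{tau} contains the term $k-\ell-m-n-\frac{h}{1+w_1}$ with an explicit $-n$: the contradiction with \eqref{eq:secondAssumption} must come from the \emph{supremum over the whole ball} appearing in alternative~(ii). Concretely, the paper argues as follows. From $\|\bv'\|<1$ at the centre (where $\bv'=d_\ell a_k u_{\tilde\bx_n}\bv$) one passes, via Minkowski's theorem, to a basis whose first vector satisfies $\|\bv_1'\|<\sqrt d\le r_0^{-1}$. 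If all coordinates $v'_{1,i}$, $1\le i\le t$, are $<\beta^m r_0^{-1}$, then $d_m\bv_1'$ violates \eqref{eq:firstAssumption}. Otherwise $|v'_{1,i_0}|\ge\beta^m r_0^{-1}$ for some $i_0\le t$, and one evaluates the supremum at the two points $\bx'=\pm\beta^n r_0\be_{i_0}$ of the ball: through \eqref{eq:uaction} this gives $\|\be_+\wedge\tilde\bv\|<\beta^{k-\ell-n}r_0^{-1}$ for the companion vector $\tilde\bv$, and the wedge identity $\bv_1'\wedge\tilde\bv=v'_{1,i_0}\bv'$ converts the \emph{lower} bound on $|v'_{1,i_0}|$ into the upper bound $\|\bv'_-\|<\sqrt d\,\beta^{k-\ell-m-n}$; only with this $n$-dependent bound in hand does applying $a_{-h}$ (contracting $V_+$ and expanding $V_-$ at the rates \eqref{eq:aaction}) yield a vector of norm $<\sqrt{2d}\,\beta^\tau$ and contradict \eqref{eq:secondAssumption}. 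Your one-point information gives only $\|\bv'_-\|<1$, which $a_{-h}$ expands to a useless $b^h$; no choice of ``dominant component'' trichotomy recovers the missing factor $\beta^{k-\ell-m-n}$ without using the variation of $u_{\bx'}$ over the ball and the wedge-product trick above. So the proposal identifies the right tools but does not prove the key estimate \eqref{eq:result1}.
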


\begin{proof}
Let $n\ge0$ and write $\bx=\tilde\bx_n+\bx'$ with $\left\|\bx'\right\| \le \beta^{n}r_0$. Since $w_1=\dots=w_t$ and $\beta=b^{-(1+w_1)}$,
conjugating $u_{\bx'}$ by $d_\ell a_k$ in the equation $u_{\bx}=u_{\bx'}u_{\tilde\bx_n}$ gives
\begin{equation}\label{eq:dlakconj}
d_\ell a_{k}u_{\bx} =
u_{\left(\beta^{\ell-k}x_1',\ldots,\beta^{\ell-k}x_t',b^{\left(1+w_{t+1}\right)k}x_{t+1}',\ldots,b^{\left(1+w_d\right)k}x_{d}'\right)}d_\ell a_{k}u_{\tilde\bx_n}\,.
\end{equation}
Note that \eqref{inclusion} and the fact $\tilde\bx_n\in B_n=X\cap B\left(\bx_n,\beta^nr_n\right)$ imply that
\begin{equation}\label{vb2000}
B\left(\bx_n, 2\beta^nr_0\right)\subq B\left(\tilde\bx_n, 3\beta^nr_0\right)\subq B\left(\bx_n, 4\beta^nr_0\right)
\subq B\left(\bx_0, 5r_0\right)\subq B(\bz)\,,
\end{equation}
\zblue{where $B(\bz)$ is as in \eqref{inclusion}}. Let $1\leq j\leq d$ and $0\neq\bv=\bv_1\wedge\ldots\wedge\bv_j\in\bigwedge^j\left(\bbz^{d+1}\right)$.
Let $\bv'=d_\ell a_{k}u_{\zblue{\tilde\bx_n}}\bv$ and
$\bv_i'=d_\ell a_{k}u_{\zblue{\tilde\bx_n}}\bv_i$ for every $1\leq i\leq j$. Assume towards a contradiction that
\begin{equation}\label{eq:contrary}
\sup_{\|\bx'\|\le\beta^{n}{r_0}}\left\|u_{\left(\beta^{\ell-k}x_1',\ldots,\beta^{\ell-k}x_t',b^{\left(1+w_{t+1}\right)k}x_{t+1}',\ldots,b^{\left(1+w_d\right)k}x_{d}'\right)}\bv'\right\|<1\,.
\end{equation}

Inequality \eqref{eq:contrary} applied at $\bx'=0$ implies that $\left\|\bv'\right\|<1$. Since $\|\,\,\|$ is the Euclidean norm, $\left\|\bv'\right\|=\left\|\bv'_1\wedge\ldots\wedge\bv'_j\right\|$ is the covolume of the lattice $\Lambda^*:=\Z\bv'_1+\ldots+\Z\bv'_j$ in the Euclidean subspace $W^*:=\R\bv'_1+\ldots+\R\bv'_j$ of $\R^{d+1}$. That is $\operatorname{covol}\left(\Lambda^*\right)<1$. Next, as is well known the Euclidean ball $B^*_{\sqrt j}$ in $W^*$ centred at $0$ of radius $\sqrt j$ contains a cube $\mathcal{C}^*$ of sidelength $2$, {\em i.e.}\/ $\mathcal{C}^*:=\left\{\sum_{i=1}^j\theta_i\be^*_i\sep |\theta_i|\le1\right\}$, where $\be^*_1,\dots,\be^*_j$ is any orthonormal basis in $W^*$. Therefore $B^*_{\sqrt j}$ has $j$-dimensional volume $> 2^j$. Then, by Minkowski's theorem for convex bodies, $B^*_{\sqrt j}$ contains a non-zero point of $\Lambda^*$. In other words, the shortest non-zero vector of $\Lambda^*$, say $\tilde\bv'_1$, has Euclidean norm $\le\sqrt j$. Complete $\tilde\bv'_1$ to a basis $\tilde\bv'_1,\dots,\tilde\bv'_j$ of $\Lambda^*$, {\em e.g.}\/ to a reduced Minkowski basis. Then $\bv'_1\wedge\ldots\wedge\bv'_j=\pm\tilde\bv'_1\wedge\ldots\wedge\tilde\bv'_j$, since the two bases span the same linear subspace of $\R^{d+1}$ and have the same Euclidean norm (equal to the covolume of $\Lambda^*$). To save on notation, without loss of generality we will assume that $\bv'_i=\tilde\bv'_i$ $(1\le i\le j)$. Then, we have that
\begin{equation}\label{eq:minkowski}
\left\|\bv_1'\right\|<\sqrt j\le \sqrt{d}  \,\le \zblue{r_0^{-1}}\,.
\end{equation}

If $\left|v_{1,i}'\right|<\beta^{m}r_0^{\zblue{-1}}$ for all $1\leq i\leq t$, where $\bv'_1=\left(v_{1,+}',v_{1,1}',\dots,v_{1,d}'\right)$, then using \eqref{eq:minkowski} we get that
\begin{equation}\label{eq:rem}
\|d_{m}\bv_1'\| < \zblue{\sqrt{d+1}\beta^{\frac{t m}{d+1}}r_0^{-1}\le }\sqrt{d+1}\beta^{\frac{m}{d+1}}r_0^{\zblue{-1}}\,,
\end{equation}
and so, \zblue{since $\bv_1'=d_\ell a_{k}u_{\zblue{\tilde\bx_n}}\bv_1$ with $\bv_1\in\Z^{d+1}\setminus\{0\}$ and $d_md_\ell=d_{\ell+m}$, we get that}
$$
d_{\ell+m}a_{k}u_{\zblue{\tilde\bx_n}}\bbz^{d+1}\notin K_{\sqrt{d+1}\beta^{\frac{m}{d+1}} r_0^{\zblue{-1}}}\,,
$$
which contradicts assumption \eqref{eq:firstAssumption}.

Otherwise, there exists $1\leq i_0 \leq t$ for which
\begin{equation}\label{eq:lemma_v11}
\left|v_{1,i_0}'\right|\geq \beta^{m}r_0^{\zblue{-1}}\,.
\end{equation}
It is enough to use \eqref{eq:contrary} for $\bx'$ of the form $\bx'=\left(0,\ldots,x_{i_0}',\ldots,0\right)$
where the only nonzero entry is in the $i_0$th coordinate. In this case, let
\begin{equation}\label{eq:tilde}
\tilde{\bv}=\sum_{i=1}^j (-1)^{i+1}v_{i,i_0}'\bigwedge_{i'\neq i}\bv_{i'}'\,.
\end{equation}
Then, {by \eqref{eq:uaction}, we have that}
\begin{equation}\label{eq:lemma_uaction_xaxis}
u_{\left(\beta^{\ell-k}x_1',\ldots,\beta^{\ell-k}x_t',b^{\left(1+w_{t+1}\right)k}x_{t+1}',\ldots,b^{\left(1+w_d\right)k}x_{d}'\right)}\bv'=\bv' + \beta^{\ell-k}x_{i_0}'\be_{+}\wedge\tilde{\bv}
\end{equation}
\zblue{for all $\bx'=\left(0,\ldots,x_{i_0}',\ldots,0\right)$ with $|x_{i_0}'|\le\beta^{n}{r_0}$. By \eqref{eq:contrary},
$
\left\|\bv' \pm \beta^{\ell-k}x_{i_0}'\be_{+}\wedge\tilde{\bv}\right\|<1
$
when $x_{i_0}'=\beta^{n}r_0$. Then, by the triangle inequality, $$
\left\|2\beta^{\ell-k}x_{i_0}'\be_{+}\wedge\tilde{\bv}\right\| \le \left\|\bv' + \beta^{\ell-k}x_{i_0}'\be_{+}\wedge\tilde{\bv}\right\| + \left\|\bv' - \beta^{\ell-k}x_{i_0}'\be_{+}\wedge\tilde{\bv}\right\|<2\,.
$$
Letting $x_{i_0}'=\beta^{n}r_0$ and dividing the above
inequality through by $2\beta^{\ell-k}x_{i_0}'$ give}
\begin{equation}\label{eq:lemma_vector}
\left\|\be_+\wedge\tilde{\bv}\right\| < \beta^{k-\ell-n}\zblue{r_0^{-1}}\,.
\end{equation}
\zblue{Observe that $\be_+\wedge\tilde{\bv}_+=0$, and thus
$\be_+\wedge\tilde{\bv}=\be_+\wedge\tilde{\bv}_-$. Furthermore,
$\left\|\be_+\wedge\tilde{\bv}_-\right\|=\|\tilde{\bv}_{-}\|$} and therefore \eqref{eq:lemma_vector} can be rewritten as
\begin{equation}\label{eq:lemma_vector1}
\left\|\tilde{\bv}_{-}\right\| < \beta^{k-\ell-n}\zblue{r_0^{-1}}\,.
\end{equation}
On the other hand, by \eqref{eq:tilde}, taking the wedge product of $\bv_{1}'$ and $\tilde{\bv}$ gives
\begin{equation}\label{eq:lemma_wedge}
\zblue{\bv_{1}'\wedge\tilde{\bv} = v_{1,i_0}'\bv'\,,}
\end{equation}
so equations \eqref{eq:minkowski}, \eqref{eq:lemma_v11}, \eqref{eq:lemma_vector1} and \eqref{eq:lemma_wedge} yield
\[
\left\|\bv_{-}'\right\| \leq \left|v_{1,i_0}'\right|^{-1}\left\|\bv_{1,-}'\right\|\left\|\tilde{\bv}_{-}\right\| < \zblue{\sqrt{d}}\beta^{k-\ell-m-n}\,.
\]
Applying the right hand side of \eqref{eq:aaction} gives
\begin{equation}\label{vb701}
\left\|a_{-h}\bv_{-}'\right\| < \zblue{\sqrt{d}}\beta^{k - \ell - m - n - \frac{h}{1+w_1}}\,.
\end{equation}
\zblue{Next, by \eqref{eq:contrary}, $\|\bv_{+}'\|\le\|\bv'\|<1$, and so applying
the left hand side of \eqref{eq:aaction} gives}
\begin{equation}\label{vb702}
\left\|a_{-h}\bv_{+}'\right\| < b^{-w_dh}\,.
\end{equation}
\zblue{Combining \eqref{vb701} and \eqref{vb702} together with the fact
that $a_{-h}\bv'=d_\ell a_{k-h}u_{\zblue{\tilde\bx_n}}\bv$} we obtain
\begin{equation}\label{eq:this}
\left\|d_\ell a_{k-h}u_{\zblue{\tilde\bx_n}}\bv\right\| = \left\|a_{-h}\bv'\right\| < \sqrt{2\zblue{d}}\max\left\{\beta^{k - \ell - m - n - \frac{h}{1+w_1}},b^{-hw_d}\right\}\zblue{=\sqrt{2d}\beta^{\tau}\,,}
\end{equation}
\zblue{where $\tau$ is given by \eqref{tau}.}
Using Minkowski's theorem for convex bodies in the same way as we did in the argument leading to \eqref{eq:minkowski},
we deduce from \eqref{eq:this} that the lattice $d_\ell a_{k-h}u_{}\bbz^{d+1}$ has a nonzero vector which Euclidean norm is smaller 
than
$$
\sqrt{j}\left(\sqrt{2d}\beta^\tau\right)^\frac{1}{j}\le
\sqrt{2}d\beta^\frac{\tau}{j}\le \sqrt{2}d\beta^\frac{\tau}{d}
$$
since $\tau\ge0$ and $0<\beta<1$, contrary to \eqref{eq:secondAssumption}. Thus, \eqref{eq:contrary} cannot hold and therefore, 
by \eqref{eq:dlakconj}, we have that Condition (ii) within Theorem~\ref{thm:KLW} cannot hold with $\rho=1$ and $B=B\left(\tilde\bx_n, 3\beta^nr_0\right)$. 
Hence, by Theorem~\ref{thm:KLW} with this choice of $\rho$ and $B$, which is applicable in view of Proposition~\ref{C-gamma} and \eqref{vb2000}, we obtain that
\begin{equation}\label{eq:result1+}
\mu\left(\left\{ \bx\in B\left(\tilde\bx_n,3\beta^nr_0\right) \sep d_{\ell}a_{k}u_{\bx}\notin K_{\eps}\right\}\right)\le C\eps^{\gamma}\mu\left(B(\tilde\bx_n,3\beta^nr_0)\right).
\end{equation}
Since $\bx_n$ and $\tilde\bx_n$ are both in the support of $\mu$ and $3r_0\le\rho_0$, by \eqref{eq:AhlforsRegular}, we have that
\begin{align}
\mu\left(B(\tilde\bx_n,3\beta^nr_0)\right)&\le A \left(3\beta^nr_0\right)^\alpha= 3^\alpha A^2 A^{-1} \left(\beta^nr_0\right)^\alpha\label{vb2003}\\
&\le 3^\alpha A^2\mu\left(B(\bx_n,\beta^nr_0)\right)= 3^\alpha A^2\mu(B_n)\,. \nonumber
\end{align}
Further observe that, by \eqref{vb2000}, the left hand side of \eqref{eq:result1+} is an upper bound for $\mu\left(A^{k,\ell}_\eps(2B_n)\right)$. Hence, combining \eqref{eq:result1+} and \eqref{vb2003} gives
\begin{equation}\label{eq:result1++}
\mu\left(A^{k,\ell}_\eps(2B_n)\right) \leq C'\eps^{\gamma}\mu\left(B_n\right).
\end{equation}
And since trivially we have that $A^{k,\ell,n}_\eps\subq A^{k,\ell}_\eps(2B_n)$, \eqref{eq:result1++} 
implies \eqref{eq:result1}, as required.

Regarding the case $n=0$ first observe that, by \eqref{eq:AhlforsRegular}, we have that
\begin{equation}\label{vb4000}
\mu(2B_n)\le A^22^\alpha\mu(B_n).
\end{equation}
Now if $k\geq \frac{1+w_1}{w_1}\ell$ then $d_\ell a_k = \diag(b_+,b_1,\dots,b_d)$
with $b_{+}\geq1$ and $b_i\leq1$ for every $1\leq i\leq d$, so Theorem~\ref{thm:KLW} with $\rho=2r_0$ together
with Lemma~\ref{thm:KLW3.3} and \eqref{vb4000} immediately imply \eqref{eq:result1++} and consequently \eqref{eq:result1}.

To see \eqref{eq:result2}, assume that $0<r\le\beta^nr_0$ and $\eps'$ is given by \eqref{eq:rho}. If $\bx \in A^{k,\ell,n}_\eps$
then there exists $\bv\in\bbz^{d+1}\setminus\{0\}$ such that $\|d_\ell a_k u_\bx \bv \|<\eps$. Suppose that $\|\by-\bx\| < r$ and
define $\bx'=\by-\bx$. Using the conjugation of $u_{\bx'}$ by $d_\ell a_k$ as in \eqref{eq:dlakconj} we get that
\begin{align*}
d_\ell a_{k}u_{\by}\bv & = u_{\left(\beta^{\ell-k}x_1',\ldots,\beta^{\ell-k}x_t',b^{\left(1+w_{t+1}\right)k}x_{t+1}',\ldots, b^{\left(1+w_d\right)k}x_{d}'\right)}d_\ell a_{k}u_{\bx}\bv\\[1ex]
&= d_\ell a_{k}u_{\bx}\bv+\langle\tilde\bv,d_\ell a_{k}u_{\bx}\bv\rangle\be_+\,,
\end{align*}
where
$
\tilde\bv=\left(0,\beta^{\ell-k}x_1',\ldots,\beta^{\ell-k}x_t',b^{\left(1+w_{t+1}\right)k}x_{t+1}',\ldots, b^{\left(1+w_d\right)k}x_{d}'\right).
$
Then on using the triangle and Cauchy-Schwarz inequalities we get that
$$
\left\|d_\ell a_{k}u_{\by}\bv\right\| \le
\left\|d_\ell a_{k}u_{\bx}\bv\right\|+|\langle\tilde\bv,d_\ell a_{k}u_{\bx}\bv\rangle|\le
\left(1+\|\tilde\bv\|\right)\left\|d_\ell a_{k}u_{\bx}\bv\right\|.
$$
Observe that
$$
\left\|\tilde\bv\right\|\le \max\left\{\beta^{\ell-k},b^{\left(1+w_{t+1}\right)k}\right\}\cdot\|\bx'\|
\le \max\left\{\beta^{\ell-k},b^{\left(1+w_{t+1}\right)k}\right\}r
$$
and therefore
\[
\|d_\ell a_k u_{\by} \bv\| < \left(1+\max\left\{\beta^{\ell-k},b^{\left(1+w_{t+1}\right)k}\right\}r\right)\eps = \eps'\,.
\]
Further, since $\bx \in A^{k,\ell,n}_\eps\subq B_n$ and $\|\by-\bx\|\le r\le \beta^n r_0$, we have that $\by\in2B_n$.
Then, $B\left(A^{k,\ell,n}_\eps,r\right)\subq  A^{k,\ell}_{\eps'}(2B_n)$ and applying \eqref{eq:result1++} with $\eps$ replaced by $\eps'$ gives \eqref{eq:result2}.
\end{proof}

\medskip

To complete the proof of Theorem~\ref{thm:intersection}, let
\begin{align}
s& :=\max\left\{ 5, \left\lceil\frac{1+w_1}{w_1-w_{t+1}}\right\rceil,\left\lceil\frac{2(1+w_1)+1}{w_1}\right\rceil\right\}\label{eq:s},\\
\eta    & := \min\left\{\frac{1}{4(d+1)},\frac{w_ds}{2d(1+w_1)},\frac{w_1s-2(1+w_1)}{2d(1+w_1)},\frac{\alpha}{\gamma}\right\}\label{eq:eta},\\
\alpha' & := \alpha-\frac{\gamma\eta}{4(s-1)}\label{eq:alphatag}\,,
\end{align}
where it is agreed, if needed (i.e., in case $t=d$), that $w_{d+1}=0$. Note that \eqref{eq:s} and \eqref{eq:eta} imply that $\eta>0$, $0\leq\alpha'<\alpha$ and $s\geq5$. Assume that $\beta$ is small enough so that it satisfies
\begin{align}
&\beta^{-\frac{w_1s-2(1+w_1)}{2d(1+w_1)}} \geq \zblue{\sqrt{2}d},\quad \beta^{-\frac{w_ds}{2d(1+w_1)}} \geq \zblue{\sqrt{2}d} \quad\text{and}\quad \beta^{-\frac{1}{\zblue{2}(d+1)}} \geq \sqrt{d+1}\zblue{r_0^{-1}}\,,\label{eq:largeEnough1}\\
&\beta^{\frac{\gamma\eta}{s-1}} \leq \min\left\{2^{-1} ,\zblue{\left(A^2C'2^{\gamma+1}(3r_0^{-1})^\alpha\right)^{-1}}  \right\}.\label{eq:largeEnough2}
\end{align}
\zblue{Recall that $A$ and $\alpha$ are the parameters of $\mu$ appearing in \eqref{eq:AhlforsRegular}, $C'$ is given in Lemma~\ref{lem:keyLemma}, $C$ and $\gamma$ are as in Proposition~\ref{C-gamma} and $r_0$ is the radius of $B_0$ which choice is described after Proposition~\ref{C-gamma}.}

Now let us describe the winning strategy for Alice. We will keep notation used in Definition~\ref{def:cantorWinning}.
Alice's strategy is understood as a sequence of maps $\cF_n$ indexed by $n\in\N$ which assign a 
sequence $\left(\mathcal{A}_{n+1,i}\right)_{i\ge0}$ of collections of at 
most $\beta^{-\alpha(i+1)}$ balls of radius $\beta^{n+1+i}r_0$ to Bob's moves $\beta,B_0,\dots,B_n$, that is $\left(\mathcal{A}_{n+1,i}\right)_{i\ge0}=\cF_n(\beta,B_0,\dots,B_n)$. The sets $\mathcal{A}_{n+1,i}$ will be defined in a 3-step process, which can be described as follows:
\begin{itemize}
  \item[(i)] Choose `raw' subsets $A_{n+1, i} \subq B_n$ that Alice wishes to `block out' as she plays the game; then
  \item[(ii)] Refine $A_{n+1, i}$ to obtain subsets $\tilde A_{n+1, i}\subq A_{n+1, i}$ by removing overlaps between the sets $A_{n+1, i}$ that appear at different stages of the game; and
  \item[(iii)] Finally `convert' $\tilde{A}_{n+1,i}$ into the required collections of balls $\mathcal{A}_{n+1,i}$ by using the efficient covering of Lemma~\ref{lem:covering}.
\end{itemize}

Naturally we start with Step~(i) to define the raw sets $A_{n+1, i}$. To begin with, for every $i\geq0$, let
\begin{equation}\label{eq:extremelyDangerous}
A_{1,i} :=  \bigcup_{\substack{\zblue{n'}\geq0,\;\ell \geq \frac{\zblue{n'}+1}{s}\\[0ex] \zblue{n'} + (s-1)\ell = i}} \left\{ \bx \in B_0\sep d_{\ell} a_{\zblue{n'}+1+s\ell}u_{\bx}\bbz^{d+1}\notin K_{\beta^{\eta\ell}}\right\},
\end{equation}
where the union is taken over all possible values of integers $\ell$ and $n'\ge0$.
Note that if no such values of $\ell$ and $n'$ exist, then $A_{1,i}=\varnothing$. Next,
for $n\geq 1$ define
\begin{equation}\label{eq:dangerous}
A_{n+1,i} := \left\{ \bx\in B_{n}\sep d_{\ell}a_{n+1+s\ell}u_{\bx}\bbz^{d+1}\notin K_{\beta^{\eta \ell}}\right\}
\end{equation}
if $i=(s-1)\ell$ for some integer $0<\ell<\frac{n+1}{s}$, and we define $A_{n+1,i}:=\varnothing$ otherwise, that is for $i\ge0$ such that $i\notin\left\{(s-1)\ell\sep \ell\in\Z,\;0<\ell<\frac{n+1}{s}\right\}$.

Now moving on to Step~(ii), define
\begin{equation}\label{eq:dangerous2}
\tilde A_{n+1,i} = A_{n+1,i}\setminus \bigcup_{0\le n'<n,\; i'\ge0} A_{n'+1, i'}\,,
\end{equation}
where in the case $n=0$ the union \eqref{eq:dangerous2} is empty and thus $\tilde A_{1,i} = A_{1,i}$.

Finally, moving on to Step~(iii), with reference to Lemma~\ref{lem:covering},
\begin{equation}\label{thestrategy}
\text{let $\cala_{n+1,i}$ be an efficient cover of $\tilde A_{n+1,i}$ by balls of radius $\beta^{n+1+i}r_0$.}
\end{equation}
Since $\cala_{n+1,i}$ is a cover of $\tilde A_{n+1,i}$, by Definition~\ref{def:cantorWinning}, when 
Bob makes his next move $B_{n+1}$ it must be disjoint from $\tilde A_{n'+1,i'}$ for all $i',n'\ge0$ such that $n'+i'=n$. 
Therefore, if $\bx$ is an outcome of the game, that is $\bx\in \cap_{n\ge0}B_n$, then we necessarily have that
\begin{equation}\label{win!}
\bx\not\in \bigcup_{n,i\ge0}\tilde A_{n+1,i}\,.
\end{equation}
Using a standard inclusion-exclusion argument and \eqref{eq:dangerous2}, one readily verifies that
$$
\bigcup_{n,i\ge0}\tilde A_{n+1,i}=\bigcup_{n,i\ge0}A_{n+1,i}
$$
and therefore, by \eqref{win!},
\begin{equation}\label{win!!}
\bx\not\in \bigcup_{n,i\ge0}A_{n+1,i}\,.
\end{equation}
By \eqref{eq:dangerous2}, we have that the outcome $\bx$ of the game satisfies
\[
d_{\ell}a_{n+1+s\ell}u_{\bx}\bbz^{d+1}\in K_{\beta^{\eta\ell}}
\]
for every $n\geq0$ and $\ell\ge1$. Using this for $\ell=1$ we get that
\[
a_{n}u_{\bx}\bbz^{d+1}\in d_{-1}a_{-s-1}K_{\beta^{\eta}}\subq K_{\beta^{\eta+\frac{s+1}{1+w_1}+\frac{d+1-t}{d+1}}}
\]
for all $n\ge0$. By Dani's correspondence \eqref{eq:daniCorrespondence}, this means that $\bx\in\bad(\bw)$. In order to complete the proof that $\bad(\bw)$ is Cantor winning in $\support\mu$ it is left to show that Alice's strategy is legal, that is for all $n,i\geq 0$ we have that
\begin{equation}\label{eq:bound}
\#\cala_{n+1,i} \leq \beta^{-\alpha'(i+1)}\,.
\end{equation}

The plan is to use Lemma~\ref{lem:keyLemma} in order to get a measure estimate for small neighborhoods of the sets $\tilde A_{n+1,i}$ and 
then to apply Lemma~\ref{lem:covering} to derive \eqref{eq:bound}. \zblue{The definition of $\tilde A_{n+1,i}$ is designed to ensure that 
assumptions \eqref{eq:firstAssumption} and \eqref{eq:secondAssumption} hold when they are needed and so Lemma~\ref{lem:keyLemma} is 
applicable. Note that in the case $n=0$, which serves as the basis of the inductive argument, these assumptions are not needed. 
See Figure~\ref{figure} for a more geometrical description of the definition of Alice's strategy}.

\begin{figure}\label{figure}
\includegraphics[scale=0.75]{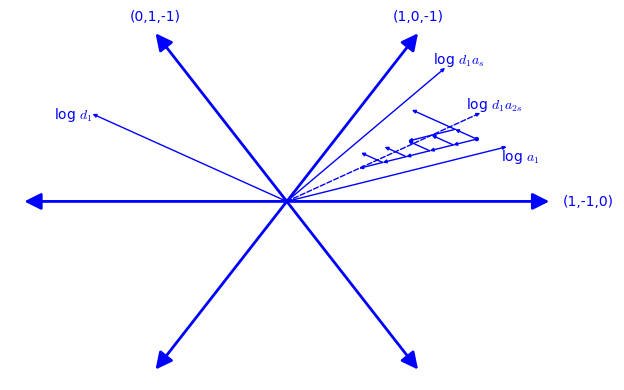}

\caption{\zblue{The plane represents all diagonal matrices in $\sltr$. Logarithm of a diagonal matrix is the vector whose coordinates are logarithms of the entries along the diagonal. Each blue point is the logarithm of a diagonal matrix $d_\ell a_{n+1+s\ell}$ for some parameters $n,\ell\geq0$. An arrow from $d_\ell a_{n+1+s\ell}$ to $d_{\ell'} a_{n'+1+s\ell'}$ is drawn if estimating the measure of $A_\eps^{n+1+s\ell,\ell,n}$ via applying Lemma~\ref{lem:keyLemma} requires an assumption regarding $d_{\ell'} a_{n'+1+s\ell'}u_{\bx_{n'}}$. Diagonal matrices in the region between the arrows which are labeled by $\log d_1 a_s$ and $\log d_1 a_{2s}$ are exactly those which satisfy $\ell\geq \frac{n+1}{s}$, and are dealt with at Alice's first turn. The parameters used to generate this figure are $\bw=(2/3,1/3)$, $s=2$, $n=5$, $\ell=1$.}}
\end{figure}

\zblue{First we deal with $A_{1,i}$.
Observe that the sets in the right hand side of \eqref{eq:extremelyDangerous} are precisely $A^{k,\ell,0}_\eps$ given by \eqref{key_sets}
with $k={n'}+1+s\ell$ and $\eps=\beta^{\eta\ell}$. Therefore,
\begin{equation}\label{vb1000}
B\left(A_{1,i},\threeo\tfrac13\beta^{i+1}\zblue{r_0}\right)=
\bigcup_{n'\geq0,\;\ell \geq \frac{n'+1}{s},\;n' + (s-1)\ell = i} B\left(A^{n'+1+s\ell,\ell,0}_{\beta^{\eta\ell}},\tfrac13\threeo\beta^{i+1}r_0\right).
\end{equation}}%
For any $i\geq0$, for any ${n'}\geq0$ and $\ell\geq\frac{{n'}+1}{s}$ such that $i = {n'} + (s-1)\ell$, apply Lemma~\ref{lem:keyLemma} with the quintuple $(h,k,\ell,m,n)$ set to be $(0,n'+1+s\ell,\ell,0,0)$, $\eps=\beta^{\eta\ell}$ and $r=\threeo\zblue{\tfrac13}\beta^{i+1}\zblue{r_0}$. In this case using \eqref{eq:rho}, \eqref{eq:s}, \zblue{the equation $b=\beta^{-1/(1+w_1)}$ and the fact that $0<\beta,r_0\le1$} gives
\begin{equation}\label{vb1002}
\eps'   = \left(1+\max\left\{\beta^{-(n'+1+(s-1)\ell)},b^{\left(1+w_{t+1}\right)(n'+1+s\ell)}\right\} \threeo\zblue{\tfrac13}\beta^{i+1}\zblue{r_0}\right)\beta^{\eta\ell} \le \threePlusOne\beta^{\eta\ell}\,.
\end{equation}
\zblue{By \eqref{eq:s} we have that $k\geq \frac{1+w_1}{w_1}\ell$ and thus Lemma~\ref{lem:keyLemma} is applicable to each set on the right of \eqref{vb1000}.} Therefore, using \zblue{\eqref{eq:result2}, \eqref{vb1000} and \eqref{eq:measureOfB0} gives}
\zblue{\begin{align}
\label{eq:third}
\mu\left(B\left(A_{1,i},\threeo\zblue{\tfrac13}\beta^{i+1}\zblue{r_0}\right)\right) & \leq \sum_{n'\geq0,\;\ell \geq \frac{{n'}+1}{s},\;{n'} + (s-1)\ell = i}C'{\threePlusOne^\gamma}\beta^{\gamma\eta\ell}\\
&{\leq \sum_{\ell\ge\frac{i+1}{2s-1}}C'\threePlusOne^\gamma \beta^{\gamma\eta\ell}
\leq \frac{C'\threePlusOne^\gamma}{1-\beta^{\gamma\eta}}\left(\beta^{\frac{\gamma\eta}{2s-1}}\right)^{i+1}}\nonumber\\
&{\leq C'\threePlusOne^{\gamma+1}\left(\beta^{\frac{\gamma\eta}{2s-1}}\right)^{i+1}}
{= C'\threePlusOne^{\gamma+1}\beta^{\frac{\gamma\eta}{2s-1}(i+1)}\beta^{\alpha n}\,,\nonumber}
\end{align}
since $n=0$, where the last inequality holds due to \eqref{eq:largeEnough2}.}

\zblue{Now let $n\ge1$, $i\ge0$ and let us assume without loss of generality that $\tilde A_{n+1,i}\neq\varnothing$ as otherwise $\cala_{n+1,i}=\varnothing$ and there is nothing to prove. In particular, we have that $i=(s-1)\ell\ge 4$ for some $1\le \ell < \frac{n+1}{s}$.
Observe that
$
A_{n+1,i} = A^{n+1+s\ell, \ell, n}_{\beta^{\eta \ell}}\,,
$
where $A^{n+1+s\ell, \ell, n}_{\beta^{\eta \ell}}$ is given by \eqref{key_sets}.
Therefore,
\begin{equation}\label{vb1001}
B\left(A_{n+1,i},\threeo\tfrac13\beta^{n+1+i}r_0\right) =
B\left(A^{n+1+s\ell,\ell,n}_{\beta^{\eta\ell}},\threeo\tfrac13\beta^{n+1+i}r_0\right).
\end{equation}
Note that $n+1+i=n+1+(s-1)\ell$. With the view to applying Lemma~\ref{lem:keyLemma} let the quintuple $(h,k,\ell,m,n)$ be $(s\ell,n+1+s\ell,\ell,\ell,n)$, $\eps={\beta^{\eta\ell}}$
and $r={\threeo\tfrac13\beta^{n+1+i}r_0=}\threeo\tfrac13\beta^{n+1+(s-1)\ell}{r_0}$. Using \eqref{eq:rho}, \eqref{eq:s}, {the equation $b=\beta^{-1/(1+w_1)}$ and the fact that $0<\beta,r_0\le1$}
in the same way as in \eqref{vb1002} we get that
\begin{align*}
\eps'  &= \left(1+\max\left\{\beta^{-(n+1+(s-1)\ell)},b^{\left(1+w_{t+1}\right)(n+1+s\ell)}\right\} \threeo{\tfrac13}\beta^{n+1+(s-1)\ell}{r_0}\right){\beta^{\eta\ell}}
  {\le} \threePlusOne{\beta^{\eta\ell}}\,.
\end{align*}
Further, since $\tilde A_{n+1,i}\neq\varnothing$, by \eqref{eq:dangerous2}, there exists a point $\tilde\bx_n\in B_n$ such that
\begin{equation}\label{vb3003}
\tilde\bx_n\not\in \bigcup_{0\le n'<n,\; i'\ge0} A_{n'+1, i'}\,.
\end{equation}
Recall that, by definition, $B_n$ is a subset of $\supp\mu$ and therefore $\tilde\bx_n\in\supp\mu$. By \eqref{vb3003},}
we have that
 \begin{align*}
     &\zblue{\tilde\bx_n\not\in} A_{1,n+(s-2)\ell}\cup A_{1, n-\ell}  & \text{ if } &  & \frac{n+1}{2s} \le  \ell < \frac{n+1}{s}\,,& \\
     &\zblue{\tilde\bx_n\not\in}  A_{1,n+(s-2)\ell} \cup A_{n+1-s\ell,(s-1)\ell}  & \text{ if } & & \frac{n+1}{3s}  \le  \ell < \frac{n+1}{2s}\,, &\\
     &\zblue{\tilde\bx_n\not\in} A_{n+1-s\ell,2(s-1)\ell}\cup A_{n+1-s\ell,(s-1)\ell}& \text{ if } & & \phantom{\frac{n+1}{3s} < {}} \ell < \frac{n+1}{3s}\,.&
 \end{align*}
A routine inspection of each of the sets above gives that
\begin{equation}\label{vb3001}
d_{2\ell}a_{n+1+s\ell}u_{\tilde\bx_n}\bbz^{d+1} 	 \in K_{\beta^{2\eta\ell}} \subq K_{\sqrt{d+1}\beta^{\frac{\ell}{d+1}}r_0^{{-1}}}\,,
\end{equation}
and
\begin{equation}\label{vb3002}
d_{\ell}a_{n+1}u_{\tilde\bx_n}\bbz^{d+1}  		 \in K_{\beta^{\eta\ell}} \subq K_{{\sqrt{2}d}\beta^{\frac{\tau}{d}}}\,,
\end{equation}
where
$$
\zblue{\textstyle
\tau=\min\left\{k-\ell-m-n-\frac{h}{1+w_1},\frac{hw_d}{1+w_1}\right\}=\min\left\{1 + (s-2)\ell - \frac{s\ell}{1+w_1},\frac{s\ell w_d}{1+w_1}\right\}>0\,.}
$$
and the containments on the right hand side follow from \eqref{eq:largeEnough1} together with \eqref{eq:eta}. Thus,  conditions \eqref{eq:firstAssumption} and \eqref{eq:secondAssumption} are satisfied. By \eqref{eq:result2} and \eqref{eq:AhlforsRegular} we have that
\zblue{
\[ \mu \left(  B\left(A^{n+1+s\ell,\ell,n}_{\beta^{\eta\ell}},\threeo\tfrac13\beta^{n+1+i}r_0\right) \right) \le C'2^\gamma \beta^{\gamma\eta \ell} \mu(B_n)\le C'2^\gamma \beta^{\gamma\eta \ell} A (\beta^nr_0)^\alpha.   \]
Then, by \eqref{vb1001} and the fact that $i=(s-1)\ell$, we get that
\begin{align}
\label{eq:forth}
\mu\left(B\left(A_{n+1, i},\tfrac{1}{3}\beta^{n+1+ i} r_0\right)\right) &\leq {C'2^\gamma r_0^\alpha A \beta^{\frac{\gamma\eta}{s-1} i}  \beta^{\alpha n}}\\
&=\nonumber
{C'2^\gamma r_0^\alpha A \beta^{\frac{\gamma\eta}{2(s-1)} (i+1)}  \beta^{\alpha n}}\,.
\end{align}
Combining \eqref{eq:third} and \eqref{eq:forth} together with Lemma~\ref{lem:covering}
applied with $r=\tfrac13\beta^{n+i+1}{r_0}$ gives that for every {$n\geq0, i \ge 4$}
\begin{align*}
\#\cala_{n+1,i}
& \leq \frac{A\mu\left(B\left(A_{n+1,i},\threeo\tfrac13\beta^{n+i+1}r_0\right)\right)}{\beta^{\alpha(n+i+1)}(\frac13r_0)^\alpha}\\[1ex]
& \le \frac{A\max\left\{C'\threePlusOne^{\gamma+1}\beta^{\frac{\gamma\eta}{2s-1}(i+1)}\beta^{\alpha n},\;
C'2^\gamma r_0^\alpha A \beta^{\frac{\gamma\eta}{2(s-1)} (i+1)}  \beta^{\alpha n}\right\}}{\beta^{\alpha(n+i+1)}(\frac13r_0)^\alpha}\\[1ex]
&\nonumber{\le A^2C'2^{\gamma+1}\left(3r_0^{-1}\right)^\alpha}\beta^{-(i+1)\left(\alpha-\frac{\gamma\eta}{2(s-1)}\right)}\\
&\nonumber{= A^2C'2^{\gamma+1}\left(3r_0^{-1}\right)^\alpha}\beta^{(i+1)\left(\frac{\gamma\eta}{4(s-1)}\right)}
\beta^{-(i+1)\left(\alpha-\frac{\gamma\eta}{4(s-1)}\right)}\\
& \leq \beta^{-\alpha'(i+1)}\,,
\end{align*}}%
where the last inequality follows from \eqref{eq:alphatag} and \eqref{eq:largeEnough2} and on using the fact that $i\ge4$.
This shows that the collection $\left\{\cala_{n+1,i}\sep n, i \in \bbn\right\}$ is a legal move for Alice.
By the argument in the beginning of this section, this completes the proof.

\begin{rem}
The diagonal matrices $d_\ell$ arise naturally in the proof of Lemma~\ref{lem:keyLemma}, even while only considering the sets $A_\eps^{k,\ell,n}$ with $\ell=0$. The general case is useful as the conditions \eqref{eq:firstAssumption} and \eqref{eq:secondAssumption} turn out to also be of the form $\bx_n\notin A_{\eps'}^{k',\ell',n'}$ for some parameters $\eps',k',\ell',n'$. Choosing $d_\ell$ as in \eqref{eq:dl} is natural, as it equally expands the $2$nd to $(t+1)$st coordinates which are all contracted by $a_k$ at the same rate. However, it is likely that $d_1$, say, can be replaced by any other unimodular diagonal matrix which expands the $2$nd to $(t+1)$st coordinates and contracts the other directions. This property is necessary, in order to ensure that an inequality similar to \eqref{eq:rem} holds. Changing the definition of $d_\ell$ in this fashion will require a different choice of the parameter $s$, which is defined in \eqref{eq:s}. In this context, it should be noted that Lemma~\ref{lem:keyLemma} is only applied with diagonal matrices of the form $d_\ell a_{n+1+s\ell} = \left(d_\ell a_{s\ell}\right) a_{n+1}$, so it makes sense to also consider the one parameter group $d_\ell a_{s\ell}$. It is interesting to note that the above described strategy of Alice is in fact winning even if $s$ is replaced by any integer larger than the one described in \eqref{eq:s}, and that $d_\ell a_{s\ell}$ becomes closer in direction to the direction of $a_{n+1}$ as $s$ becomes larger.
\end{rem}
\appendix

{\zblue

\section{Proof of Proposition~\ref{prop:modifiedHAW}}

As we mentioned earlier the proof essentially follows the argument of \cite[Proposition~4.5]{FSU4}. First of all, note that if $1/3\le \beta<1$ then, with reference to Definition~\ref{def:hyperplaneWinning}, Alice can win by default on her first move by taking $A_1$ to be the closed $\beta$-neighborhood of any hyperplane passing through the centre of $B_0$. Thus, without loss of generality we can assume throughout this proof that Bob always chooses $\beta<1/3$ when he plays the restricted hyperplane absolute game. With this additional assumption the necessity part of Proposition~\ref{prop:modifiedHAW} becomes obvious. Indeed, to win the $\modified$ hyperplane absolute game Alice simply has to follow her strategy for the hyperplane absolute game and set $\eps$ to be exactly $\beta$ on each of her moves. By increasing $\eps$ to its largest possible value Alice will only limit the possible choices for Bob's next moves.
Additionally, the modified game has a greater restriction on Bob's moves, since the radii of his balls always satisfy $r_{n+1}=\beta r_n$. Note that since $\beta<1/3$ the game does not stop at a finite step. Therefore, the outcome of the $\modified$ hyperplane absolute game will lie in $S$ and Alice will win.

To prove the sufficiency requires some work. Suppose that $S$ is $\modified$ HAW, which means that Alice has a strategy to win the $\modified$ hyperplane absolute game. Following \cite{FSU4} and indeed Schmidt \cite{schmidt1}, by a strategy one can understand a sequence of maps $\cF_{n+1}$, $n=0,1,2,\dots$ which assign a legal move $A_{n+1}$ for Alice depending on Bob's previous moves $\beta,B_0,\dots,B_n$. This strategy is winning if Alice can win when she uses it. As was demonstrated by Schmidt \cite[Theorem~7]{schmidt1}, Alice always has a positional winning strategy for every $\modified$ hyperplane absolute winning set $S$. This means that for every $\beta\in\left(0,\frac13\right)$ there exists a map $\cF_\beta$ from the set of balls in $\R^d$ into the set of Alice's legal moves for the $\modified$ hyperplane absolute game such that $\cF_{n+1}\left(\beta,B_0,\dots,B_n\right)=\cF_\beta(B_n)$ is Alice's winning strategy. That is Alice can make her move only using the knowledge of $\beta$ and Bob's previous move $B_n$. From now on we fix a positional winning strategy for the restricted hyperplane absolute game, which we will use to define a winning strategy for the hyperplane absolute game.
Also, note that since $S$ is $\modified$ HAW, it has to be dense in $\R^d$; otherwise Bob can win the $\modified$ hyperplane absolute game by taking $B_0$ to contain no points of $S$.

Given any $0<\beta<1/3$, define the following map on balls $B(\bx,r)$ in $\R^d$ into hyperplane neighborhoods as follows: first find the unique integer $m_r\in\Z$ satisfying
\begin{equation}\label{vb+02}
(\beta/2)^{2m_r+1}\le r< (\beta/2)^{2m_r-1}\,;
\end{equation}
then write
\begin{equation}\label{vb+03}
\cF_{(\beta/2)^2}\left(B\left(\bx,(\beta/2)^{2m_r}\right)\right)=B\left(H,(\beta/2)^{2m_r+2}\right)
\end{equation}
where $H$ is a hyperplane in $\R^d$; and finally define
\begin{equation}\label{vb+01}
\cG_{\beta}(B(\bx,r)):=B\left(H,2(\beta/2)^{2m_r+2}\right).
\end{equation}
By the left hand side of \eqref{vb+02}, we have that
$$
2(\beta/2)^{2m_r+2}\le \beta r
$$
and therefore $\cG_{\beta}(B(\bx,r))$ represents a legal move of Alice for the hyperplane absolute game.
We claim that the map $\cG_{\beta}$ gives a positional winning strategy for Alice in the hyperplane absolute game.

Indeed, suppose that Bob chooses any $0<\beta<1/3$ on his first move and suppose that
$B_n=B(x_n,r_n)$ ($n=0,1,2,\dots$) and $A_1,A_2,\dots$ are the moves taken by Bob and Alice respectively in the hyperplane absolute game such that
\begin{equation}\label{vb+05}
A_{n+1}=\cG_{\beta}(B(\bx_n,r_n))=B\left(H_n,2(\beta/2)^{2m_{r_n}+2}\right)
\end{equation}
for all $n\ge0$, where $H_n$ is a hyperplane in $\R^d$ and $m_{r_n}$ arises from \eqref{vb+02} when $r=r_n$. Without loss of generality we can assume that $r_n\to0$ as $n\to\infty$ as otherwise Alice wins because $S$ is dense.

Now we extract a subsequence of $B_n$, say $B_{n_k}$ such that the radii $r_{n_k}$ are comparable to $(\beta/2)^{2k}$. To this end, first define $k_0\in\Z$ such that $(\beta/2)^{2k_0}<r_1$. Next, for each $k\ge k_0$ define $n_k$ such that
\begin{equation}\label{vb+04}
(\beta/2)^{ 2k+1}=\beta\,\tfrac12(\beta/2)^{2k} \le r_{n_k} < \tfrac12 (\beta/2)^{2k}\,.
\end{equation}
The existence of $n_k$ follows from the properties of the hyperplane absolute game, namely, the fact that $r_{n+1}/r_n\ge \beta $ for all $n\ge0$. Furthermore, observe that $n_k$ is increasing.
Also, by \eqref{vb+02},
\begin{equation}\label{vb+06}
m_{r_{n_k}}=k\,.
\end{equation}
Now define
$$
\tilde B_k=B\left(\bx_{n_k},(\beta/2)^{2k}\right)\qquad\text{and}\qquad\tilde A_{k+1}=\cF_{(\beta/2)^2}\left(\tilde B_k\right)=B\left(\tilde H_{k}, (\beta/2)^{2k+2}\right)
$$
for $k\ge k_0$, where $\tilde H_{k}$ are hyperplanes.

\noindent\textbf{Claim:}
$\tilde B_{k_0}$, $\tilde A_{k_0+1}$, $\tilde B_{k_0+1}$, $\tilde A_{k_0+2}$, \dots
is a sequence of legal moves taken by Bob and Alice in the $\modified$ hyperplane absolute game when Bob chooses $(\beta/2)^2$ as the parameter of the game.

The legality of Alice's moves $\tilde A_{k+1}$ is obvious from Definition~\ref{def:hyperplaneWinning}. To determine the legality of Bob's moves we have to verify that
\begin{equation}\label{vb+07}
\tilde B_{k+1}\subq \tilde B_{k}\setminus \tilde A_{k+1}
\end{equation}
for all $k\ge k_0$. First note that, since $B_n=B(x_n,r_n)$ for all $n=0,1,2,\dots$, we have that $\bx_{n_{k+1}}\in B_{n_{k+1}}$. Further, since $n_{k+1}\ge n_k+1$ and $B_n$ and $A_{n+1}$ for $n=0,1,2,\dots$ are the legal moves of Bob and Alice for the hyperplane absolute game, we have that
$$
B_{n_{k+1}}\subq B_{n_k+1}\subq B_{n_k}\setminus A_{n_{k}+1}\,.
$$
Therefore,
$$
\bx_{n_{k+1}}\in B_{n_k}\setminus A_{n_{k}+1}\,.
$$
Using the definitions of $B_{n_k}$, \eqref{vb+05}, \eqref{vb+04} and \eqref{vb+06}, the above implies that
\begin{equation}\label{vb+08}
\dist\left(\bx_{n_{k+1}},\bx_{n_{k}}\right)\le r_{n_k}<\tfrac12(\beta/2)^{2k}
\end{equation}
and
$$
\dist\left(\bx_{n_{k+1}},H_{n_{k}}\right)> 2(\beta/2)^{2m_{r_{n_k}}+2}=2(\beta/2)^{2k+2}\,.
$$
where $\dist(\,\cdot\,)$ is the Euclidean distance on $\R^d$.
Observe using \eqref{vb+03}, \eqref{vb+01} and \eqref{vb+05} that $H_{n_k}=\tilde H_k$. Then,
\begin{equation}\label{vb+09}
\dist\left(\bx_{n_{k+1}},\tilde H_{k}\right)> 2(\beta/2)^{2k+2}\,.
\end{equation}
Using the triangle inequality together with \eqref{vb+08} and \eqref{vb+09} gives
$$
B\left(\bx_{n_{k+1}},(\beta/2)^{2k+2}\right)\subq B\left(\bx_{n_{k}},(\beta/2)^{2k}\right)\setminus B\left(\tilde H_{k},(\beta/2)^{2k+2}\right),
$$
which is precisely \eqref{vb+07}. This completes the proof of the above claim.

Finally, observe that $\cap_nB_n$ is the same as $\cap_k\tilde B_k$ which has to be in $S$ by the above claim and the fact that Alice plays according to her winning strategy. Therefore, Alice wins the hyperplane absolute game, meaning that $S$ is HAW. The proof of Proposition~\ref{prop:modifiedHAW} is thus complete.
}

\begin{rem}
At no point in the above proof have we used the fact that $H$ and $H_n$ are hyperplanes. These could have been sets from any collection. Of course, on replacing hyperplanes with another collection of sets we would alter the hyperplane absolute game and the restricted hyperplane absolute game and the corresponding notions of winning. However, the fact that the nature of
$H$ and $H_n$ is irrelevant to the above proof means that the altered notions of winning sets and restricted winning sets are the same. In particular, this comment applies to the $k$-dimensional absolute game, which is a more general version of the hyperplane absolute game, studied in \cite{BFKRW}.
\end{rem}

\vspace{8 mm}
\emph{Acknowledgements}: EN would like to thank Elon Lindenstrauss and Barak Weiss for
indispensable discussions about $\badw$ and winning during the last decade, and to David Simmons for telling him about Proposition~\ref{prop:known}. EN acknowledges support from ERC 2020 grant
HomDyn (grant no. 833423) and ERC 2020 grant HD-App (grant no. 754475). EN would like to thank Anna Nesharim for joining him to the valleys and peaks. LY is supported in part by NSFC grant 11801384 and the Fundamental Research Funds for the Central Universities YJ201769. The authors are also very grateful to the anonymous reviewer for very helpful comments.

\bibliographystyle{alpha}
\bibliography{bibliography}

\end{document}